\newtheorem{lemma}{Lemma}
\newtheorem{definition}{Definition}
\newtheorem{theorem}{Theorem}
\newcommand{\ARXIV}{\texttt{arXiv}}
\title[Convergence Rates in Stochastic Homogenization]{Influence of Lower-Order Terms on the Convergence Rates in Stochastic Homogenization of Elliptic Equations}
\author{Man Yang}
\address{Kyushu University, Japan.}
\email{yang.man.365@s.kyushu-u.ac.jp}
\begin{document}

\begin{abstract}
In this study, we investigate the convergence rates for the homogenization of elliptic equations with lower-order terms, under the spectral gap assumption, in both bounded domains and the entire space. Our analysis demonstrates that lower-order terms significantly affect the convergence rate, particularly in the full space where the rate changes from \(O(\epsilon)\) (under the spectral gap condition without lower-order terms, see \cite[Theorem 2]{Fischer2021}) to \(O(\epsilon^{d/({d+2})})\) due to their influence.  In contrast, in bounded domains, the convergence rate remains \(O(\epsilon^{1/2})\), as the boundary conditions exert a stronger influence than the lower-order terms. To manage the complexities introduced by lower-order terms, we developed a novel technique that localizes the analysis within small grids, enabling the application of the Poincaré inequality for effective estimates. This work advances the framework in \cite{Fischer2021}, offering a refined approach to quantitative homogenization with lower-order terms.
\end{abstract}

\keywords{stochastic homogenization; lower-order terms; convergence rate; elliptic equations}

\maketitle





\section{Introduction }\label{sec 1}

In many applications, systems governed by partial differential equations (PDEs) have heterogeneous coefficients, where small-scale variations impact large-scale behavior. Homogenization theory approximates these systems with simpler "homogenized" equations reflecting their overall behavior. Stochastic homogenization specifically addresses cases with random coefficients. Qualitative homogenization establishes the existence of this limit as \( \epsilon \to 0 \), without specifying the convergence rate. In contrast, quantitative homogenization provides precise estimates on rates and error bounds, essential for accurately understanding the effects of randomness and fine-scale variations on large-scale behavior. 

This work focuses on the quantitative stochastic homogenization of elliptic equations with lower-order terms, aiming to assess their impact on convergence rates. Lower-order terms are common in practical models but add complexity, influencing both convergence behavior and the homogenized structure. Most quantitative studies have focused on models without lower-order terms. However, understanding the quantitative effects of lower-order terms in stochastic settings remains an important open question. We consider elliptic PDEs of the form:
\[
-\nabla\cdot(\mathbf{a}^\epsilon(x)\nabla u^\epsilon(x))+\mathbf b^\epsilon (x) \nabla u^\epsilon(x)+\Lambda u^\epsilon(x) = f(x) \quad \text{in } \mathbb{R}^d,
\]
where \(\mathbf{a}^\epsilon(x)\) and \(\mathbf b^\epsilon(x)\) are random fields with correlations decaying quickly on scales larger than the microscopic scale \(\epsilon\). Here, \(\Lambda\) is a constant that ensures control over the influence of the lower-order term \(\mathbf b^\epsilon (x)\), particularly necessary for elliptic equations. Our goal is to extend the qualitative results from work \cite{wodene}, which established solution convergence, to a quantitative analysis that precisely determines the rates of convergence. The homogenized problem is given by:
\[
-\nabla\cdot(\mathbf{\bar{a}}\nabla  u_{0}(x))+\mathbf{\bar{b}} \nabla  u_{0}(x)+\Lambda u_0(x) = f(x) \quad \text{in } \mathbb{R}^d,
\]
where \(\mathbf{\bar{a}}\) and \(\mathbf{\bar{b}}\) represent homogenized coefficients. 

We prove that for \(d \geq 3\), the convergence rate is \(O\left(\epsilon^{d/(d+2)}\right)\), while in bounded domains, it remains \(O(\epsilon^{1/2})\), with boundary conditions having a greater influence than the lower-order terms. Moreover, our study introduces a novel localization technique within the stochastic framework, enabling refined error estimates in quantitative homogenization. This approach addresses a significant gap in existing homogenization theory, providing a robust method to assess the error impact of lower-order terms in models with random coefficients and broadening the applicability of homogenization to a wider range of PDEs with practical relevance.

In qualitative homogenization, \cite{modica1986nonlinear} and \cite{dal1986nonlinear} used average integration for the convenience of applying the subadditive ergodic theorem \cite{akcoglu1981ergodic} to achieve convergence. For the cases with lower order terms, the work on parabolic equations \cite{wodene} established that the solution \(u_\epsilon\) converges to \(u_0\). While qualitative homogenization results are typically obtained under the conditions of stationarity and ergodicity, ensuring statistical uniformity of the medium's properties over large scales, obtaining quantitative results, particularly convergence rates, requires stronger conditions. Recent advancements in quantitative homogenization have provided precise estimates of convergence rates and error bounds. For instance, works such as \cite{armstrong2018quantitative} and \cite{armstrong2019quantitative} have established quantitative results under the unit range of dependence assumption, while \cite{GloriaAntoine0} developed new quantitative methods based on spectral gap estimates with respect to Glauber dynamics on coefficient fields. 
 
Lower-order terms in practical problems add complexity to quantitative homogenization. In periodic settings, convergence rates for systems with lower-order terms can be directly obtained, as shown in \cite{wang2022homogenization} and \cite{jikov2012homogenization}. By contrast, for stochastic cases, where coefficients are random, further investigation is required to establish the convergence rates. To address this gap, we extend the framework introduced in \cite{Fischer2021} to consider elliptic equations with lower-order terms under stochastic conditions. 

This paper is organized as follows: In Section \ref{sec 1}, we introduce the motivation and objectives of our study, review relevant literature, and present essential notations, setups, and assumptions. This section also states the main theorems. Section \ref{sec 2} provides preliminary results for the analysis. Section \ref{sec 3} presents the main estimates, where we derive quantitative results crucial for proving the main theorems. Section \ref{sec 4} contains the detailed proofs of the main theorems, building upon the estimates from Section \ref{sec 3}. Finally, Section \ref{sec 5} provides examples of cases that satisfy the spectral gap condition, offering insight into understanding scenarios where this condition holds.

\subsection{Notations}
Let \( d \in \mathbb{N} \) denote the spatial dimension. Throughout this work, the letter \(C\) denotes positive constants that may vary from line to line. These constants \(C\) depend only on \(d\), \(\lambda\), \(K\), and \(\Lambda\), which are specified in Assumptions \textbf{A1}–\textbf{A5}. Importantly, \(C\) does not depend on other variables like \(\epsilon\) or quantities that vary with \(\epsilon\). \( A \lesssim B \) means that \( A \) is less than or equal to \( B \) up to a constant multiple, i.e., \( A \leq C B \) for some constant \( C > 0 \). 

For a function \(f: \mathbb{R}^{d} \to \mathbb{R}\), its partial derivative with respect to \(x_i\) is denoted by \(\partial_{i}f\), and its gradient is given by \(\nabla f := (\partial_{1}f, \partial_{2}f, \dots, \partial_{d}f)^T\). If \(\mathbf{f}: \mathbb{R}^{d} \to \mathbb{R}^{d}\) is a vector-valued function with \(\mathbf{f} = (f_{1}, \dots, f_{d})\), the divergence of \(\mathbf{f}\) is given by \(\nabla \cdot \mathbf{f} = \sum_{i=1}^{d}\partial_{i}f_{i}\). Throughout this paper, we adopt the Einstein summation convention, where repeated indices imply summation. For example, the expression \(f_i g_i\) denotes \(\sum_{i=1}^{d} f_i g_i\).
For a measurable set \(U \subset \mathbb{R}^d\) with positive measure, the average integral of a function \(f\) over \(U\) is defined as:
\[
\fint_U f(x) \, \mathrm{d}x = \frac{1}{|U|} \int_U f(x) \, \mathrm{d}x,
\]
where \(|U|\) is the Lebesgue measure of the set \(U\). We consider an open ball \(B_{R}(x_0) := \{x \in \mathbb{R}^{d} \mid |x - x_0| < R\}\), where \(R > 0\) and \(x_0 \in \mathbb{R}^d\). For simplicity, we write \(B_R := B_R(0)\) when the ball is centered at the origin. 
Let \(\mathbb{P}\) denote a probability measure on a probability space \((\Omega, \mathcal{F}, \mathbb{P})\), and let \(\mathbb{E}[X]\) denote the expectation of a measurable random variable \(X\) with respect to \(\mathbb{P}\).

\(\mathbb{R}_\text{skew}^{d \times d}\) denotes the space of \(d \times d\) skew-symmetric matrices, i.e., matrices \(A = (a_{ij})\) satisfying \(a_{ij} = -a_{ji}\) for all \(1 \leq i, j \leq d\). For \(k \in [1, \infty]\), let \(W^{1,k}(U)\) denote the Sobolev space. When \(k = 2\), we write \(H^1(U) := W^{1,2}(U)\). \(W_{0}^{1,k}(U)\) denotes the closure of \(C_{c}^{\infty}(U)\) in \(W^{1,k}(U)\).

\subsection{Setting and Assumptions}
Let \( H \) denote a Hilbert space. Fix constant $\lambda > 1$. Consider the measurable mappings $\mathbf{a}(\omega, x): H\times\mathbb{R}^d \to A$, where $A$ denotes the set of $d$-by-$d$ symmetric matrices in $\mathbb{R}^{d \times d}$. The mappings $\mathbf{a}(\omega, x)$ satisfy the following conditions:
\begin{itemize}
   \item[\textbf{A1}.] \textbf{Uniform Ellipticity and Boundedness}: For all $x \in \mathbb{R}^d$ and all $\omega \in H$, we have
	\begin{equation*}
 \forall \xi \in \mathbb{R}^d,\ 
		|\xi|^2 \leq \xi\cdot \mathbf{a}(\omega, x)\xi \leq \lambda|\xi|^2.
	\end{equation*}

    \item[\textbf{A2}.] \textbf{Lipschitz Continuity}: For all $x_1, x_2 \in \mathbb{R}^d$ and all $\omega \in H$,
   \begin{equation*}
       |\mathbf{a}(\omega, x_1) - \mathbf{a}(\omega, x_2)| \leq \lambda |x_1 - x_2|.
   \end{equation*}
  
   \item[\textbf{A3}.] \textbf{Differentiability}: \(\mathbf{a}(\omega, x)\) is continuously differentiable with respect to the parameter \(\omega\), and for every \(\omega \in H\) and all \(x \in \mathbb{R}^d\), 
\begin{equation*}
    |\partial_\omega \mathbf{a}(\omega, x)| \leq \lambda.
\end{equation*}
Here, $\partial_\omega$ denotes the Fréchet derivative with respect to $\omega$.

\end{itemize}
Similarly, let $\mathbf{b}(\omega, x)$ denote mappings from $H\times\mathbb{R}^d$ to the set of $1$-by-$d$ vectors, which satisfy the following conditions:
\begin{itemize}
    \item[\textbf{A4}.] \textbf{Boundedness}: There exists a constant \( K>0 \) such that for every \( \omega \in H \),
    \begin{equation*}
        \Vert \mathbf{b}(\omega, x)\Vert_{L^{\infty}(\mathbb{R}^d)} \leq K.
    \end{equation*}
    Additionally, we assume that \( \Lambda \in [K^2 + 1, \infty) \).

    \item[\textbf{A5}.] \textbf{Differentiability}: \(\mathbf{b}(\omega, x)\) is continuously differentiable with respect to the parameter \(\omega\), and for every \(\omega \in H\) and all \(x \in \mathbb{R}^d\),
\begin{equation*}
    |\partial_\omega \mathbf{b}(\omega, x)| \leq \lambda,
\end{equation*}
\end{itemize}

Following the setup in \cite{Fischer2021}, let \( B^H_1 \) denote an open unit ball in \( H \). We refer to a measurable function \( \tilde{\omega} : \mathbb{R}^d \rightarrow B^H_1 \) as a parameter field. We denote by \( \Omega \) the space of all parameter fields equipped with the \( L^1_{\text{loc}}(\mathbb{R}^d; H) \) topology. A probability measure \(\mathbb{P}\) is assigned to the space \(\Omega\), and a random parameter field sampled from \(\mathbb{P}\) is denoted by \(\omega_{\epsilon} : \mathbb{R}^d \rightarrow B^H_1\). We make assumptions:
 \begin{itemize}
		\item[\textbf{P1}.]  \textbf{Stationarity}: For all $ y\in \mathbb{R}^d,$ $\omega_\epsilon(\cdot+y)$ coincides with the probability distribution of $\omega_\epsilon(\cdot)$.
		
         \item[\textbf{P2}.] \textbf{Fast Decorrelation}: \( \mathbb{P} \) exhibits fast decorrelation on scales larger than \( \epsilon\), as characterized by the spectral gap condition provided in Definition \ref{def:spectral-gap}. Here, \(\epsilon\) is fixed in \( (0 , 1] \).
\end{itemize}

\begin{definition}[Spectral Gap Inequality] \label{def:spectral-gap}
\cite[Definition 16]{Fischer2021} A probability measure \( \mathbb{P} \) of random fields \( \omega_\epsilon \) satisfies a spectral gap inequality with correlation length \(\epsilon \in (0, 1]\) and constant \(\rho > 0\) if for any random variable \( F = F(\omega_\epsilon) \), the following estimate holds:
\[
\mathbb{E}[|F - \mathbb{E}[F]|^2] \leq \frac{\epsilon^d}{\rho^2} \mathbb{E}\left[\int_{\mathbb{R}^d}\left(\fint_{B_\epsilon(x)} \left|\frac{\partial F}{\partial \omega_\epsilon} \right| \, \mathrm{d}\tilde{x} \right)^2 \mathrm{d}x \right],
\]
where \(\fint_{B_\epsilon(x)}\left|\frac{\partial F}{\partial \omega_\epsilon}\right|\mathrm{d}\tilde{x}\) stands short for
\begin{equation}\label{frech}
      \sup_{\delta \omega_\epsilon} \limsup_{t \to 0}  \frac{|F(\omega_\epsilon+t\delta\omega_\epsilon)-F(\omega_\epsilon)|}{t},
  \end{equation}
and the \(\mathrm{sup}\) runs over all random fields \(\delta\omega_\epsilon: \mathbb{R}^d \to H\) supported in \(B_\epsilon(x)\) with \(\Vert \delta\omega_\epsilon\Vert_{L^\infty(\mathbb{R}^d)} \leq 1\).
\end{definition}

Homogenization studies often use the two-scale expansion method, a crucial technique that informs the definition of the first-order corrector, as detailed in the work of A. Bensoussan, J.-L. Lions, and G. Papanicolaou \cite{bensoussan2011asymptotic}. Our study incorporates lower-order terms, necessitating the computation of expansions for equations with such terms. For simplicity, throughout this work, we denote \(\mathbf{a}^\epsilon := \mathbf{a}(\omega_\epsilon(x), \cdot)\) and \(\mathbf{b}^\epsilon := \mathbf{b}(\omega_\epsilon(x), \cdot)\).

\begin{definition}[First-Order Corrector]\label{first order}
Let \(e_{i}\) denote the \(i\)th standard basis vector of \(\mathbb{R}^d\), and let \(\mathbf{a}^\epsilon\) be the coefficient matrix satisfying assumptions \textbf{A1}-\textbf{A3}, \textbf{P1} and \textbf{P2}. We define the first-order corrector \(\phi_{e_{i}}\) as the unique distributional solution of the corrector problem:
\begin{equation}\label{corrector}
	\nabla \cdot \left(\mathbf{a}^\epsilon \left(e_{i} + \nabla \phi_{e_{i}}\right)\right) = 0 \quad \text{in } \mathbb{R}^d,
\end{equation}
subject to the following conditions:
\begin{itemize}
		\item[1.]  \(\phi_{e_{i}}\) has the regularity \(\phi_{e_{i}} \in H^1_{loc}(\mathbb{R}^d)\) and satisfies \(\fint_{B_1} \phi_{e_{i}}(\omega_\epsilon, \cdot) \, \mathrm{d}x = 0\) almost surely with respect to \(\mathbb{P}\).
  
		\item[2.]  The gradient of the corrector \(\nabla \phi_{e_{i}}\) is stationary in the sense that, for \(\mathbb{P}\)-almost surely and for all \(y \in \mathbb{R}^d\),
  \begin{equation*}
      \nabla \phi_{e_{i}}(\omega_\epsilon, \cdot+y) = \nabla \phi_{e_{i}}(\omega_\epsilon(\cdot+y), \cdot) \quad \text{almost everywhere in } \mathbb{R}^d.
  \end{equation*}
  \item[3.]  The gradient of the corrector \(\nabla \phi_{e_{i}}\) satisfies
  \begin{equation}\label{corrector 3}
      \mathbb{E}\left[ \nabla \phi_{e_{i}}\right] = 0, \quad \text{and} \quad \mathbb{E}\left[ |\nabla \phi_{e_{i}}|^2 \right] < \infty.
  \end{equation}
		\item[4.]  The corrector \(\phi_{e_{i}}\) is sublinear, meaning that \(\mathbb{P}\)-almost surely,
  \begin{equation*}
      \lim_{R \to \infty}\frac{1}{R^2}\fint_{B_R}|\phi_{e_{i}}(\omega_\epsilon, x)|^2 \, \mathrm{d}x = 0.
  \end{equation*}
	\end{itemize}	 
\end{definition}
In our setting, the first-order corrector does not depend on the lower-order terms. However, it is important to note that the definition of the first-order corrector may vary depending on the form of the rescaled lower-order terms. For a comprehensive explanation of the specific calculations, we refer to \cite{wodene}. From first-order corrector \(\phi_{e_i}\) defined in Definition \ref{first order}, we define the homogenized coefficients as follows.
\begin{definition}[Homogenized Coefficients]\label{homogenized a}
	  Let $\mathbf{a}^\epsilon$ and $\mathbf{b}^\epsilon$ be coefficients that satisfy the assumptions \textbf{A1}-\textbf{A5}, \textbf{P1} and \textbf{P2}. We denote the homogenized coefficients by \(\bar{\mathbf{a}} \in \mathbb{R}^{d\times d}\) and \(\bar{\mathbf{b}} \in \mathbb{R}^{1\times d}\), which satisfy the following relations:
	\begin{equation*}
		\left\{
		\begin{aligned}
			&	\bar{\mathbf{a}}e_{i} := \mathbb{E}\left[ \mathbf{a}^\epsilon(e_{i} + \nabla \phi_{e_{i}})\right],	\\
			&	\bar{\mathbf{b}}e_{i} := \mathbb{E}\left[ \mathbf{b}^\epsilon(e_{i} + \nabla \phi_{e_{i}})\right].
		\end{aligned}
		\right.
	\end{equation*}
\end{definition}

\subsection{Main Theorems}
\begin{theorem}\label{R space}
Let $d\geq 3$. Let $\Lambda$, $\mathbf{a}^\epsilon$ and $\mathbf{b}^\epsilon$ be coefficients that satisfy the assumptions \textbf{A1}-\textbf{A5}, \textbf{P1} and \textbf{P2}. Let $\bar{\mathbf{a}}$ and $\bar{\mathbf{b}} $ be as in Definition \ref{homogenized a}.
Let $  u_{0} \in H^2(\mathbb{R}^d; \mathbb{R})\cap W^{1,\infty}(\mathbb{R}^d; \mathbb{R}) $ and $(u^\epsilon)_{\epsilon>0}\in H^1(\mathbb{R}^d; \mathbb{R})$ to be the unique weak solution of
	\begin{equation}\label{q1}
			-\nabla\cdot(\mathbf{a}^\epsilon\nabla u^\epsilon)+\mathbf b^\epsilon \nabla u^\epsilon+\Lambda u^\epsilon=-\nabla\cdot(\mathbf{\bar{a}}\nabla  u_{0})+\mathbf{\bar{b}} \nabla  u_{0}+\Lambda u_0\ \ \mathrm{in}\ \mathbb{R}^d
	\end{equation}
 in a distributional sense.
Then we have
 \begin{equation}\label{quali result}
 \|u^\epsilon - u_0\|_{L^{2d/(d-2)}(\mathbb{R}^d)} \lesssim \mathcal{C}(\omega_\epsilon) \epsilon^{\frac{d}{d+2}} \|\nabla u_0\|_{H^1(\mathbb{R}^d)},
	\end{equation}
       where \(\mathcal{C}(\omega_\epsilon)\) is a random constant such that there exists a constant \(C(d, \lambda, K, \Lambda) > 0\) satisfying \( \mathbb{E}[\mathcal{C}(\omega_\epsilon)] \leq C \).
\end{theorem}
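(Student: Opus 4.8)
The plan is to run a quantitative two-scale expansion and to close the estimate by an energy argument, the coercivity being supplied by the condition $\Lambda\ge K^2+1$; the only genuinely new difficulty is the homogenization error produced by the lower-order term $\mathbf b^\epsilon\nabla u^\epsilon$, and it is there that the grid/Poincaré localization announced in the introduction is used. Set
\[
  w^\epsilon := u_0 + \phi_{e_i}\,\partial_i u_0 ,\qquad \psi := u^\epsilon - w^\epsilon ,
\]
with $\phi_{e_i}$ the first-order corrector of Definition \ref{first order}, for which (since $d\ge 3$) the corrector bounds $\mathbb E\big[|\phi_{e_i}(x)|^2\big]\lesssim\epsilon^2$ uniformly in $x$, $\mathbb E\big[|\nabla\phi_{e_i}|^2\big]\lesssim 1$, and the higher moment bounds $\mathbb E\big[|\phi_{e_i}(x)|^{q}\big]\lesssim_q\epsilon^{q}$ are available under the spectral gap assumption, together with the associated skew-symmetric flux corrector $\sigma_i$ satisfying $\nabla\cdot\sigma_i=\mathbf a^\epsilon(e_i+\nabla\phi_{e_i})-\bar{\mathbf a}e_i$ and $\mathbb E\big[|\sigma_i(x)|^2\big]\lesssim\epsilon^2$ (these are precisely the ingredients of \cite{Fischer2021}). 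Using the corrector equation \eqref{corrector} and $\nabla u_0=\partial_i u_0\,e_i$, one computes that $\psi\in H^1(\mathbb R^d)$ solves, distributionally,
\begin{align*}
  -\nabla\cdot(\mathbf a^\epsilon\nabla\psi)+\mathbf b^\epsilon\nabla\psi+\Lambda\psi
  &= \nabla\partial_i u_0\cdot\big(\mathbf a^\epsilon(e_i+\nabla\phi_{e_i})-\bar{\mathbf a}e_i\big)
   + \nabla\cdot\big(\phi_{e_i}\,\mathbf a^\epsilon\nabla\partial_i u_0\big) \\
  &\quad - \partial_i u_0\,g_i
   - \phi_{e_i}\,\mathbf b^\epsilon\nabla\partial_i u_0 - \Lambda\,\phi_{e_i}\partial_i u_0 ,
\end{align*}
where $g_i := \mathbf b^\epsilon(e_i+\nabla\phi_{e_i})-\bar{\mathbf b}e_i$. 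Testing with $\psi$ and invoking coercivity gives $\|\psi\|_{H^1(\mathbb R^d)}^2\lesssim\big|\langle\text{RHS},\psi\rangle\big|$; once $\|\psi\|_{H^1}$ is bounded, the theorem follows from the Sobolev inequality $\|v\|_{L^{2d/(d-2)}(\mathbb R^d)}\lesssim\|\nabla v\|_{L^2(\mathbb R^d)}$ and $\|u^\epsilon-u_0\|_{L^{2d/(d-2)}}\le\|\psi\|_{L^{2d/(d-2)}}+\|\phi_{e_i}\partial_i u_0\|_{L^{2d/(d-2)}}$, the last summand being $\lesssim\epsilon\,\mathcal C(\omega_\epsilon)\,\|\nabla u_0\|_{H^1}$ by the corrector moment bound and the embedding $\nabla u_0\in H^1\hookrightarrow L^{2d/(d-2)}$.

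All terms on the right-hand side except $-\partial_i u_0\,g_i$ are harmless. The two corrector remainder terms $\phi_{e_i}\mathbf b^\epsilon\nabla\partial_i u_0$ and $\Lambda\phi_{e_i}\partial_i u_0$ contribute $\lesssim\epsilon\,\mathcal C(\omega_\epsilon)\,\|\nabla u_0\|_{H^1}\|\psi\|_{H^1}$ because $\phi_{e_i}=O(\epsilon)$ in mean square and $\nabla u_0,\nabla^2 u_0\in L^2$. For the flux error $\nabla\partial_i u_0\cdot\big(\mathbf a^\epsilon(e_i+\nabla\phi_{e_i})-\bar{\mathbf a}e_i\big)=\nabla\partial_i u_0\cdot(\nabla\cdot\sigma_i)$, the skew-symmetry of $\sigma_i$ lets one rewrite its pairing with $\psi$ as $-\int\sigma_{i,jk}\,\partial_k\partial_i u_0\,\partial_j\psi$, whence a contribution $\lesssim\epsilon\,\mathcal C(\omega_\epsilon)\,\|\nabla^2 u_0\|_{L^2}\|\nabla\psi\|_{L^2}$. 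By contrast $g_i$ is mean-zero but not divergence-free, so no flux corrector is available and a naive estimate only gives $O(1)$; this is the term that forces a lower rate.

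For the lower-order term, fix a length $\ell\in(\epsilon,1]$ to be optimized, partition $\mathbb R^d$ into cubes $(Q_k)$ of side $\ell$, and split $g_i=G_i+R_i$ with $G_i:=\sum_k\big(\fint_{Q_k}g_i\big)\mathbf 1_{Q_k}$ the piecewise-constant block average and $R_i:=g_i-G_i$ of vanishing mean on each $Q_k$. For $G_i$, the spectral gap inequality applied to $F_k:=\fint_{Q_k}g_i$ yields $\mathbb E[|F_k|^2]\lesssim\epsilon^d/\ell^d$ uniformly in $k$ (up to a logarithm): the sensitivity of $F_k$ to an $\epsilon$-local perturbation of $\omega_\epsilon$ at $z$ is controlled, through the corrector equation, by the gradient of the elliptic Green function, which decays like $\epsilon^d/(|z-Q_k|+\epsilon)^d$, and the square-sum of this over the $\epsilon$-cells is $\lesssim\epsilon^d/\ell^d$. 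Hence $\mathbb E\big[\|\partial_i u_0\,G_i\|_{L^2(\mathbb R^d)}^2\big]=\int|\partial_i u_0|^2\,\mathbb E[|G_i|^2]\lesssim(\epsilon^d/\ell^d)\|\nabla u_0\|_{L^2}^2$, so $|\langle\partial_i u_0\,G_i,\psi\rangle|\lesssim\mathcal C(\omega_\epsilon)\,(\epsilon^{d/2}/\ell^{d/2})\,\|\nabla u_0\|_{L^2}\|\psi\|_{L^2}$. For $R_i$, solve on each cube the Neumann problem $-\Delta\varphi_i^k=R_i$ in $Q_k$, $\partial_\nu\varphi_i^k=0$ on $\partial Q_k$ (solvable since $\fint_{Q_k}R_i=0$), so that $R_i=\nabla\cdot h_i^k$ with $h_i^k:=-\nabla\varphi_i^k$, $h_i^k\cdot\nu=0$ on $\partial Q_k$, and by the elliptic estimate with Poincaré constant comparable to $\ell$, $\|h_i^k\|_{L^2(Q_k)}\lesssim\ell\,\|g_i\|_{L^2(Q_k)}$; integrating by parts cube by cube, replacing $\partial_i u_0$ by its cube mean plus a Poincaré remainder, and summing with Cauchy--Schwarz (using $\mathbb E\|g_i\|_{L^2(Q_k)}^2\lesssim\ell^d$ and $\sum_k\int_{Q_k}|\overline{\nabla u_0}^{\,Q_k}|^2\le\|\nabla u_0\|_{L^2}^2$ to absorb the non-decay of $g_i$) gives $|\langle\partial_i u_0\,R_i,\psi\rangle|\lesssim\mathcal C(\omega_\epsilon)\,\ell\,\|\nabla u_0\|_{H^1}\|\psi\|_{H^1}$. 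Collecting all bounds in the energy estimate and absorbing $\|\psi\|_{H^1}$,
\[
  \|\psi\|_{H^1(\mathbb R^d)}\lesssim\mathcal C(\omega_\epsilon)\,\Big(\epsilon+\tfrac{\epsilon^{d/2}}{\ell^{d/2}}+\ell\Big)\,\|\nabla u_0\|_{H^1(\mathbb R^d)} ,
\]
and optimizing, i.e. balancing $\epsilon^{d/2}/\ell^{d/2}\sim\ell$, selects $\ell\sim\epsilon^{d/(d+2)}$ and produces the rate $\epsilon^{d/(d+2)}$, which dominates $\epsilon$ because $d/(d+2)<1$.

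The heart of the argument is the spectral-gap estimate $\mathbb E\big[|\fint_{Q_k}g_i|^2\big]\lesssim\epsilon^d/\ell^d$: since $g_i$ depends on $\omega_\epsilon$ \emph{nonlocally}, through $\nabla\phi_{e_i}$, quantifying the sensitivity of its cube average forces one to control the linearized corrector by Green-function-type decay, which rests on the large-scale regularity and corrector estimates imported from \cite{Fischer2021}. Making this rigorous, together with the bookkeeping needed to sum the cube-wise estimates against the non-integrable stationary fields $g_i,\sigma_i,\phi_{e_i}$ (done by weighting with $\nabla u_0,\nabla^2 u_0\in L^2$, and, for the $L^{2d/(d-2)}$-norm of $\phi_{e_i}\nabla u_0$, by the corrector moment bounds and the embedding $\nabla u_0\in H^1\hookrightarrow L^{2d/(d-2)}$) and absorbing the logarithmic factor in the block-average bound, is the technical content to be carried out in the subsequent sections; the optimization of $\ell$ is precisely what makes the lower-order term shift the rate from $O(\epsilon)$ to $O(\epsilon^{d/(d+2)})$.
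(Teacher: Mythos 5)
Your outline follows the same overall strategy as the paper: the ansatz \(w^\epsilon=u_0+\phi_{e_i}\partial_i u_0\), the identical residual identity (your \(g_i\) is the paper's \(\Gamma_i\)), coercivity from \(\Lambda\ge K^2+1\), the skew-symmetric \(\sigma_i\) for the flux error, and the same mesoscale mechanism — cube averages of \(\Gamma_i\) are of order \((\epsilon/\ell)^{d/2}\) by the spectral gap, the remaining contribution gains a factor \(\ell\), and balancing gives \(\ell\sim\epsilon^{d/(d+2)}\). Where you genuinely deviate is in how the factor \(\ell\) is produced: you decompose the random field, \(g_i=G_i+R_i\), and represent the cube-wise mean-zero part \(R_i\) as \(\nabla\cdot h_i^k\) via a Neumann problem, whereas the paper leaves \(\Gamma_i\partial_i u_0\) intact (its global \(L^2\) norm is finite precisely because of the weight \(\partial_i u_0\), Lemma \ref{pi test l2}) and instead subtracts from the test function \(u^\epsilon-w^\epsilon\) its cube average, applying the Poincar\'e inequality of Lemma \ref{poincare lemma} to that fluctuation. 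Likewise, the paper's Lemma \ref{pi test} keeps the weight \(\partial_i u_0\) inside the averaged functional and bounds its sensitivity using only the local estimate \(\mathbb E\bigl[\fint_{B_\epsilon}|\partial_{\omega_\epsilon}\nabla\phi_{e_i}|^2\bigr]\lesssim 1\) of Lemma \ref{lemma 26}, so no Green-function decay, large-scale regularity, or logarithmic loss is invoked; you defer exactly this point as ``technical content.''

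The step that does not close as written is the claimed bound \(|\langle\partial_i u_0\,R_i,\psi\rangle|\lesssim\mathcal C(\omega_\epsilon)\,\ell\,\|\nabla u_0\|_{H^1}\|\psi\|_{H^1}\). Integrating by parts with \(R_i=\nabla\cdot h_i^k\) produces \(-\sum_k\int_{Q_k}h_i^k\cdot\bigl(\psi\,\nabla\partial_i u_0+\partial_i u_0\,\nabla\psi\bigr)\); the second sum can be handled after freezing \(\partial_i u_0\) at its cube means (as you indicate, to absorb the non-decay of \(g_i\)), but then the leftover \(\sum_k\int_{Q_k}(\partial_i u_0-\overline{\partial_i u_0}^{\,Q_k})\,R_i\,\psi\) — equivalently the pairing of \(h_i^k\) with \(\nabla^2u_0\,\psi\) — carries no small factor: with only \(u_0\in H^2\cap W^{1,\infty}\) and \(\psi\in H^1\) there is no H\"older/Sobolev combination yielding \(\ell\) uniformly in \(d\ge3\) (one would need, e.g., \(\nabla^2u_0\in L^d\) or an \(\ell\)-gain in \(\|h_i^k\|_{L^d}\), neither available; Poincar\'e on \(\partial_i u_0\) in \(L^d\) via interpolation with \(W^{1,\infty}\) only recovers \(\ell^{2/d}\), which spoils the rate). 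The repair is exactly the paper's dual decomposition: do not split \(g_i\) at all; split \(\psi\) into cube mean plus fluctuation, estimate the fluctuation by \(\ell\,\|\nabla\psi\|_{L^2(Q_k)}\) (which the energy estimate absorbs), and apply the spectral gap only to \(\fint_{Q_k}\Gamma_i\partial_i u_0\) as in Lemma \ref{pi test}. With that replacement (and the localized sensitivity bound of Lemma \ref{lemma 26} in place of your Green-function argument), your outline coincides with the paper's proof of Theorem \ref{R space}.
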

Theorem \ref{R space} addresses the scenario in the entire space \(\mathbb{R}^d\), showing how lower-order terms affect the convergence rate. As dimension \(d\) increases, \(\epsilon^{d/(d+2)}\) approaches \(\epsilon\), indicating that the effect of lower-order terms diminishes with higher dimensions. In contrast, Theorem \ref{qualitative result} focuses on bounded domains.

\begin{theorem}\label{qualitative result}
Let $d\geq 3$. Let coefficients be defined as in Theorem \ref{R space}. Let $\mathcal{O}\subset \mathbb{R}^d $ be a bounded convex Lipschitz domain. Fix $  u_{0} \in H^2(\mathcal{O}; \mathbb{R}) $ and $(u^\epsilon)_{\epsilon>0}\in u_{0}+H^1_0(\mathcal{O}; \mathbb{R})$ to be the unique weak solution of
	\begin{equation*}
			-\nabla\cdot(\mathbf{a}^\epsilon\nabla u^\epsilon)+\mathbf b^\epsilon \nabla u^\epsilon+\Lambda u^\epsilon=-\nabla\cdot(\mathbf{\bar{a}}\nabla  u_{0})+\mathbf{\bar{b}} \nabla  u_{0}+\Lambda u_0\ \ \mathrm{in}\ \mathcal{O}
	\end{equation*}
 in a distributional sense.
Then we have 
	\begin{equation}\label{bounded result}
 \Vert u^{\epsilon}-u_0\Vert_{L^{2}(\mathcal{O})}\lesssim  \mathcal{C}(\omega_\epsilon) \epsilon^{\frac{1}{2}}\Vert \nabla u_0 \Vert_{H^1(\mathcal{O})},
	\end{equation}
 where \(\mathcal{C}(\omega_\epsilon)\) is defined as in Theorem \ref{R space}.
\end{theorem}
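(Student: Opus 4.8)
The plan is to mimic the strategy behind Theorem \ref{R space} but to exploit the bounded convex Lipschitz geometry of $\mathcal{O}$, which is what forces the rate up to $\epsilon^{1/2}$ and swamps the $\epsilon^{d/(d+2)}$ contribution of the lower-order terms. First I would introduce the two-scale ansatz $w^\epsilon := u^\epsilon - u_0 - \epsilon \phi_{e_i}^\epsilon \,\partial_i u_0$ (with the corrector rescaled to scale $\epsilon$), and compute the equation satisfied by $w^\epsilon$ in $\mathcal{O}$. The right-hand side splits into (i) the usual homogenization commutator terms $\nabla\cdot\big((\mathbf{a}^\epsilon(e_i+\nabla\phi_{e_i}^\epsilon)-\bar{\mathbf a}e_i)\partial_i u_0\big)$, which after introducing the flux corrector $\sigma$ (skew-symmetric potential for $\mathbf{a}^\epsilon(e_i+\nabla\phi_{e_i}^\epsilon)-\bar{\mathbf a}e_i$) become divergence-form terms carrying a factor $\epsilon$ times $\nabla^2 u_0$ and $(\epsilon\phi^\epsilon, \epsilon\sigma^\epsilon)\nabla^2 u_0$; (ii) the lower-order contributions $(\mathbf{b}^\epsilon(e_i+\nabla\phi_{e_i}^\epsilon)-\bar{\mathbf b}e_i)\partial_i u_0$ together with $\mathbf b^\epsilon\cdot\nabla(\epsilon\phi^\epsilon\partial_i u_0)$ and $\Lambda\epsilon\phi^\epsilon\partial_i u_0$; and (iii) the boundary mismatch: $w^\epsilon$ does not vanish on $\partial\mathcal{O}$ but equals $-\epsilon\phi^\epsilon_{e_i}\partial_i u_0$ there.

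Next I would handle the boundary mismatch by a cutoff: fix a boundary layer $\{x\in\mathcal{O}: \operatorname{dist}(x,\partial\mathcal O)<\epsilon\}$, choose $\eta_\epsilon\in C_c^\infty(\mathcal O)$ with $\eta_\epsilon=1$ outside this layer, $|\nabla\eta_\epsilon|\lesssim \epsilon^{-1}$, and test the equation for $\tilde w^\epsilon := w^\epsilon + \epsilon\eta_\epsilon\phi^\epsilon_{e_i}\partial_i u_0 \in H_0^1(\mathcal O)$ against $\tilde w^\epsilon$ itself. The coercivity from \textbf{A1} plus the $\Lambda u^\epsilon$ term (with $\Lambda\geq K^2+1$ absorbing the first-order term via Young's inequality, exactly as one controls $\mathbf b^\epsilon\nabla u^\epsilon$) gives $\|\nabla\tilde w^\epsilon\|_{L^2(\mathcal O)}^2+\|\tilde w^\epsilon\|_{L^2(\mathcal O)}^2 \lesssim$ (sum of the right-hand side pairings). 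Each divergence-form term pairs to give $\epsilon$ times an $L^2$ quantity involving $\nabla^2 u_0$, $\nabla u_0$, and the stationary fields $\nabla\phi^\epsilon,\nabla\sigma^\epsilon$ (whose $L^2$-averages over $\mathcal O$ are controlled, in expectation, by $\mathbb E[|\nabla\phi|^2]<\infty$ from \eqref{corrector 3} and the analogous bound for $\sigma$ obtained via the spectral gap à la \cite{Fischer2021}); crucially the boundary-layer contributions, which have no spare $\epsilon$ because of $|\nabla\eta_\epsilon|\lesssim\epsilon^{-1}$, are saved by the smallness of the layer: $\int_{\{\operatorname{dist}<\epsilon\}}|\phi^\epsilon\nabla u_0|^2 \lesssim \epsilon \cdot (\text{sup norm of }|\phi^\epsilon|^2\text{-average})\|\nabla u_0\|_\infty^2$ using the trace/layer estimate on the Lipschitz boundary, producing the net $\epsilon^{1/2}$. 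The lower-order terms in (ii) contribute at worst $\epsilon\|\phi^\epsilon\|_{L^2}\|\nabla u_0\|_{H^1}$-type quantities, which are $O(\epsilon)$ and hence dominated by $O(\epsilon^{1/2})$ — this is the quantitative reason the rate in bounded domains is insensitive to $\mathbf b^\epsilon$.

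Then I would package the random prefactors: every occurrence of $\phi^\epsilon$, $\nabla\phi^\epsilon$, $\sigma^\epsilon$, $\nabla\sigma^\epsilon$ appears multiplied by a deterministic $u_0$-factor, so Cauchy--Schwarz in $x$ lets me pull out $\mathcal C(\omega_\epsilon) := C\big(1+\fint_{\mathcal O}|\phi^\epsilon|^2 + \fint_{\mathcal O}|\sigma^\epsilon|^2 + \fint_{\mathcal O}|\nabla\phi^\epsilon|^2+\fint_{\mathcal O}|\nabla\sigma^\epsilon|^2 + \text{boundary-layer averages}\big)^{1/2}$, and stationarity plus the moment bounds give $\mathbb E[\mathcal C(\omega_\epsilon)]\leq C(d,\lambda,K,\Lambda)$; here I would invoke the same $\mathcal C(\omega_\epsilon)$ already constructed in the proof of Theorem \ref{R space}, augmented by the boundary-layer terms whose expectation is again controlled by stationarity. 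Finally, from $\|\nabla\tilde w^\epsilon\|_{L^2}+\|\tilde w^\epsilon\|_{L^2}\lesssim \mathcal C(\omega_\epsilon)\epsilon^{1/2}\|\nabla u_0\|_{H^1(\mathcal O)}$ I recover $\|u^\epsilon-u_0\|_{L^2(\mathcal O)} \leq \|\tilde w^\epsilon\|_{L^2} + \epsilon\|\eta_\epsilon\phi^\epsilon\partial_i u_0\|_{L^2} \lesssim \mathcal C(\omega_\epsilon)\epsilon^{1/2}\|\nabla u_0\|_{H^1(\mathcal O)}$, using Poincaré (valid since $\tilde w^\epsilon\in H_0^1$) and absorbing the corrector term, which is again $O(\epsilon)$ in $L^2$. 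The main obstacle I anticipate is the boundary-layer estimate on a merely Lipschitz (though convex) domain: one needs $\int_{\{\operatorname{dist}(\cdot,\partial\mathcal O)<\epsilon\}}|\phi^\epsilon|^2\,\mathrm dx \lesssim \epsilon\,\mathcal C^2$ with the right $\epsilon$-power and a random constant of bounded expectation, which requires combining the stationarity of $\nabla\phi^\epsilon$ (not of $\phi^\epsilon$ itself) with a Poincaré-in-the-layer argument — this is precisely where the paper's announced localization-into-small-grids technique enters, tiling the layer by $\epsilon$-cubes and applying Poincaré on each.
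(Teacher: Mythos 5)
Your overall skeleton (two-scale ansatz with the flux corrector, a boundary cutoff restoring the $H^1_0$ class of the test function, coercivity using $\Lambda\ge K^2+1$ to absorb the drift, and a layer estimate producing $\epsilon^{1/2}$ from the boundary) coincides with the paper's proof, but there is a genuine gap in your treatment of the lower-order term. The term $\Gamma_i\partial_i u_0$ with $\Gamma_i=\mathbf b^\epsilon(e_i+\nabla\phi_{e_i})-\bar{\mathbf b}e_i$, which you yourself list in group (ii), is \emph{not} an ``$\epsilon\|\phi^\epsilon\|_{L^2}\|\nabla u_0\|_{H^1}$-type quantity'': $\Gamma_i$ is a stationary random field of order one in $L^2$ (it carries no factor of $\epsilon$ and no factor of $\phi$), it merely has vanishing expectation. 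Estimating $\int_{\mathcal O}\Gamma_i\,\partial_i u_0\,\tilde w^\epsilon\,\mathrm{d}x$ by Cauchy--Schwarz, as your energy estimate implicitly does, yields an $O(1)$ right-hand side that cannot be absorbed, and the argument then gives no rate at all. The smallness of this term is a purely stochastic cancellation effect, and this is exactly where the paper's grid localization enters: $\mathcal O$ is tiled by cubes $z+\Box_\iota$ of side $\iota=\epsilon^{d/(d+2)}$ (not $\epsilon$), the test function is split into its cube average plus oscillation, the oscillation gains a factor $\iota$ by the rescaled Poincar\'e inequality (Lemma \ref{poincare lemma}), and the cube averages of $\Gamma_i\partial_i u_0$ are controlled via the spectral gap in Lemma \ref{pi test}, giving a contribution of size $\epsilon^{d/2}\iota^{-d/2}$; balancing against $\iota$ yields $\epsilon^{d/(d+2)}$, which for $d\ge3$ is then dominated by the boundary contribution $\tau^{1/2}=\epsilon^{1/2}$. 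In your writeup you instead attribute the grid technique to the bound on $\int_{\{\mathrm{dist}<\epsilon\}}|\phi^\epsilon|^2$, which is not where it is required (that bound follows from the corrector moment estimates as in Lemma \ref{phi sigma} restricted to the layer, combined with \eqref{trace type}); the cubes are needed for $\Gamma_i$, and without this step (or an equivalent device, e.g.\ a sublinear potential for $\Gamma_i$ with quantitative bounds) the proof does not close.

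A secondary issue: you invoke $\|\nabla u_0\|_{L^\infty}$ in the boundary-layer estimate, but Theorem \ref{qualitative result} assumes only $u_0\in H^2(\mathcal O)$; the $W^{1,\infty}$ hypothesis is made only in the whole-space theorem. The paper avoids this by applying the trace-type estimate \eqref{trace type} to $\nabla u_0$ itself, i.e.\ $\|\nabla u_0\|^2_{L^2((\partial\mathcal O)_\tau)}\lesssim\tau\|\nabla u_0\|^2_{H^1(\mathcal O)}$, together with the corrector bounds on the layer; with $\tau=\epsilon$ this is precisely what produces the net $\epsilon^{1/2}$, so your mechanism is right but must be run with the $H^2$-based layer estimate rather than a sup bound on $\nabla u_0$.
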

It is widely believed that adding lower-order terms does not alter the convergence rate, leading to the conclusion that their effect is negligible and need not be studied. However, Theorem \ref{R space} demonstrates that, in unbounded domains, lower-order terms indeed change the convergence rate, suggesting a notable impact. This may raise questions, which we address by introducing Theorem \ref{qualitative result} to examine the bounded domain case. In bounded domains, even with the inclusion of lower-order terms, the convergence rate remains at \(O(\epsilon^{1/2})\), similar to cases without these terms. This result, however, does not imply that lower-order terms have no effect on convergence. Rather, the influence of boundary conditions is so dominant that it overshadows the impact of lower-order terms. Thus, the influence of lower-order terms on convergence is real, but in bounded domains, it is hidden under the stronger boundary effects.


\section{Preliminary}\label{sec 2}
\subsection{Flux Corrector and Localized Correctors}
In addition to the corrector \(\phi_{e_i}\), we recall the notion of the flux corrector \(\sigma_i\), as introduced in \cite{GloriaAntoine} and further detailed in \cite{Duerinckx2} and \cite{Fischer2021}. The flux corrector allows us to express the two-scale homogenization error in divergence form, providing a critical tool for analyzing fluctuations in stochastic homogenization.

\begin{definition}[Flux Corrector] \label{flux-corrector}
Let $\mathbf{a}^\epsilon$ and $\bar{\mathbf{a}}$ be defined as in Theorem \ref{R space} and \(\phi_{e_i}\) be as defined in Definition \ref{first order}. Let \(\sigma_i = (\sigma_{ijk})_{i,j,k=1}^{d}\)
 be the 3-tensor defined as the weak solution of
\begin{equation*}
    -\Delta \sigma_{ijk} =  \partial_j q_{ik} - \partial_k q_{ij},
\end{equation*}
 where \(q_i\) denotes the flux of the corrector
\begin{equation*}
    q_i = \mathbf{a}^{\epsilon} (\nabla \phi_i + e_i) - \bar{\mathbf{a}} e_i, \quad q_{ij} := (q_i)_j,
\end{equation*}
with the following conditions:
\begin{itemize}
    \item[1.]  For \(\mathbb{P}\)-almost every realization of the random field \(\omega_\epsilon\), the flux corrector \(\sigma_i(\omega_\epsilon, \cdot)\) has the regularity \(\sigma_i(\omega_\epsilon, \cdot) \in H^1_{\text{loc}}(\mathbb{R}^d; \mathbb{R}^{d \times d})\), satisfies \(\fint_{B_1} \sigma_i(\omega_\epsilon, x) \, \mathrm{d}x = 0\).
    
   \item[2.] The gradient of the flux corrector \(\nabla \sigma_{i}\) is stationary, i.e., for \(\mathbb{P}\)-almost surely and for all \(y \in \mathbb{R}^d\),
    \[
    \nabla \sigma_{i}(\omega_\epsilon, \cdot + y) = \nabla \sigma_{i}(\omega_\epsilon(\cdot + y), \cdot) \quad \text{almost everywhere in } \mathbb{R}^d.
    \]
    
  \item[3.]  The gradient of the flux corrector \(\nabla \sigma_i\) has finite second moments and vanishing expectation, i.e.,
    \[
    \mathbb{E}[\nabla \sigma_{i}] = 0, \quad \mathbb{E}[|\nabla \sigma_i|^2] < \infty.
    \]
    
    \item[4.] The flux corrector \(\sigma_i\) is sublinear at infinity, meaning that \(\mathbb{P}\)-almost surely,
    \[
    \lim_{R \to \infty} \frac{1}{R^2} \fint_{B_R} |\sigma_i(\omega_\epsilon, x)|^2 \, \mathrm{d}x = 0.
    \]
    
    \item[5.]  The flux corrector satisfies the skew-symmetry condition \(\sigma_{ijk} = -\sigma_{ikj}\) \cite[Lemma 1]{BellaP2}.
\end{itemize}
\end{definition}

To utilize the results from \cite{Fischer2021}, particularly the estimates on correctors, we introduce localized correctors. Localized correctors are essential because they overcomes the challenges posed by the original corrector, which is defined over the entire unbounded domain \(\mathbb{R}^d\) and lacks the stationarity property. The stationarity property is crucial for the subsequent proofs in this work. By adopting localized correctors, we can apply the necessary estimates from \cite{Fischer2021} and ensure the stationarity needed for our analysis.

\begin{lemma}[Localized Correctors]\label{Localized correctors}\cite[Lemma 18]{Fischer2021}
Let $\mathbf{a}^\epsilon$ be defined as in Theorem \ref{R space}. Fix \(T \geq 1\). There exists a unique vector field \(\phi_{e_{i}}^T := \phi_{e_{i}}^T(\omega_\epsilon, \cdot)\) which is the weak solution of
\begin{equation*}
-\nabla \cdot (\mathbf{a}^\epsilon(e_{i}+ \nabla \phi^T_{e_i})) + \frac{1}{T}\phi_{e_{i}}^T = 0 \qquad \text{in } \mathbb{R}^d,
\end{equation*}
and the gradient \(\nabla \phi^T_{e_i}\) satisfies
\begin{equation*}
    \mathbb{E}\left[ \nabla \phi^T_{e_i}\right] = 0, \qquad \mathbb{E}\left[ |\nabla \phi^T_{e_i}|^2 \right] < \infty.
\end{equation*}
\end{lemma}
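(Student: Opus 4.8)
The statement to prove is Lemma \ref{Localized correctors} (Localized Correctors), which asserts existence, uniqueness, and second-moment bounds for the solution $\phi_{e_i}^T$ of the regularized corrector equation
\[
-\nabla\cdot(\mathbf{a}^\epsilon(e_i+\nabla\phi_{e_i}^T))+\frac{1}{T}\phi_{e_i}^T=0\quad\text{in }\mathbb{R}^d.
\]
This is a standard "massive term regularization" of the corrector problem, and the plan is to obtain it by a Lax–Milgram argument on $H^1(\mathbb{R}^d)$ combined with a stationarity/energy estimate. First I would set up the weak formulation: seek $\phi=\phi_{e_i}^T\in H^1(\mathbb{R}^d)$ such that
\[
\int_{\mathbb{R}^d}\nabla\psi\cdot\mathbf{a}^\epsilon\nabla\phi\,\mathrm{d}x+\frac{1}{T}\int_{\mathbb{R}^d}\psi\,\phi\,\mathrm{d}x=-\int_{\mathbb{R}^d}\nabla\psi\cdot\mathbf{a}^\epsilon e_i\,\mathrm{d}x\qquad\text{for all }\psi\in H^1(\mathbb{R}^d).
\]
The bilinear form $B(\phi,\psi)$ on the left is bounded by \textbf{A1}, and it is coercive because $B(\phi,\phi)\geq\|\nabla\phi\|_{L^2}^2+\frac{1}{T}\|\phi\|_{L^2}^2\geq\min(1,1/T)\|\phi\|_{H^1}^2$ (here $T\geq1$, so this is $\frac{1}{T}\|\phi\|_{H^1}^2$). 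The right-hand side $\psi\mapsto-\int\nabla\psi\cdot\mathbf{a}^\epsilon e_i$ is a bounded linear functional on $H^1(\mathbb{R}^d)$ since $\mathbf{a}^\epsilon e_i\in L^\infty\subset L^2_{\mathrm{loc}}$... wait, actually $\mathbf{a}^\epsilon e_i$ is only bounded, not $L^2(\mathbb{R}^d)$, so the functional is not obviously bounded on all of $H^1(\mathbb{R}^d)$. The standard fix is to work pathwise with the whole-space problem via approximation on large balls $B_R$ with, say, the equation $-\nabla\cdot(\mathbf{a}^\epsilon(e_i+\nabla\phi_R))+\frac1T\phi_R=0$ in $B_R$ with zero boundary data, test with $\phi_R$ to get the uniform bound $\|\nabla\phi_R\|_{L^2(B_R)}^2+\frac1T\|\phi_R\|_{L^2(B_R)}^2\lesssim\|\mathbf{a}^\epsilon e_i\|_{L^\infty}\|\nabla\phi_R\|_{L^2(B_R)}\lesssim|B_R|^{1/2}$, wait that blows up — so instead one rewrites the equation by testing with $\phi_R$ and absorbing: $\|\nabla\phi_R\|_{L^2(B_R)}^2\lesssim\int_{B_R}|\mathbf{a}^\epsilon e_i||\nabla\phi_R|\leq\|\mathbf{a}^\epsilon e_i\|_{L^\infty(B_R)}|B_R|^{1/2}\|\nabla\phi_R\|_{L^2(B_R)}$ which gives $\|\nabla\phi_R\|_{L^2}\lesssim|B_R|^{1/2}$, not uniform. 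The correct route, following \cite{GloriaAntoine} and \cite{Fischer2021}, is to exploit \emph{stationarity}: the gradient $\nabla\phi_R$ (or rather the limit object) should be stationary, and one estimates the \emph{expected} energy density rather than the total energy.

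Thus I would instead argue as follows. For existence and uniqueness, observe that the operator $\phi\mapsto-\nabla\cdot(\mathbf{a}^\epsilon\nabla\phi)+\frac1T\phi$ is, for each fixed realization $\omega_\epsilon$, an isomorphism from $H^1(\mathbb{R}^d)$ to $H^{-1}(\mathbb{R}^d)$ by Lax–Milgram (coercivity from the $\frac1T$ mass term). The issue is that $\nabla\cdot(\mathbf{a}^\epsilon e_i)$ need not lie in $H^{-1}(\mathbb{R}^d)$ globally. To handle this, I would pass to the probabilistic/stationary setting: introduce the Hilbert space of stationary $L^2(\Omega)$ random fields with mean zero and finite second moment, on which the bilinear form $(\phi,\psi)\mapsto\mathbb{E}[\nabla\psi\cdot\mathbf{a}^\epsilon\nabla\phi]+\frac1T\mathbb{E}[\psi\phi]$ is coercive (again by the mass term, using $T\geq1$) and bounded, while the right-hand side $\psi\mapsto-\mathbb{E}[\nabla\psi\cdot\mathbf{a}^\epsilon e_i]$ is bounded because $\mathbb{E}[|\mathbf{a}^\epsilon e_i|^2]\leq\lambda^2<\infty$ by \textbf{A1}. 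Lax–Milgram in this space yields a stationary random field whose realizations solve the PDE weakly and satisfy $\mathbb{E}[\nabla\phi_{e_i}^T]=0$ and
\[
\mathbb{E}[|\nabla\phi_{e_i}^T|^2]+\frac1T\mathbb{E}[|\phi_{e_i}^T|^2]\leq\lambda^2,
\]
obtained by testing with $\psi=\phi_{e_i}^T$ and applying Cauchy–Schwarz: $\mathbb{E}[\nabla\phi\cdot\mathbf{a}^\epsilon\nabla\phi]+\frac1T\mathbb{E}[\phi^2]=-\mathbb{E}[\nabla\phi\cdot\mathbf{a}^\epsilon e_i]\leq\lambda(\mathbb{E}[|\nabla\phi|^2])^{1/2}$, whence $\mathbb{E}[|\nabla\phi|^2]\leq\lambda^2$ after absorbing the quadratic term using ellipticity $\mathbb{E}[\nabla\phi\cdot\mathbf{a}^\epsilon\nabla\phi]\geq\mathbb{E}[|\nabla\phi|^2]$. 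That $\mathbb{E}[\nabla\phi_{e_i}^T]=0$ follows since the solution lives in the mean-zero subspace (or by testing against constants in the approximation).

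For uniqueness I would argue that if $\phi_1,\phi_2$ are two $H^1(\mathbb{R}^d)$ solutions with the stated properties, their difference $w=\phi_1-\phi_2$ satisfies $-\nabla\cdot(\mathbf{a}^\epsilon\nabla w)+\frac1T w=0$; testing with $w$ (justified by a cutoff argument, since $w\in H^1(\mathbb{R}^d)$ and the massive term provides decay) gives $\|\nabla w\|_{L^2}^2+\frac1T\|w\|_{L^2}^2\leq0$, so $w\equiv0$ — and in the stationary setting the analogous computation with $\mathbb{E}[\cdot]$ gives the same. The main obstacle, and the only genuinely delicate point, is the \emph{functional-analytic framework}: one must be careful that "unique weak solution in $H^1(\mathbb{R}^d)$" is interpreted pathwise with the stationarity constraint built in (otherwise the right-hand side is not in $H^{-1}(\mathbb{R}^d)$ and the problem is ill-posed on the full space), and that the limit $R\to\infty$ of the finite-volume approximations $\phi_R$ on $B_R$ actually converges — this convergence is precisely where stationarity and the uniform \emph{expected} energy bound (rather than total energy) are used, in the spirit of \cite{GloriaAntoine}. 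Since this lemma is quoted verbatim from \cite[Lemma 18]{Fischer2021}, I would in the actual write-up either cite it directly or reproduce only the short Lax–Milgram-plus-energy-estimate argument sketched above, emphasizing that the role of $T\geq1$ is solely to make the mass term $\frac1T$ provide coercivity uniformly.
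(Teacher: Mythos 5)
The paper itself offers no proof of this lemma: it is quoted verbatim from \cite[Lemma 18]{Fischer2021}, so there is no internal argument to compare against, and the relevant benchmark is the standard proof in the cited literature. Your sketch is essentially that standard argument -- Lax--Milgram applied to the bilinear form $(\phi,\psi)\mapsto\mathbb{E}[\nabla\psi\cdot\mathbf{a}^\epsilon\nabla\phi]+\frac{1}{T}\mathbb{E}[\psi\phi]$ on the Hilbert space of stationary random fields with finite second moments, coercivity supplied uniformly by the massive term since $T\geq 1$, and the energy estimate $\mathbb{E}[|\nabla\phi^T_{e_i}|^2]+\frac{1}{T}\mathbb{E}[|\phi^T_{e_i}|^2]\leq\lambda^2$ from testing with the solution itself -- and you correctly identify why a naive pathwise $H^1(\mathbb{R}^d)$ formulation fails (the right-hand side $\nabla\cdot(\mathbf{a}^\epsilon e_i)$ is not globally $H^{-1}$). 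Two small imprecisions: first, the solution $\phi^T_{e_i}$ is not in $H^1(\mathbb{R}^d)$ but only in $H^1_{\mathrm{uloc}}$ (it is stationary, hence non-decaying), so your pathwise uniqueness step ``test with $w=\phi_1-\phi_2\in H^1(\mathbb{R}^d)$'' is not available as stated; uniqueness in the $H^1_{\mathrm{uloc}}$ class is obtained by testing with exponentially weighted cutoffs $e^{-c|x|/\sqrt{T}}$, where the $\frac{1}{T}$ mass term absorbs the weight's gradient -- this is exactly the Caccioppoli-with-weight argument in \cite{GloriaAntoine} and \cite{Fischer2021}, and uniqueness in the stationary $L^2(\Omega)$ space already follows from Lax--Milgram itself. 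Second, $\mathbb{E}[\nabla\phi^T_{e_i}]=0$ has a cleaner justification than ``living in the mean-zero subspace'': unlike the unregularized corrector, $\phi^T_{e_i}$ itself is stationary, so $\mathbb{E}[\phi^T_{e_i}(x)]$ is constant in $x$ and $\mathbb{E}[\nabla\phi^T_{e_i}]=\nabla\,\mathbb{E}[\phi^T_{e_i}]=0$. With those two points repaired, your argument is the same as the one behind the cited lemma.
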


\begin{lemma}[Localized Flux Correctors]\label{Localized flux correctors}\cite[Lemma 18]{Fischer2021} Let $\mathbf{a}^\epsilon$ be defined as in Theorem \ref{R space} and \(\phi^T_{e_i}\) be as defined in Lemma \ref{Localized correctors}. Fix \(T \geq 1\). There exists a unique tensor field \(\sigma^T_{i} := \sigma^T_{i}(\omega_\epsilon, \cdot) \in H^1_{\text{uloc}}(\mathbb{R}^d; \mathbb{R}_\text{skew}^{d \times d})\), which satisfies the following PDE in the distributional sense:
\begin{equation*}
-\Delta \sigma^T_{ijk} + \frac{1}{T}\sigma^T_{ijk} = \partial_j q^T_{ik} - \partial_k q^T_{ij} \qquad \text{in } \mathbb{R}^d,
\end{equation*}
where \(q^T_{i} := \mathbf{a}^\epsilon(\nabla \phi^T_{e_i} + e_i)\) is the localized flux associated with the corrector. 
\end{lemma}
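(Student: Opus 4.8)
The strategy is to produce $\sigma^T_i$ componentwise as the solution of the scalar massive equation $-\Delta w + \tfrac1T w = \partial_j q^T_{ik} - \partial_k q^T_{ij}$ via the Lax--Milgram theorem --- the zeroth-order term $\tfrac1T$ is exactly what makes $-\Delta+\tfrac1T$ coercive, so that, in contrast to the unregularized flux corrector, no zero-mean normalization of the right-hand side is needed --- and then to read off skew-symmetry, stationarity of $\nabla\sigma^T_i$, and finiteness of the second moments from the uniqueness of the solution.

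First I would record the properties of the data. By Lemma \ref{Localized correctors}, $\phi^T_{e_i}$ is the unique solution of a massive problem with stationary coefficients, so $\phi^T_{e_i}$ --- hence $\nabla\phi^T_{e_i}$ --- is itself stationary (shifting the equation by $y$ and invoking uniqueness), and $\mathbb{E}[|\nabla\phi^T_{e_i}|^2]<\infty$. Together with the ellipticity bound \textbf{A1} this shows that the localized flux $q^T_i=\mathbf{a}^\epsilon(\nabla\phi^T_{e_i}+e_i)$ is a stationary field with $\mathbb{E}[|q^T_i|^2]\le\lambda^2\,\mathbb{E}[|\nabla\phi^T_{e_i}+e_i|^2]<\infty$; in particular $\mathbb{P}$-a.s.\ $q^T_i\in L^2_{\mathrm{uloc}}(\mathbb{R}^d)$, and for each pair $(j,k)$ the right-hand side $g_{ijk}:=\partial_j q^T_{ik}-\partial_k q^T_{ij}$ lies in $H^{-1}_{\mathrm{uloc}}(\mathbb{R}^d)$ and is manifestly antisymmetric in $(j,k)$.

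For existence I would work in the stationary (probabilistic) formulation: on the Hilbert space of stationary $L^2(\Omega)$ fields equipped with the inner product built from the generator of the translation action and the mass term, the bilinear form associated with $-\Delta+\tfrac1T$ is bounded and coercive (it dominates $\tfrac1T\,\mathbb{E}[w^2]$), so Lax--Milgram yields a unique stationary $w=\sigma^T_{ijk}$ with $\mathbb{E}[|\nabla\sigma^T_{ijk}|^2]+\tfrac1T\mathbb{E}[|\sigma^T_{ijk}|^2]\lesssim\mathbb{E}[|q^T_i|^2]<\infty$; equivalently one solves the equation on a large torus and passes to the limit using this uniform bound. Stationarity forces $\fint_{B_R(x)}(|\nabla\sigma^T_{ijk}|^2+|\sigma^T_{ijk}|^2)$ to be bounded uniformly in $x$ and $R$, so $\mathbb{P}$-a.s.\ the realization belongs to $H^1_{\mathrm{uloc}}(\mathbb{R}^d)$, which provides the field required by the lemma.

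Finally I would establish uniqueness in $H^1_{\mathrm{uloc}}(\mathbb{R}^d)$ and harvest the structural properties. If $w_1,w_2\in H^1_{\mathrm{uloc}}$ both solve the equation, then $w:=w_1-w_2$ satisfies $-\Delta w+\tfrac1T w=0$; by interior elliptic regularity $w\in L^\infty(\mathbb{R}^d)$, and then $v:=w^2$ obeys $\Delta v=2|\nabla w|^2+\tfrac2T w^2\ge\tfrac2T v\ge0$, so a Liouville-type argument --- if $v(x_0)>0$, subharmonicity together with the differential inequality forces the spherical averages of $v$ about $x_0$ to grow at least quadratically, contradicting boundedness --- yields $w\equiv0$ (alternatively one runs an $e^{c|x|/\sqrt T}$-weighted energy estimate). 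Uniqueness then delivers skew-symmetry, since $g_{ijk}=-g_{ikj}$ and the commutation of $-\Delta+\tfrac1T$ with the swap $j\leftrightarrow k$ imply that $-\sigma^T_{ikj}$ solves the same equation as $\sigma^T_{ijk}$; and it delivers stationarity of $\nabla\sigma^T_i$, since shifting the equation by $y\in\mathbb{R}^d$ and using stationarity of $q^T_i$ identifies $\nabla\sigma^T_i(\omega_\epsilon,\cdot+y)$ with $\nabla\sigma^T_i(\omega_\epsilon(\cdot+y),\cdot)$. I expect the main obstacle to be precisely this well-posedness on the unbounded domain: reconciling the non-integrability over $\mathbb{R}^d$ of the stationary data with a genuinely well-posed variational problem, and upgrading uniqueness from the class of stationary fields to all of $H^1_{\mathrm{uloc}}$ --- both hinge on the mass term $\tfrac1T$, and the torus (or stationary) approximation must be arranged so that its limit is truly stationary and not merely a $\mathbb{P}$-a.s.\ solution.
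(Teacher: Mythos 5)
The paper itself offers no proof of this lemma: it is quoted wholesale from \cite[Lemma 18]{Fischer2021}, so your argument can only be compared with the standard construction there, which it largely reproduces. The architecture you propose is the right one: the massive term $\tfrac1T$ makes the problem coercive, skew-symmetry follows from the antisymmetry of $\partial_j q^T_{ik}-\partial_k q^T_{ij}$ in $(j,k)$ plus uniqueness, stationarity of $\nabla\sigma^T_i$ follows from shift-covariance of $q^T_i$ plus uniqueness, and your Liouville argument for uniqueness in $H^1_{\mathrm{uloc}}$ (from $\Delta(w^2)\ge \tfrac2T w^2$, forcing quadratic growth of spherical means unless $w\equiv 0$) is correct.

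The genuine gap is the passage from the stationary $L^2(\Omega)$ solution to a $\mathbb{P}$-a.s.\ realization in $H^1_{\mathrm{uloc}}(\mathbb{R}^d)$. You claim that stationarity forces $\fint_{B_R(x)}\left(|\nabla\sigma^T_{ijk}|^2+|\sigma^T_{ijk}|^2\right)\mathrm{d}y$ to be bounded uniformly in $x$ and $R$, hence a.s.\ membership in $H^1_{\mathrm{uloc}}$. Stationarity only says the \emph{expectation} of the unit-ball norm does not depend on $x$ (and the ergodic theorem controls averages over large balls about a fixed center); it does not give an almost-sure bound on $\sup_{x}\Vert\sigma^T_{ijk}\Vert_{H^1(B_1(x))}$, and for a stationary field with merely finite second moments this supremum is typically infinite (an unbounded Gaussian field already shows this). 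What actually yields uloc regularity here is deterministic, not probabilistic: by \textbf{A1} the coefficient field is bounded, so $\phi^T_{e_i}$ from Lemma \ref{Localized correctors} comes with a deterministic $H^1_{\mathrm{uloc}}$ bound (depending on $d,\lambda,T$) obtained from exponentially weighted energy estimates at scale $\sqrt{T}$, or equivalently from Dirichlet approximations on balls $B_R$ with $R$-uniform weighted bounds; hence $q^T_i=\mathbf{a}^\epsilon(\nabla\phi^T_{e_i}+e_i)\in L^2_{\mathrm{uloc}}$ deterministically, and the same weighted estimates applied to $-\Delta+\tfrac1T$ give existence of $\sigma^T_{ijk}\in H^1_{\mathrm{uloc}}$ pathwise, with uniqueness as you proved. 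Stationarity and $\mathbb{E}[|\nabla\sigma^T_i|^2]<\infty$ are then consequences (via uniqueness and your Lax--Milgram energy bound), rather than the source of the uloc membership. Also note that the torus approximation you offer as an ``equivalent'' route is not a mere reformulation: it requires periodizing the law of the coefficient field, which is an additional construction that must be justified. With the weighted-estimate step put in place of the stationarity shortcut, your proof is complete and is essentially the argument of \cite{Fischer2021}.
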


By adding the \(\frac{1}{T} \phi^T_{e_i}\) term, the behavior of the corrector function \(\phi^T_{e_i}\) at infinity is controlled. As \(T\) approaches infinity, the term \(\frac{1}{T} \phi^T_{e_i}\) tends towards zero, causing \(\phi^T_{e_i}\) to increasingly approximate the original corrector function \(\phi_{e_{i}}\), thereby ensuring the effectiveness of the localization method.  The argument for \(\sigma_i^T\) are similar to those for \(\phi^T_{e_i}\). This localized corrector approximates the original correctors in the sense that \cite[Lemma 18]{Fischer2021}
\begin{equation*}
    (\nabla \phi^T_{e_i}, \nabla \sigma^T_{i}) \to  (\nabla \phi_{e_{i}}, \nabla \sigma_{i}) \qquad \text{in } L^2(\Omega \times B_r) \text{ as } T \to \infty, \text{ for any } r > 0.
\end{equation*}

\begin{lemma}\cite[Lemma 2.22]{Stefan2018} \label{Lemma 2.22}
    Suppose the probability measure \(\mathbb{P}\) is stationary, and let \(f(x)\) denote a stationary \(L^1\)-random field. For any open and bounded set \(A \subset \mathbb{R}^d\) and for almost every \(x \in \mathbb{R}^d\), we have
    \begin{equation*}
        \mathbb{E}\left[f(x)\right] = \mathbb{E}\left[\fint_A f(x) \, \mathrm{d}x\right] = \mathbb{E}\left[\fint_{B_1} f(x) \, \mathrm{d}x\right].
    \end{equation*}
\end{lemma}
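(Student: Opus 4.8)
The plan is to reduce all three quantities to a single number, namely the common value of the function \(x \mapsto \mathbb{E}[f(x)]\), which I claim is independent of \(x\) up to a Lebesgue-null set. Once that is shown, the two averaged identities are immediate from Tonelli's theorem: \(\mathbb{E}\big[\fint_A f \,\mathrm dx\big] = \fint_A \mathbb{E}[f(x)]\,\mathrm dx\), and if the integrand equals a constant \(c\) for a.e.\ \(x\), then both \(\fint_A\) and \(\fint_{B_1}\) return \(c\). Thus the whole lemma rests on the spatial constancy of the mean, and in particular uses only stationarity and Fubini — no ergodicity or subadditive ergodic theorem is needed.

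First I would fix the reading of the hypotheses. "Stationary \(L^1\)-random field" is understood to mean: \(f\) is jointly measurable in \((\omega_\epsilon, x)\), is locally integrable in \(x\) for \(\mathbb{P}\)-a.e.\ \(\omega_\epsilon\), satisfies \(\mathbb{E}\big[\fint_{B_1}|f|\,\mathrm dx\big] < \infty\), and obeys the stationarity relation \(f(\omega_\epsilon,\cdot+y) = f(\omega_\epsilon(\cdot+y),\cdot)\) for every \(y \in \mathbb{R}^d\), \(\mathbb{P}\)-almost surely, exactly as for \(\nabla\phi_{e_i}\) in Definition \ref{first order}. By Tonelli, \(\mathbb{E}[|f(x)|] < \infty\) for a.e.\ \(x\), so \(g(x) := \mathbb{E}[f(x)]\) is a well-defined element of \(L^1_{\mathrm{loc}}(\mathbb{R}^d)\). (If instead \(f\) is given in the canonical form \(f(x,\cdot) = F(\tau_x\, \cdot)\) with \(F \in L^1(\Omega)\) and \(\{\tau_x\}\) measure preserving, then \(g \equiv \mathbb{E}[F]\) literally, and the argument below collapses to a one-line Fubini computation; I keep the parameter-field formulation since that is the one used in this paper.)

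Next I would prove that \(g\) is translation invariant in the almost-everywhere sense, i.e.\ for each fixed \(y\), \(g(x+y) = g(x)\) for a.e.\ \(x\). Evaluating the stationarity relation at a point gives \(f(\omega_\epsilon, x+y) = f(\omega_\epsilon(\cdot+y), x)\) for a.e.\ \(x\), \(\mathbb{P}\)-a.s.; taking \(\mathbb{E}[\,\cdot\,]\) in \(\omega_\epsilon\) at fixed \(x\) yields \(g(x+y) = \mathbb{E}[f(\omega_\epsilon(\cdot+y), x)]\). Now \textbf{P1} says precisely that the shift \(\omega_\epsilon \mapsto \omega_\epsilon(\cdot+y)\) preserves \(\mathbb{P}\), so \(\mathbb{E}[f(\omega_\epsilon(\cdot+y), x)] = \mathbb{E}[f(\omega_\epsilon, x)] = g(x)\). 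Hence \(g(\cdot+y) = g(\cdot)\) a.e. A locally integrable function invariant under every translation in this sense is a.e.\ equal to a constant \(c\); this is standard and can be seen, e.g., by convolving with a smooth bump \(\varphi_\delta\) (then \(g * \varphi_\delta\) is genuinely translation invariant and continuous, hence constant, and \(g*\varphi_\delta \to g\) a.e.\ as \(\delta \to 0\)), or by a direct Fubini estimate of \(\int_{B}\int_{B} |g(x)-g(z)|\,\mathrm dx\,\mathrm dz\).

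Finally I would assemble the pieces: since \(g(x) = c\) for a.e.\ \(x\), we get \(\mathbb{E}[f(x)] = c\) for a.e.\ \(x\); and for any open bounded \(A\), Tonelli gives \(\mathbb{E}\big[\fint_A f\,\mathrm dx\big] = \fint_A g(x)\,\mathrm dx = c\), and likewise \(\mathbb{E}\big[\fint_{B_1} f\,\mathrm dx\big] = c\), which is the asserted chain of equalities. The only genuinely technical points — and the steps I would be most careful with — are the passage from a.e.\ translation invariance of \(g\) to a.e.\ constancy and the control of null sets in \(x\) when interchanging \(\mathbb{E}\) and \(\fint\); neither is deep, so the proof is essentially a bookkeeping exercise built on \textbf{P1} and Tonelli's theorem.
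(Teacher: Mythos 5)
The paper gives no proof of this lemma — it is quoted directly from \cite[Lemma 2.22]{Stefan2018} — and your argument is exactly the standard one behind that citation: the shift relation for \(f\) together with \textbf{P1} (shift invariance of \(\mathbb{P}\)) makes \(x\mapsto\mathbb{E}[f(x)]\) a.e.\ equal to a constant, and Tonelli turns both spatial averages into that constant. Your handling of the only delicate points (the \(y\)-dependent null sets via mollification, and the integrability needed to justify Fubini) is correct, so the proposal is complete.
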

Since localized correctors satisfies stationarity, we can apply Lemma \ref{Lemma 2.22} to them.

\subsection{Estimates for Correctors}
In the study of quantitative homogenization, the primary focus has been on obtaining precise quantitative estimates for correctors. Key contributions in this area include the work of Gloria and Otto \cite{GloriaAntoine2}, who provided quantitative results for the corrector equation, and the higher-order error estimates in weak norms using second-order correctors by Bella et al. \cite{BellaP}. Additionally, Gu and Mourrat \cite{Gu2016} offered pointwise two-scale expansion estimates under finite-range dependence conditions, similar to the quantitative estimates by Armstrong et al. \cite{armstrong2018quantitative, armstrong2019quantitative}.

In this work, we build on those foundational works, particularly leveraging the quantitative estimates of correctors from \cite{Fischer2021}, where the first-order corrector is analyzed in various \(L^p\) norms. This forms the foundation of our analysis.

\begin{lemma}\cite[Lemma 32]{Fischer2021} \label{lemma32}
    Let \(\epsilon > 0\), \(m \geq 2\), and \(J \geq 0\), with \(d \geq 3\). Let \(u = u(\omega_\epsilon, x)\) be a random field satisfying the estimates
    \begin{equation*}
        \mathbb{E}\left[\left(\fint_{B_\epsilon(x_0)}\left|\nabla u\right|^2 \, \mathrm{d}x\right)^{m/2}\right]^{1/m} \leq J,
    \end{equation*}
    and 
    \begin{equation*}
        \mathbb{E}\left[\left(\int_{\mathbb{R}^d}\nabla u \cdot g \, \mathrm{d}x\right)^m \right]^{1/m} \leq J\left(\frac{\epsilon}{r}\right)^{d/2},
    \end{equation*}
    for all \(r \geq 2\epsilon\), \(x_0 \in \mathbb{R}^d\), and vector fields \(g: \mathbb{R}^d \to \mathbb{R}^d\) supported in \(B_r(x_0)\) that satisfy
    \begin{equation}\label{ggg}
        \left(\fint_{B_r(x_0)}|g|^{2+1/d} \, \mathrm{d}x\right)^{1/(2+1/d)} \leq r^{-d}.
    \end{equation}
   Then, for all \(r \geq \epsilon\) and \(2 \leq p \leq 2d/(d-2)\), the following holds:
    \begin{equation*}
        \mathbb{E}\left[\left(\fint_{B_r(x_0)}\left| u-\fint_{B_r(x_0)}u\right|^p \, \mathrm{d}x\right)^{m/p}\right]^{1/m} \leq C J\epsilon,
    \end{equation*}
    with $C$ only depending on $d, p$.
\end{lemma}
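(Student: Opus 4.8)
The plan is to reduce to the Sobolev endpoint exponent and then split the oscillation of $u$ into a microscopic part, governed by the first assumed bound, and a macroscopic part, governed by the second through a dyadic telescoping in the radius. Since the averaged norm $\big(\fint_{B_r(x_0)}|\cdot|^{p}\big)^{1/p}$ is nondecreasing in $p$, it suffices to prove the estimate for $p=2d/(d-2)$, the range $2\le p<2d/(d-2)$ then following by Jensen's inequality. Writing $v(x):=\fint_{B_\epsilon(x)}u\,\mathrm{d}\tilde x$ for the mollification of $u$ at the microscopic scale and $\overline u_{r}:=\fint_{B_r(x_0)}u$, the triangle inequality gives
\[
\Big(\fint_{B_r(x_0)}\big|u-\overline u_{r}\big|^{p}\Big)^{1/p}
\le \Big(\fint_{B_r(x_0)}|u-v|^{p}\Big)^{1/p}
+\Big(\fint_{B_r(x_0)}\big|v-\overline u_{r}\big|^{p}\Big)^{1/p},
\]
and one estimates the two terms separately, combining the $m$-th stochastic moments with the spatial averages at the end through Jensen's and Minkowski's inequalities.

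For the microscopic term one covers $B_r(x_0)$ by a finite-overlap family $\{B_\epsilon(x_i)\}_{i\in I}$ with $|I|\sim(r/\epsilon)^{d}$ and, on each $B_\epsilon(x_i)$, replaces $v$ by $\fint_{B_\epsilon(x_i)}u$ up to an error of the same type. The Sobolev--Poincaré inequality at scale $\epsilon$, available precisely because $p\le 2d/(d-2)$, then yields $\fint_{B_\epsilon(x_i)}|u-v|^{p}\lesssim \epsilon^{p}\big(\fint_{B_{2\epsilon}(x_i)}|\nabla u|^{2}\big)^{p/2}$. Averaging over $i$, taking $m$-th moments and invoking the first assumed bound $\mathbb{E}\big[(\fint_{B_\epsilon(x_i)}|\nabla u|^{2})^{m/2}\big]^{1/m}\le J$ gives $\mathbb{E}\big[(\fint_{B_r(x_0)}|u-v|^{p})^{m/p}\big]^{1/m}\lesssim J\epsilon$.

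For the macroscopic term one exploits that $v-\overline u_{r}$ only involves averages of $u$ over balls of radius at least $\epsilon$, so the second assumed bound applies at every relevant scale. Fixing $x\in B_r(x_0)$ and choosing $N$ with $2^{N}\epsilon\sim r$, telescope
\[
v(x)-\overline u_{r}
=\sum_{n=0}^{N-1}\Big(\fint_{B_{2^{n}\epsilon}(x)}u-\fint_{B_{2^{n+1}\epsilon}(x)}u\Big)
+\Big(\fint_{B_{2^{N}\epsilon}(x)}u-\overline u_{r}\Big).
\]
An increment $\fint_{B_\rho(y)}u-\fint_{B_{\rho'}(y')}u$ with $\rho\sim\rho'$ and $|y-y'|\lesssim\rho$ equals $-\int_{\mathbb{R}^{d}}\nabla u\cdot g\,\mathrm{d}x$, where $g$, supported in a ball $B_{C\rho}(y)$, solves $\nabla\cdot g=h$ with $h$ the mean-zero difference of the two normalized indicator functions; solving this divergence equation by Bogovskii's operator produces $g$ with $\big(\fint_{B_{C\rho}(y)}|g|^{2+1/d}\big)^{1/(2+1/d)}\lesssim\rho^{\,1-d}$, hence $g=\rho\,\tilde g$ with $\tilde g$ admissible in~\eqref{ggg} at scale $C\rho$. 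The second assumed bound then gives, for $\rho\gtrsim\epsilon$,
\[
\mathbb{E}\big[\,\big|\fint_{B_\rho(y)}u-\fint_{B_{\rho'}(y')}u\big|^{m}\,\big]^{1/m}
\lesssim \rho\,J\big(\epsilon/\rho\big)^{d/2}=J\epsilon^{d/2}\rho^{\,1-d/2}.
\]
Summing the dyadic increments by Minkowski's inequality and using $d\ge3$, so that $1-d/2\le-\tfrac12<0$ and $\sum_{n\ge0}(2^{n}\epsilon)^{1-d/2}$ is dominated by its smallest-scale term $\sim\epsilon^{1-d/2}$, one gets $\mathbb{E}\big[|v(x)-\overline u_{r}|^{m}\big]^{1/m}\lesssim J\epsilon^{d/2}\epsilon^{1-d/2}=J\epsilon$, uniformly in $x\in B_r(x_0)$ and in $r\ge\epsilon$; the terminal term is handled identically with $\rho\sim r$, using $r^{1-d/2}\le\epsilon^{1-d/2}$. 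Integrating this pointwise-in-$x$ bound and combining it with the microscopic estimate yields the claim.

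I expect the main obstacle to be keeping the final bound \emph{independent of} $r$: a direct application of the Sobolev--Poincaré inequality on $B_r(x_0)$ would cost a factor $r$, so all the gain must come from the dyadic telescoping, and one has to check that at each scale the test field $g$ respects exactly the normalization~\eqref{ggg} (this is where the exponent $2+1/d$ of the second assumed bound enters) and that the stochastic-moment bookkeeping across the $\sim(r/\epsilon)^{d}$ microscopic balls does not reintroduce an $r$-dependence, which in turn hinges on the summability afforded by $d\ge3$. Pushing the estimate all the way to the Sobolev endpoint $p=2d/(d-2)$ for the full range $m\ge2$ is the most delicate bookkeeping point, and it is there that the two assumed bounds have to be used in tandem rather than separately.
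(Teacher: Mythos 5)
You should first be aware that the paper itself contains no proof of this statement: Lemma \ref{lemma32} is imported verbatim from \cite[Lemma 32]{Fischer2021}, so your proposal can only be measured against the argument in that reference, whose general architecture (a scale-$\epsilon$ part controlled by Sobolev--Poincar\'e and the first hypothesis, plus a coarse part expressed through admissible linear functionals of $\nabla u$ summed over dyadic scales, with $d\ge 3$ providing summability) your sketch does reproduce. The microscopic estimate and the construction of the test fields are sound: the difference of two normalized indicators at scale $\rho$ has normalized $L^{2+1/d}$ norm of order $\rho^{-d}$, the Bogovskii solution $g$ of the divergence problem has normalized $L^{2+1/d}$ norm of order $\rho^{1-d}$, so $g=\rho\tilde g$ with $\tilde g$ admissible, and the dyadic sum $\sum_{n}(2^{n}\epsilon)^{1-d/2}\lesssim\epsilon^{1-d/2}$ indeed yields $\mathbb{E}\big[|v(x)-\overline u_{r}|^{m}\big]^{1/m}\lesssim J\epsilon$ uniformly in $x$ and $r\ge\epsilon$.

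The genuine gap is the final step, ``integrating this pointwise-in-$x$ bound.'' The conclusion requires $\mathbb{E}\big[\big(\fint_{B_r(x_0)}|v-\overline u_r|^{p}\big)^{m/p}\big]^{1/m}\lesssim J\epsilon$ with $p$ as large as $2d/(d-2)$ while $m$ may be only $2$; when $p>m$, Minkowski's integral inequality runs the wrong way, and a uniform bound on $\mathbb{E}[|v(x)-\overline u_r|^{m}]^{1/m}$ does not control the $L^{p}_x$ average inside the $m$-th moment (for normalized averages the $L^{p}_x$ norm dominates the $L^{m}_x$ norm, not conversely). So the step as written fails precisely at the endpoint you flag as ``delicate bookkeeping''; flagging it is not closing it. A standard repair: apply Sobolev--Poincar\'e once more, on each ball $B_\epsilon(x_i)$ of your covering, to $w:=v-\overline u_r$, giving $\big(\fint_{B_\epsilon(x_i)}|w|^{p}\big)^{1/p}\lesssim\big(\fint_{B_\epsilon(x_i)}|w|^{2}\big)^{1/2}+\epsilon\big(\fint_{B_\epsilon(x_i)}|\nabla v|^{2}\big)^{1/2}$. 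The first term is an $L^{2}_x$ average, so Minkowski with exponent $m/2\ge 1$ lets you insert the pointwise bound $\mathbb{E}[|w(x)|^{m}]^{1/m}\lesssim J\epsilon$; for the second, $\nabla v$ is a mollification of $\nabla u$, hence $\fint_{B_\epsilon(x_i)}|\nabla v|^{2}\lesssim\fint_{B_{2\epsilon}(x_i)}|\nabla u|^{2}$ and the first hypothesis supplies the factor $J$, which the prefactor $\epsilon$ turns into $J\epsilon$. Averaging over the covering exactly as you already do for the microscopic term (Jensen when $m\le p$, Minkowski when $m\ge p$) then closes the argument. Two minor points to check as well: every dyadic increment must be tested at a scale at least $2\epsilon$ (take the supporting ball $B_{C\rho}$ with $C\ge 2$), since the second hypothesis is only assumed for $r\ge 2\epsilon$; and your constants (covering multiplicity, Bogovskii) depend only on $d$ and $p$, as the statement requires.
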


Lemma \ref{lemma32} extends the \(L^2\) norm estimate from \cite{BellaP} to \(L^p\) norms. Specifically, \cite[Theorem 6.1]{BellaP} focuses on \(L^2\) norm estimates under the condition \(\epsilon = 1\) for \(d \geq 3\), using results from \cite[Theorem 1 and Theorem 3]{GloriaAntoine}. This extension to \(L^p\) norms allows us to get the estimates for correctors \(\phi_{e_i}^T\) and \(\sigma_i^T\).

\begin{lemma}
Let \(\phi_{e_i}^T\) be the localized corrector defined in Lemma \ref{Localized correctors}. Then, for any \(x_0 \in \mathbb{R}^d\), there exists a constant \(C(d, \lambda) > 0\) such that the following estimates hold:
\begin{equation}\label{phi2}
    \mathbb{E}\left[\fint_{B_1(x_0)} \left|\phi_{e_i}^T\right|^2 \, \mathrm{d}x\right] \leq C \epsilon^2,
\end{equation}
and
\begin{equation}\label{phi22}
    \mathbb{E}\left[\fint_{B_1(x_0)} \left|\phi_{e_i}^T\right|^{2d/(d-2)} \, \mathrm{d}x\right] \leq C \epsilon^{2d/(d-2)}.
\end{equation}
Similarly, for the localized flux corrector \(\sigma_i^T\) defined in Lemma \ref{Localized flux correctors}, we have
\begin{equation}\label{sigma2}
    \mathbb{E}\left[\fint_{B_1(x_0)} \left|\sigma_i^T\right|^2 \, \mathrm{d}x\right] \leq C \epsilon^2.
\end{equation}
\end{lemma}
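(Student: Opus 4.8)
## Proof Proposal

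The plan is to derive all three bounds by combining the localized corrector equations with the quantitative input from Lemma \ref{lemma32}, whose hypotheses are known to hold for $\phi_{e_i}^T$ and $\sigma_i^T$ with the choice $J = C$ (a constant depending only on $d,\lambda$), after rescaling. The first step is to verify the two structural hypotheses of Lemma \ref{lemma32} for $u = \phi_{e_i}^T$. The bound $\mathbb{E}[(\fint_{B_\epsilon(x_0)}|\nabla\phi_{e_i}^T|^2)^{m/2}]^{1/m}\le C$ is the standard moment bound on the corrector gradient (stationarity plus the energy estimate $\mathbb{E}[|\nabla\phi_{e_i}^T|^2]\le C$ from Lemma \ref{Localized correctors}, upgraded to higher moments via the spectral gap inequality exactly as in \cite{Fischer2021}). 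The second hypothesis, the dual bound $\mathbb{E}[(\int_{\mathbb{R}^d}\nabla\phi_{e_i}^T\cdot g)^m]^{1/m}\le C(\epsilon/r)^{d/2}$ for admissible test fields $g$ supported in $B_r(x_0)$, is the sensitivity/concentration estimate already established for the localized corrector in \cite[Lemma 18 and its proof]{Fischer2021}: one tests the corrector equation against the solution of an auxiliary dual problem with right-hand side $g$, uses the spectral gap to control the variance, and the $(\epsilon/r)^{d/2}$ decay comes from the correlation length $\epsilon$ against the support scale $r$.

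Once these hypotheses are in place, applying Lemma \ref{lemma32} with $r = 1$ and $x_0$ arbitrary gives, for every $2\le p\le 2d/(d-2)$,
\begin{equation*}
\mathbb{E}\left[\left(\fint_{B_1(x_0)}\left|\phi_{e_i}^T-\fint_{B_1(x_0)}\phi_{e_i}^T\right|^p\,\mathrm{d}x\right)^{m/p}\right]^{1/m}\le C\epsilon.
\end{equation*}
Taking $m=2$ and $p=2$ yields \eqref{phi2} up to handling the mean $\fint_{B_1(x_0)}\phi_{e_i}^T$; taking $m = 2d/(d-2)$ (or simply $m=2$ with $p=2d/(d-2)$ and then bounding the $L^{m/p}$ moment appropriately) yields \eqref{phi22}. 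The subtlety is that Lemma \ref{lemma32} controls only oscillation around the local average, whereas \eqref{phi2}–\eqref{phi22} assert control of $\phi_{e_i}^T$ itself. Here I would invoke the normalization $\fint_{B_1}\phi_{e_i}^T(\omega_\epsilon,\cdot)=0$ together with stationarity of $\nabla\phi_{e_i}^T$: by Lemma \ref{Lemma 2.22} and a telescoping/Poincaré argument along a chain of unit balls connecting $B_1(x_0)$ to $B_1(0)$, the mean $\fint_{B_1(x_0)}\phi_{e_i}^T$ is itself controlled in $L^2(\Omega)$ by $\|\nabla\phi_{e_i}^T\|$-type quantities at scale $\epsilon$, which are $O(\epsilon)$. (Alternatively, one uses that $\phi_{e_i}^T$ has stationary gradient and the localization term $\tfrac1T\phi_{e_i}^T$ forces $\mathbb{E}[(\phi_{e_i}^T)^2]<\infty$, so that $\mathbb{E}[\fint_{B_1(x_0)}|\phi_{e_i}^T|^2]$ is $x_0$-independent and equals $\mathbb{E}[\fint_{B_1}|\phi_{e_i}^T|^2]\le C\epsilon^2$.)

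For the flux corrector bound \eqref{sigma2}, the strategy is identical: $\sigma_i^T$ solves $-\Delta\sigma_{ijk}^T+\tfrac1T\sigma_{ijk}^T=\partial_j q_{ik}^T-\partial_k q_{ij}^T$, and since $q_i^T=\mathbf{a}^\epsilon(\nabla\phi_{e_i}^T+e_i)$ has stationary, finite-second-moment fluctuations controlled by the corrector estimates, the same two hypotheses of Lemma \ref{lemma32} hold for $u=\sigma_{ijk}^T$ with $J=C$ — the dual bound again following from \cite[Lemma 18]{Fischer2021} — giving $\mathbb{E}[\fint_{B_1(x_0)}|\sigma_i^T-\fint_{B_1(x_0)}\sigma_i^T|^2]\le C\epsilon^2$, and the mean is handled as before using $\fint_{B_1}\sigma_i^T=0$. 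I expect the main obstacle to be precisely this passage from oscillation control to absolute control of the correctors: one must be careful that the constants do not degenerate as $T\to\infty$, which is why the localization term and the finite-second-moment normalization are essential, and why Lemma \ref{Lemma 2.22} (applicable thanks to stationarity of the localized correctors) is the right tool to pin down the spatial averages uniformly in $x_0$.
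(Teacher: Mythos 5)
Your overall skeleton coincides with the paper's: verify the two hypotheses of Lemma \ref{lemma32} for $\nabla\phi^T_{e_i}$ (the moment bound and the dual/sensitivity bound, which the paper imports from \cite{Fischer2021}), apply the lemma at scale $r=1$ with $p=m=2$ and $p=m=2d/(d-2)$, and repeat for $\sigma^T_i$. The gap is in the step you yourself flag as the main obstacle: passing from the oscillation bound to the absolute bounds \eqref{phi2}--\eqref{sigma2}. Your anchor is the normalization $\fint_{B_1}\phi^T_{e_i}=0$ (and $\fint_{B_1}\sigma^T_i=0$), but this normalization belongs to the \emph{non-localized} correctors of Definition \ref{first order} and Definition \ref{flux-corrector}; the localized correctors of Lemmas \ref{Localized correctors}--\ref{Localized flux correctors} are defined through the massive term $\frac1T\phi^T_{e_i}$ and carry no such vanishing-average condition, so the quantity $\fint_{B_1(x_0)}\phi^T_{e_i}$ is exactly what still has to be estimated. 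Your two fallbacks do not close this. The telescoping/Poincar\'e chain from $B_1(x_0)$ to $B_1(0)$ controls differences of averages by $\Vert\nabla\phi^T_{e_i}\Vert_{L^2}$ on the connecting balls, and those gradient quantities are only $O(1)$ per unit ball (that is what the first hypothesis of Lemma \ref{lemma32} says), not $O(\epsilon)$; a pathwise chain therefore yields a bound growing with $|x_0|$ rather than the required $C\epsilon$. The alternative in your parenthesis is circular: stationarity does make $\mathbb{E}\bigl[\fint_{B_1(x_0)}|\phi^T_{e_i}|^2\bigr]$ independent of $x_0$, but asserting that it "equals $\mathbb{E}[\fint_{B_1}|\phi^T_{e_i}|^2]\le C\epsilon^2$" assumes the very estimate to be proved.

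What is actually needed is an anchor that kills the average itself, and this is where the paper departs from you: it invokes, for $d\ge 3$, the relation $\lim_{R\to\infty}\fint_{B_R(x_0)}\phi^T_{e_i}\,\mathrm{d}x=0$, so that the mean over $B_1(x_0)$ is recovered from the oscillation estimates across scales $r\ge\epsilon$ (the quantitative bookkeeping being deferred to \cite[Proposition~19]{Fischer2021}), and argues identically for $\sigma^T_i$. If you prefer not to use the decay of large-scale averages, a correct substitute would be to show $\mathbb{E}[\phi^T_{e_i}]=0$ (take expectations in the massive equation, using stationarity of the flux) and then estimate the fluctuation $\mathbb{E}\bigl[\bigl(\fint_{B_1(x_0)}\phi^T_{e_i}\bigr)^2\bigr]$ by a separate spectral-gap argument; but as written, neither of your proposed mechanisms produces the $O(\epsilon^2)$ bound, so the proof of \eqref{phi2}, \eqref{phi22} and \eqref{sigma2} is incomplete at precisely this point. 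A minor additional remark: for \eqref{phi22} you must take $m=p=2d/(d-2)$ (as the paper does); the variant "$m=2$ with $p=2d/(d-2)$" gives a moment of order $m/p<1$ and does not bound $\mathbb{E}\bigl[\fint_{B_1(x_0)}|\phi^T_{e_i}|^{2d/(d-2)}\bigr]$.
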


\begin{proof}
By applying \cite[Lemma 26 and Lemma 48]{Fischer2021}, we deduce that, for any \(x_0 \in \mathbb{R}^d\), there exists a constant \(C(d,\lambda)<\infty\) such that
\begin{equation*}
  \mathbb{E}\left[\left(\fint_{B_\epsilon(x_0)} \left|\nabla \phi_{e_i}^T\right|^2 \, \mathrm{d}x \right)^{m/2} \right]^{1/m} \leq C.  
\end{equation*}

Let \(F\) denote
\begin{equation*}
    F(\omega_\epsilon) := \int_{\mathbb{R}^d} g \cdot \nabla\phi^T_{e_i} \, \mathrm{d}x,
\end{equation*}
where \(g \in L^{2+1/d}(\mathbb{R}^d;\mathbb{R}^d)\) is a deterministic vector field, satisfying (\ref{ggg}).
From \cite[Lemma 24 and Lemma 26]{Fischer2021}, we have, for \(d \geq 3\) and any \(m \geq 2\),
\begin{equation*}
    \mathbb{E}[|F|^{m}]^{1/m} \leq C(d,\lambda) \left(\frac{\epsilon}{r}\right)^{d/2}.
\end{equation*}
Then, by applying Lemma \ref{lemma32}, for all \(r \geq \epsilon\), \(x_0 \in \mathbb{R}^d\), and \(2 \leq p \leq 2d/(d-2)\), we obtain
\begin{equation*}
    \mathbb{E}\left[\left(\fint_{B_r(x_0)}\left| \phi^T_{e_i}-\fint_{B_r(x_0)} \phi^T_{e_i} \right|^p \, \mathrm{d}x\right)^{m/p}\right]^{1/m} \leq C \epsilon.
\end{equation*}
By using the relation \(\underset{R \to \infty}{\mathrm{lim}} \fint_{B_R(x_0)} \phi^T_{e_i} \, \mathrm{d}x = 0\) for the case \(d \geq 3\), and by setting \(r = 1\) and \(x_0 = 0\), and considering cases \(p = m = 2\) and \(p = m = 2d/(d-2)\), we obtain the estimates (\ref{phi2}) and (\ref{phi22}).

For the flux corrector \(\sigma_i^T\), a similar argument applies, we arrive at estimate (\ref{sigma2}). For a detailed proof, see \cite[Proposition 19]{Fischer2021}.
\end{proof}




\section{Main Estimates}\label{sec 3}

In this section, we focus on estimating the correctors.

\begin{lemma}\label{phi sigma}
Let \(\mathcal{O}\) denote either \(\mathbb{R}^d\) or a bounded set in \(\mathbb{R}^d\). Let \(u_0\) denote the homogenized solution in \(\mathcal{O}\), as defined in Theorem \ref{R space} for the entire space \(\mathbb{R}^d\) or in Theorem \ref{qualitative result} for bounded domains. Let \(\phi_{e_i}\) denote the first-order corrector defined in Definition \ref{first order}, and let \(\sigma_i\) be defined as in Definition \ref{flux-corrector}. We have
\begin{equation}\label{phi2 sigma}
    \Vert \phi_{e_{i}} \nabla(\partial_{i} {u}_0) \Vert^2_{L^{2}(\mathcal{O})}+\Vert \phi_{e_{i}} \partial_{i} {u}_0 \Vert^2_{L^{2}(\mathcal{O})}+\Vert \sigma_i \nabla(\partial_{i} {u}_0) \Vert^2_{L^{2}(\mathcal{O})} \leq \mathcal{C}(\omega_\epsilon)\epsilon^2 \Vert \nabla u_0 \Vert_{H^1(\mathcal{O})}^2,
\end{equation}
and
\begin{equation}\label{2d d-2}
    \Vert  \phi_{e_{i}}  \partial_{i} {u}_0 \Vert_{L^{2d/(d-2)}(\mathbb{R}^d)} \leq \mathcal{C}(\omega_\epsilon)\epsilon \Vert \nabla u_0 \Vert_{L^{2d/(d-2)}(\mathbb{R}^d)},
\end{equation}
 where \(\mathcal{C}(\omega_\epsilon)\) is a random constant such that there exists a constant \(C(d, \lambda)\) satisfying \( \mathbb{E}[\mathcal{C}(\omega_\epsilon)] \leq C \).
\end{lemma}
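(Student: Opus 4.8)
The plan is to prove the three bounds in \eqref{phi2 sigma} and the bound \eqref{2d d-2} by transferring the pointwise-in-$\omega_\epsilon$ corrector estimates \eqref{phi2}, \eqref{phi22}, \eqref{sigma2} (proved for the localized correctors $\phi_{e_i}^T$, $\sigma_i^T$) to the genuine correctors $\phi_{e_i}$, $\sigma_i$ via the $L^2(\Omega\times B_r)$ convergence $(\nabla\phi^T_{e_i},\nabla\sigma^T_i)\to(\nabla\phi_{e_i},\nabla\sigma_i)$ recalled after Lemma \ref{Localized flux correctors}, together with the sublinearity and mean-zero normalizations. Concretely, I first observe that by stationarity of the correctors and Lemma \ref{Lemma 2.22}, the estimates $\mathbb{E}\bigl[\fint_{B_1(x_0)}|\phi_{e_i}|^2\,\mathrm dx\bigr]\le C\epsilon^2$, $\mathbb{E}\bigl[\fint_{B_1(x_0)}|\phi_{e_i}|^{2d/(d-2)}\,\mathrm dx\bigr]\le C\epsilon^{2d/(d-2)}$, and $\mathbb{E}\bigl[\fint_{B_1(x_0)}|\sigma_i|^2\,\mathrm dx\bigr]\le C\epsilon^2$ hold with $x_0$-independent constants (the $T\to\infty$ limit preserves these bounds by Fatou, since the $\phi^T$ normalization $\fint_{B_R}\phi^T\to0$ matches that of $\phi$). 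These are the only inputs I need.

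For the first three terms, I would cover $\mathcal O$ (or all of $\mathbb R^d$) by a locally finite family of unit balls $\{B_1(x_k)\}_k$ with bounded overlap, write
\[
\Vert \phi_{e_i}\nabla(\partial_i u_0)\Vert_{L^2(\mathcal O)}^2 \le \sum_k \int_{B_1(x_k)\cap\mathcal O} |\phi_{e_i}|^2\,|\nabla(\partial_i u_0)|^2\,\mathrm dx,
\]
and on each ball split off the oscillation of $\nabla(\partial_i u_0)$ from its average; more efficiently, I would instead work directly with the quantity $\mathcal C(\omega_\epsilon):=\epsilon^{-2}\sup_{k}\fint_{B_1(x_k)}|\phi_{e_i}|^2$ (and the analogous suprema for the other two correctors, plus the $L^{2d/(d-2)}$ version), which dominates each local contribution, giving
\[
\Vert \phi_{e_i}\nabla(\partial_i u_0)\Vert_{L^2(\mathcal O)}^2 \le \mathcal C(\omega_\epsilon)\,\epsilon^2 \sum_k \int_{B_1(x_k)}|\nabla(\partial_i u_0)|^2\,\mathrm dx \lesssim \mathcal C(\omega_\epsilon)\,\epsilon^2\Vert\nabla u_0\Vert_{H^1(\mathcal O)}^2,
\]
using bounded overlap and $\Vert\nabla(\partial_i u_0)\Vert_{L^2}\le\Vert\nabla u_0\Vert_{H^1}$. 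The terms with $\sigma_i\nabla(\partial_i u_0)$ and $\phi_{e_i}\partial_i u_0$ are handled identically (for the latter using $\Vert\partial_i u_0\Vert_{L^2}\le\Vert\nabla u_0\Vert_{H^1}$ in the bounded case, and in the full-space case noting $u_0\in H^2\cap W^{1,\infty}$ already controls $\Vert\partial_i u_0\Vert_{L^2}$ — actually one only needs $\nabla u_0\in H^1$, which holds). For \eqref{2d d-2}, I use Hölder on each $B_1(x_k)$ with exponents matched so that $\phi_{e_i}$ enters through its $L^{2d/(d-2)}$ norm and $\partial_i u_0$ through its $L^{2d/(d-2)}$ norm, i.e. $\int_{B_1(x_k)}|\phi_{e_i}\partial_i u_0|^{2d/(d-2)} \le (\fint_{B_1(x_k)}|\phi_{e_i}|^{q})^{\cdots}(\int_{B_1(x_k)}|\partial_i u_0|^{2d/(d-2)})$ — here a small $\ell^p$/Hölder juggling is needed, but the cleanest route is to take $\mathcal C(\omega_\epsilon)$ to also bound $\epsilon^{-2d/(d-2)}\sup_k\fint_{B_1(x_k)}|\phi_{e_i}|^{2d/(d-2)}$ and sum $\sum_k\int_{B_1(x_k)}|\partial_i u_0|^{2d/(d-2)}\lesssim\Vert\nabla u_0\Vert_{L^{2d/(d-2)}}^{2d/(d-2)}$.

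The one genuinely delicate point is the bound $\mathbb E[\mathcal C(\omega_\epsilon)]\le C$: $\mathcal C$ is defined as a \emph{supremum over the countably many balls} $B_1(x_k)$, and the estimates \eqref{phi2}–\eqref{sigma2} only control $\mathbb E$ of each individual local average, not the supremum. The main obstacle is therefore to show this supremum has bounded expectation. I would handle it by a union-bound / stochastic-integrability argument: the local averages $\fint_{B_1(x_k)}|\phi_{e_i}|^m\,\mathrm dx$ have bounded moments of all orders $m$ uniformly in $k$ (this follows from \cite[Lemma 26 and Lemma 48]{Fischer2021}, exactly as in the proof of \eqref{phi2} but keeping a general $m$), so each has stretched-exponential-type integrability; since there are at most $O(R^d)$ centers in $B_R$ and the total number of relevant balls in $\mathbb R^d$ is countable, the supremum over all $k$ is finite a.s.\ and has bounded expectation after absorbing a harmless polylog factor into the constant, or — more cleanly for a bounded domain $\mathcal O$ — the sum is finite so one may replace the supremum by the sum and use $\mathbb E\bigl[\sum_k\fint_{B_1(x_k)}|\phi_{e_i}|^2\bigr]\le C|\mathcal O|\epsilon^2$ directly, which already yields \eqref{phi2 sigma} with $\mathbb E[\mathcal C]\le C$. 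For the full-space statement, since $\nabla u_0$ is compactly... not necessarily, but $\nabla u_0\in H^1(\mathbb R^d)$ means the weights $\int_{B_1(x_k)}|\nabla(\partial_i u_0)|^2$ are summable, so I would instead set $\mathcal C(\omega_\epsilon):=\epsilon^{-2}\sum_k w_k^{-1}\,\mathbf 1\{\fint_{B_1(x_k)}|\phi_{e_i}|^2>w_k\epsilon^2\}\cdot(\cdots)$ — in practice the honest fix is to define $\mathcal C$ as a suitably weighted sum of the local averages (weights summable against $\Vert\nabla u_0\Vert_{H^1}^2$), take expectations inside, and invoke stationarity; I expect the author does exactly this, and the rest is bookkeeping.
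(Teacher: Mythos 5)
There is a genuine gap in your main route. You define \(\mathcal{C}(\omega_\epsilon)\) as \(\epsilon^{-2}\sup_k \fint_{B_1(x_k)}|\phi_{e_i}|^2\) over a countable cover of \(\mathbb{R}^d\) and claim \(\mathbb{E}[\mathcal{C}]\le C\) via a union bound with "a harmless polylog factor". This does not work in the full-space case: the supremum of a stationary family of local averages indexed by all of \(\iota\mathbb{Z}^d\)-type lattices is (under ergodicity) the essential supremum of a single local average, which is generically infinite unless the averages are almost surely bounded — a property not provided by \eqref{phi2}--\eqref{sigma2}. A union bound only yields logarithmically growing control on bounded regions, and there is no finite region to log over here; moreover, the stretched-exponential integrability you invoke requires quantified growth of the \(m\)-th moments in \(m\), which you have not established. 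Your closing hedge — define \(\mathcal{C}\) through a weighted sum of local averages with weights summable against \(\Vert\nabla u_0\Vert^2_{H^1}\), take the expectation inside, and use stationarity — is indeed the correct fix, but it is left as an assertion ("the rest is bookkeeping") rather than carried out, and it is not bookkeeping: it is the entire proof.

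For comparison, the paper's argument needs no covering, no supremum, and no stochastic-integrability input beyond the second (and \(2d/(d-2)\)-th) moment bounds. It works throughout with the \emph{localized} correctors \(\phi^T_{e_i},\sigma^T_i\) of Lemma \ref{Localized correctors}, precisely because these are stationary fields, so that by Fubini and Lemma \ref{Lemma 2.22} one may replace \(\mathbb{E}\bigl[|\phi^T_{e_i}(x)|^2\bigr]\) pointwise by \(\mathbb{E}\bigl[\fint_{B_1}|\phi^T_{e_i}|^2\bigr]\) inside the deterministic weight \(|\nabla(\partial_i u_0)|^2\) (resp. \(|\partial_i u_0|^2\), \(|\nabla u_0|^{2d/(d-2)}\)), then apply \eqref{phi2}, \eqref{phi22}, \eqref{sigma2}, and only at the very end pass to the limit \(T\to\infty\). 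Since the lemma only asserts \(\mathbb{E}[\mathcal{C}(\omega_\epsilon)]\le C\), the random constant is simply the ratio of the left-hand side to \(\epsilon^2\Vert\nabla u_0\Vert^2_{H^1(\mathcal{O})}\) (resp. \(\epsilon\Vert\nabla u_0\Vert_{L^{2d/(d-2)}}\)), so an expectation bound on the full weighted integral is all that is required — the "delicate point" you identified dissolves. Note also that your plan to first transfer the ball-averaged bounds to \(\phi_{e_i}\) and then invoke stationarity is shakier than you suggest: \(\phi_{e_i}\) itself is not stationary (only \(\nabla\phi_{e_i}\) is, per Definition \ref{first order}), so Lemma \ref{Lemma 2.22} cannot be applied to \(|\phi_{e_i}|^2\) directly; this is exactly why the paper keeps the massive term until the last step.
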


\begin{proof}
We begin by considering localized corrector \(\phi^T_{e_i}\) as defined in Lemma \ref{Localized correctors}. Notably, it is the stationarity of \(\phi^T_{e_{i}}\) that allows us to invoke Lemma \ref{Lemma 2.22}. By using the Cauchy–Schwarz inequality, Lemma \ref{Lemma 2.22}, and the estimate given in (\ref{phi2}), we have
\begin{align*} 
\mathbb{E}\left[\int_{\mathcal{O}}\left|\phi^T_{e_{i}} \nabla(\partial_{i} {u}_0) \right|^2 \, \mathrm{d}x\right] &\leq \int_{\mathcal{O}} \sum_{i=1}^{d} \mathbb{E}\left[\left|\phi^T_{e_{i}}\right|^2 \right] \sum_{i=1}^{d} \left|\nabla(\partial_{i} {u}_0)\right|^2 \, \mathrm{d}x \nonumber\\
    &= \int_{\mathcal{O}}  \sum_{i=1}^{d} \mathbb{E}\left[\fint_{B_1}\left|\phi^T_{e_{i}}\right|^2 \, \mathrm{d}y \right]\left|\nabla^2 u_0\right|^2 \, \mathrm{d}x\nonumber\\
    &\leq C\epsilon^2 \Vert \nabla^2 u_0 \Vert_{L^2(\mathcal{O})}^2,
\end{align*}
and
\begin{align*}
    \mathbb{E}\left[\int_{\mathcal{O}}\left|\phi^T_{e_{i}} \partial_{i} {u}_0\right|^2 \, \mathrm{d}x\right] \leq C\epsilon^2 \Vert \nabla u_0 \Vert_{L^2(\mathcal{O})}^2.
\end{align*}

Similarly, by applying (\ref{phi22}), we obtain
\begin{align*}
    \mathbb{E}\left[\int_{\mathbb{R}^d}\left|\phi^T_{e_{i}} (\partial_{i} {u}_0) \right|^{2d/(d-2)} \, \mathrm{d}x\right]
    \leq& \int_{\mathbb{R}^d} \sum_{i=1}^{d} \mathbb{E}\left[\fint_{B_1}\left|\phi^T_{e_{i}}\right|^{2d/(d-2)} \, \mathrm{d}y \right]\left|\nabla u_0\right|^{2d/(d-2)} \, \mathrm{d}x\nonumber\\
    \leq& C\epsilon^{2d/(d-2)} \Vert \nabla u_0 \Vert_{L^{2d/(d-2)}(\mathbb{R}^d)}^{2d/(d-2)}.
\end{align*}

The analysis for \(\sigma_i\) follows similarly. Applying (\ref{sigma2}), we get
\begin{align*}
    \mathbb{E}\left[\int_{\mathcal{O}}\left|\sigma^T_i \nabla(\partial_{i} {u}_0) \right|^2 \, \mathrm{d}x\right] &\le \int_{\mathcal{O}} \sum_{i=1}^{d} \mathbb{E}\left[\left|\sigma^T_i\right|^2 \right]\left|\nabla(\partial_{i} {u}_0) \right|^2 \, \mathrm{d}x\nonumber\\
    &= \int_{\mathcal{O}}\sum_{i=1}^{d} \mathbb{E}\left[\fint_{B_1}\left|\sigma^T_i\right|^2 \, \mathrm{d}y \right]\left|\nabla^2 u_0\right|^2 \, \mathrm{d}x\nonumber\\
    &\leq C\epsilon^2 \Vert \nabla^2 u_0 \Vert_{L^2(\mathcal{O})}^2.
\end{align*}
Finally, we take the limit \(T \to \infty\) in these estimates to obtain (\ref{phi2 sigma}) and (\ref{2d d-2}).
\end{proof}

 To extend the analysis to scenarios involving lower-order terms, it is essential to examine the derivatives of \(\phi_{e_i}\) in detail. In the next Lemma, we provide estimates for these derivatives, which are crucial for understanding the behavior of correctors influenced by lower-order terms.

\begin{lemma}\label{lemma 26}
Let \(\phi_{e_i}\) be the first-order corrector as defined in Definition \ref{first order}. For \(x \in \mathbb{R}^d\) and \(\epsilon \in (0, 1]\), we have
\begin{align*}
    \mathbb{E}\left[\fint_{B_{\epsilon}(x)}\left|e_i+\nabla \phi_{e_i}\right|^2 \, \mathrm{d}y\right] \leq C,
\end{align*}
and
\begin{align}\label{3.2}
    \mathbb{E}\left[\fint_{B_\epsilon(x)} \left|\partial_{\omega_\epsilon}\nabla\phi_{e_i}\right|^2 \, \mathrm{d}y\right]
    \leq C,
\end{align}
where \(C< \infty\) is a constant depending only on $d,\lambda$.
\end{lemma}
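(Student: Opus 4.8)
The plan is to establish both bounds via standard energy estimates for the corrector equation, combined with stationarity (Lemma \ref{Lemma 2.22}), using the localized correctors $\phi^T_{e_i}$ as an intermediary exactly as in the proof of Lemma \ref{phi sigma}. First I would record the zeroth-order bound. Testing the localized corrector equation $-\nabla\cdot(\mathbf{a}^\epsilon(e_i+\nabla\phi^T_{e_i}))+\tfrac{1}{T}\phi^T_{e_i}=0$ against $\phi^T_{e_i}$ itself over $\mathbb{R}^d$ gives, after using the ellipticity lower bound \textbf{A1} and absorbing the cross term via Young's inequality, the uniform estimate $\mathbb{E}[|\nabla\phi^T_{e_i}|^2]\le C(d,\lambda)$ (this is implicit in Lemma \ref{Localized correctors} and the cited \cite[Lemma 26]{Fischer2021}). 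Since $\nabla\phi^T_{e_i}$ is stationary, Lemma \ref{Lemma 2.22} converts this into $\mathbb{E}[\fint_{B_\epsilon(x)}|\nabla\phi^T_{e_i}|^2\,\mathrm{d}y]\le C$ for every $x$, whence $\mathbb{E}[\fint_{B_\epsilon(x)}|e_i+\nabla\phi^T_{e_i}|^2\,\mathrm{d}y]\le 2+2C$; letting $T\to\infty$ and using the $L^2_{loc}$ convergence of $\nabla\phi^T_{e_i}\to\nabla\phi_{e_i}$ recorded after Lemma \ref{Localized flux correctors} yields the first claim.

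For the second bound, the idea is to differentiate the corrector equation with respect to $\omega_\epsilon$. Working at the level of $\phi^T_{e_i}$ (whose existence, uniqueness and smooth dependence on $\omega_\epsilon$ come from the lower-order term $\tfrac1T\phi^T$ and assumptions \textbf{A1}, \textbf{A3}), apply the Fréchet derivative $\partial_{\omega_\epsilon}$ in a direction $\delta\omega_\epsilon$ to obtain
\begin{equation*}
-\nabla\cdot\bigl(\mathbf{a}^\epsilon\,\nabla(\partial_{\omega_\epsilon}\phi^T_{e_i})\bigr)+\tfrac{1}{T}\partial_{\omega_\epsilon}\phi^T_{e_i}
= \nabla\cdot\bigl((\partial_{\omega_\epsilon}\mathbf{a}^\epsilon)(e_i+\nabla\phi^T_{e_i})\bigr).
\end{equation*}
Testing this equation against $\partial_{\omega_\epsilon}\phi^T_{e_i}$, using ellipticity \textbf{A1} on the left and Cauchy–Schwarz together with the bound $|\partial_{\omega_\epsilon}\mathbf{a}^\epsilon|\le\lambda$ from \textbf{A3} on the right, gives
\begin{equation*}
\mathbb{E}\bigl[|\nabla(\partial_{\omega_\epsilon}\phi^T_{e_i})|^2\bigr]\le \lambda^2\,\mathbb{E}\bigl[|e_i+\nabla\phi^T_{e_i}|^2\bigr]\le C(d,\lambda),
\end{equation*}
using the zeroth-order bound just established. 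Again $\nabla(\partial_{\omega_\epsilon}\phi^T_{e_i})$ inherits stationarity from $\phi^T_{e_i}$, so Lemma \ref{Lemma 2.22} upgrades this to the local average $\mathbb{E}[\fint_{B_\epsilon(x)}|\partial_{\omega_\epsilon}\nabla\phi^T_{e_i}|^2\,\mathrm{d}y]\le C$, and passing $T\to\infty$ delivers \eqref{3.2}.

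The main obstacle is making the differentiation step rigorous: one must justify that $\omega_\epsilon\mapsto\phi^T_{e_i}$ is genuinely Fréchet-differentiable in the relevant function spaces, that the derivative $\partial_{\omega_\epsilon}\phi^T_{e_i}$ solves the claimed linearized equation in the distributional sense with the stated right-hand side, and that it retains the finite second moments and stationarity needed to invoke Lemma \ref{Lemma 2.22}. This is where the localization parameter $T$ earns its keep — the coercive zeroth-order term $\tfrac1T$ makes the linearized operator boundedly invertible on $H^1(\mathbb{R}^d)$ with uniform-in-realization constants, so the implicit function theorem applies and the energy estimate closes without circularity; the $\epsilon$-independence of the constant then comes for free because the corrector equation itself does not see $\epsilon$ (only the law of $\omega_\epsilon$ does, through the spectral gap), and the $\fint_{B_\epsilon}$ localization is supplied purely by stationarity.
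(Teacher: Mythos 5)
Your proposal is correct in its core mechanism, and it is the same mechanism as the paper's: differentiate the corrector equation with respect to $\omega_\epsilon$, test with the derivative itself, use \textbf{A1} for coercivity and \textbf{A3} to control the source term $(\partial_{\omega_\epsilon}\mathbf{a}^\epsilon)(e_i+\nabla\phi_{e_i})$, absorb via Young/Cauchy--Schwarz, and feed in the zeroth-order bound $\mathbb{E}\left[\fint_{B_\epsilon(x)}|e_i+\nabla\phi_{e_i}|^2\,\mathrm{d}y\right]\leq C$, which in both arguments rests on stationarity through Lemma \ref{Lemma 2.22} together with (\ref{corrector 3}). Where you diverge is the implementation: the paper works directly with the original corrector of Definition \ref{first order} and runs the energy estimate pathwise and locally on $B_\epsilon(x)$ (with a cutoff), so it only ever needs stationarity of $\nabla\phi_{e_i}$ and never touches the $T$-regularization; you route everything through the localized corrector $\phi^T_{e_i}$ with a global, expectation-level estimate and then localize by stationarity, passing $T\to\infty$ at the end. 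Your route is closer to how such sensitivity bounds are actually derived in \cite{Fischer2021} and avoids the paper's somewhat informal cutoff/integration-by-parts step, but it incurs two extra obligations that you should discharge explicitly: (i) to invoke Lemma \ref{Lemma 2.22} for the derivative you need that $\partial_{\omega_\epsilon}\nabla\phi^T_{e_i}$ (an operator-valued field) is itself stationary, i.e.\ shift-covariant jointly in $x$ and in the perturbation $\delta\omega_\epsilon$ --- you flag Fréchet differentiability as the obstacle but not this covariance property, which the paper's pathwise-local estimate never requires; and (ii) the final limit $T\to\infty$ needs convergence (or at least weak convergence plus lower semicontinuity of the $L^2$ norm) of the $\omega_\epsilon$-derivatives, whereas the convergence recorded after Lemma \ref{Localized flux correctors} is only for $(\nabla\phi^T_{e_i},\nabla\sigma^T_i)$, not for their derivatives in $\omega_\epsilon$. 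Neither point is fatal, but both require an argument; the paper sidesteps them precisely by differentiating (\ref{corrector}) rather than the localized equation.
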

\begin{proof}
Since $\nabla \phi_{e_i}$ is stationary, by applying Lemma \ref{Lemma 2.22} and from (\ref{corrector 3}), we obtain
\begin{align}\label{stationary}
\mathbb{E}\left[\fint_{B_{\epsilon}(x)}\left|e_i+\nabla \phi_{e_i}\right|^2\mathrm{d}y\right] &= \mathbb{E}\left[\fint_{B_{\epsilon}(x)}\left(|e_i|^2+2e_i \nabla \phi_{e_i}+|\nabla \phi_{e_i}|^2\right) \mathrm{d}y\right]\nonumber\\
&= 1 + \mathbb{E}\left[|\nabla \phi_{e_i}|^2 \right]\nonumber\\
&\le C.
\end{align}

Next we prove (\ref{3.2}). Differentiating both sides of equation (\ref{corrector}) with respect to \(\omega_\epsilon\), we obtain
\begin{equation*}
-\nabla \cdot \left(( \partial_{\omega_\epsilon}\mathbf{a}^\epsilon)\left( e_i+ \nabla \phi_{e_i} \right) +\mathbf{a}^\epsilon\nabla (\partial_{\omega_\epsilon} \phi_{e_i}) \right) = 0.
\end{equation*}
Multiplying by the test function \(\eta\partial_{\omega_\epsilon} \phi_{e_i}\) and integrating over \(B_\epsilon(x)\), we obtain
\begin{equation*}
\fint_{B_\epsilon(x)} \left(-\nabla \cdot \left( ( \partial_{\omega_\epsilon}\mathbf{a}^\epsilon) \left( e_i+ \nabla \phi_{e_i} \right) +\mathbf{a}^\epsilon\nabla (\partial_{\omega_\epsilon} \phi_{e_i}) \right) \right) \eta\partial_{\omega_\epsilon} \phi_{e_i} \, \mathrm{d}y = 0,
\end{equation*}
where \(\eta \in C_0^\infty(B_\epsilon(x))\) is a smooth function with compact support, \(\eta\) equals to 1 inside \(B_\epsilon(x)\) and zero on the boundary. By using integration by parts, we have
\begin{equation}\label{transf}
\fint_{B_\epsilon(x)} \left((\partial_{\omega_\epsilon}\mathbf{a}^\epsilon)\left( e_i+ \nabla \phi_{e_i} \right) \cdot \nabla (\partial_{\omega_\epsilon} \phi_{e_i}) + \mathbf{a}^\epsilon\nabla (\partial_{\omega_\epsilon} \phi_{e_i}) \cdot \nabla (\partial_{\omega_\epsilon} \phi_{e_i})\right) \, \mathrm{d}y = 0.
\end{equation}
Through transformation of (\ref{transf}) and by using assumption \textbf{A3}, we have
\begin{align}\label{transff}
\fint_{B_\epsilon(x)}\mathbf{a}^\epsilon\nabla (\partial_{\omega_\epsilon} \phi_{e_i}) \cdot \nabla (\partial_{\omega_\epsilon} \phi_{e_i}) \, \mathrm{d}y 
= &-\fint_{B_\epsilon(x)} (\partial_{\omega_\epsilon}\mathbf{a}^\epsilon) \left( e_i+ \nabla \phi_{e_i} \right) \cdot \nabla (\partial_{\omega_\epsilon} \phi_{e_i}) \, \mathrm{d}y\nonumber\\
\le  &\left| \fint_{B_\epsilon(x)} (\partial_{\omega_\epsilon}\mathbf{a}^\epsilon) \left( e_i+ \nabla \phi_{e_i} \right) \cdot \nabla (\partial_{\omega_\epsilon} \phi_{e_i}) \, \mathrm{d}y \right| \nonumber\\
\leq  & \lambda \fint_{B_\epsilon(x)} |e_i+ \nabla \phi_{e_i}| |\nabla (\partial_{\omega_\epsilon} \phi_{e_i})| \, \mathrm{d}y.
\end{align}
For the left-hand side of (\ref{transff}), using assumption \textbf{A1}, we have
\begin{equation}\label{transff2}
 \fint_{B_\epsilon(x)} |\nabla (\partial_{\omega_\epsilon} \phi_{e_i})|^2 \, \mathrm{d}y \le \fint_{B_\epsilon(x)}\mathbf{a}^\epsilon\nabla (\partial_{\omega_\epsilon} \phi_{e_i}) \cdot \nabla (\partial_{\omega_\epsilon} \phi_{e_i}) \, \mathrm{d}y.
\end{equation}
Combing (\ref{transff}) and (\ref{transff2}) and applying Young's inequality, we have
\begin{align}\label{3.29}
    \fint_{B_\epsilon(x)} |\nabla (\partial_{\omega_\epsilon} \phi_{e_i})|^2 \, \mathrm{d}y &\leq \lambda\fint_{B_\epsilon(x)} |e_i+ \nabla \phi_{e_i}| |\nabla (\partial_{\omega_\epsilon} \phi_{e_i})| \, \mathrm{d}y \nonumber\\
     &\leq \frac{\lambda}{2}\fint_{B_\epsilon(x)} |e_i+ \nabla \phi_{e_i}|^2 \, \mathrm{d}y + \frac{1}{2}\fint_{B_\epsilon(x)} |\nabla (\partial_{\omega_\epsilon} \phi_{e_i})|^2 \, \mathrm{d}y.
\end{align}
Hence, from (\ref{3.29}), we have
\begin{align}\label{3.30}
    \fint_{B_\epsilon(x)} |\nabla (\partial_{\omega_\epsilon} \phi_{e_i})|^2 \, \mathrm{d}y
     \leq C\fint_{B_\epsilon(x)} |e_i+ \nabla \phi_{e_i}|^2 \, \mathrm{d}y.
\end{align}
Taking the expectation of (\ref{3.30}) and using (\ref{stationary}), we have
\begin{align*}
\mathbb{E}\left[\fint_{B_\epsilon(x)} \left|\partial_{\omega_\epsilon}\nabla\phi_{e_i}\right|^2 \, \mathrm{d}y\right]\le C
       \mathbb{E}\left[\fint_{B_{\epsilon}(x)}\left|e_i+\nabla \phi_{e_i} \right|^2 \, \mathrm{d}y\right] \leq C.
\end{align*}
This complete the proof of (\ref{3.2}).
\end{proof}

Set 
\begin{equation}\label{gamma}
     \Gamma_i := \mathbf{b}^{\epsilon} \left(e_i + \nabla \phi_{e_i}\right) - \bar{\mathbf{b}}e_i,
\end{equation}
where \(\mathbf{b}^{\epsilon}\) and \(\bar{\mathbf{b}}\) are coefficient defined as in Theorem \ref{R space}, \(\phi_{e_i}\) is the first-order corrector defined in Definition \ref{first order}. The investigation of \(\Gamma_i\) is central to our study, as it captures the effect of lower-order terms. By analyzing \(\Gamma_i\), we aim to clarify how these lower-order terms influence the convergence rate in the homogenization process.

\begin{lemma}\label{pi test l2}
Let \(\Gamma_i\) be as in (\ref{gamma}) and let \(\mathcal{O}\) be defined as Lemma \ref{phi sigma}. Then we have
\begin{align}\label{3.32}
      \|\partial_i {u}_0  \nabla \phi_{e_{i}} \|_{L^{2}(\mathcal{O})}
    \leq \mathcal{C} (\omega_\epsilon) \Vert \nabla u_0 \Vert_{L^{2}(\mathcal{O})},
\end{align}
and 
\begin{equation}\label{gama l2 t}
    \Vert \Gamma_i\partial_{i} u_0 \Vert_{L^{2}(\mathcal{O})}
    \leq  \mathcal{C}(\omega_\epsilon) \Vert \nabla u_0 \Vert_{L^2(\mathcal{O})},
\end{equation}
 where \(\mathcal{C}(\omega_\epsilon)\) is a random constant such that there exists a constant \(C(d, \lambda, K) > 0\) satisfying \( \mathbb{E}[\mathcal{C}(\omega_\epsilon)] \leq C \).
\end{lemma}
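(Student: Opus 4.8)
The plan is to prove both estimates by the same mechanism: express the $L^2$-norm over $\mathcal{O}$ as an integral against $|\nabla u_0|^2$, bound the oscillating factor pointwise by a local average over a ball of radius $\epsilon$, and then absorb all the stochastic information into a single random constant $\mathcal{C}(\omega_\epsilon)$ whose expectation is controlled via Lemma~\ref{lemma 26} and Lemma~\ref{Lemma 2.22}. Concretely, for \eqref{3.32} I would first note that $\nabla\phi_{e_i}$ is stationary, so the quantity $g_i(x):=\fint_{B_\epsilon(x)}|e_i+\nabla\phi_{e_i}|^2\,\mathrm{d}y$ is a stationary random field with $\mathbb{E}[g_i(x)]\le C$ by the first estimate of Lemma~\ref{lemma 26}. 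The key point is that $|\nabla\phi_{e_i}(x)|^2$ itself need not be locally bounded by its $\epsilon$-average, so I would instead work with the function $u_0$ mollified at scale $\epsilon$, or more simply use that $\partial_i u_0\in W^{1,\infty}$ and split $\mathcal{O}$ into a grid of $\epsilon$-cubes $\{Q_k\}$; on each $Q_k$ the factor $|\partial_i u_0|^2$ is comparable to its supremum, which is comparable (using $\nabla u_0\in W^{1,\infty}$, or just by the localization-within-small-grids device advertised in the introduction) to the cube average of $|\nabla u_0|^2$. Summing the contributions of the cubes and setting
\[
\mathcal{C}(\omega_\epsilon):=C\sup_k \fint_{B_{C\epsilon}(x_k)}\bigl|e_i+\nabla\phi_{e_i}\bigr|^2\,\mathrm{d}y
\]
(interpreted with an appropriate normalization so it is dimensionless), one reads off \eqref{3.32} with $\mathbb{E}[\mathcal{C}(\omega_\epsilon)]\le C(d,\lambda)$ from stationarity and Lemma~\ref{Lemma 2.22}. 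In the entire-space case the cube sum is an honest convergent sum because $\nabla u_0\in H^1\cap W^{1,\infty}$, and in the bounded case the sum is finite.

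For \eqref{gama l2 t}, I would expand $\Gamma_i=\mathbf{b}^\epsilon(e_i+\nabla\phi_{e_i})-\bar{\mathbf{b}}e_i$ and use Assumption~\textbf{A4}: $|\mathbf{b}^\epsilon|\le K$ pointwise, and $|\bar{\mathbf b}|\le K$ as well since $\bar{\mathbf b}e_i=\mathbb{E}[\mathbf{b}^\epsilon(e_i+\nabla\phi_{e_i})]$ together with $\mathbb{E}[\nabla\phi_{e_i}]=0$ and Cauchy--Schwarz (or directly $|\bar{\mathbf b}e_i|\le K\,\mathbb{E}[|e_i+\nabla\phi_{e_i}|^2]^{1/2}\le CK$). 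Hence
\[
|\Gamma_i|\le K\bigl|e_i+\nabla\phi_{e_i}\bigr| + CK \le CK\bigl(1+|\nabla\phi_{e_i}|\bigr),
\]
so $\|\Gamma_i\partial_i u_0\|_{L^2(\mathcal{O})}\lesssim \|\partial_i u_0\|_{L^2(\mathcal{O})} + \|\partial_i u_0\,\nabla\phi_{e_i}\|_{L^2(\mathcal{O})}$, and the second term is exactly what \eqref{3.32} already controls. This gives \eqref{gama l2 t} with a random constant of the same type, now depending on $K$ as well, which accounts for the stated dependence $C(d,\lambda,K)$.

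The main obstacle, and the step that requires genuine care rather than bookkeeping, is the passage from the pointwise product $|\partial_i u_0(x)|^2\,|\nabla\phi_{e_i}(x)|^2$ to something expressible through the $\epsilon$-mollified corrector energy $\fint_{B_\epsilon(x)}|e_i+\nabla\phi_{e_i}|^2$: the corrector gradient is only an $L^2_{\mathrm{loc}}$ object with no pointwise bounds, so one cannot naively replace $|\nabla\phi_{e_i}(x)|^2$ by its $\epsilon$-average. The localization-within-small-grids technique fixes this by exploiting that $\partial_i u_0$ varies slowly at scale $\epsilon$ (since $\nabla u_0\in W^{1,\infty}$, $|\partial_i u_0(x)-\partial_i u_0(y)|\le \|\nabla^2 u_0\|_{L^\infty}\epsilon$ for $|x-y|\le\epsilon$), so that $\int_{Q_k}|\partial_i u_0|^2|\nabla\phi_{e_i}|^2 \le \bigl(\sup_{Q_k}|\partial_i u_0|^2\bigr)\int_{Q_k}|\nabla\phi_{e_i}|^2$ and the supremum is absorbed into the cube average of $|\nabla u_0|^2$ up to lower-order $\epsilon$-terms that are harmless here (we are not chasing the sharp power of $\epsilon$ in this lemma, only boundedness of $\mathcal{C}(\omega_\epsilon)$ in expectation). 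Once this localization is in place, taking expectations and invoking stationarity via Lemma~\ref{Lemma 2.22} together with the bound $\mathbb{E}[|\nabla\phi_{e_i}|^2]<\infty$ from \eqref{corrector 3} closes both estimates.
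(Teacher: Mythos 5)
There is a genuine gap, and it sits exactly at the step you flag as the crux. You define
\[
\mathcal{C}(\omega_\epsilon):=C\sup_k \fint_{B_{C\epsilon}(x_k)}\bigl|e_i+\nabla\phi_{e_i}\bigr|^2\,\mathrm{d}y
\]
and claim \(\mathbb{E}[\mathcal{C}(\omega_\epsilon)]\le C(d,\lambda)\) ``from stationarity and Lemma~\ref{Lemma 2.22}''. Stationarity only gives a uniform bound on the expectation of \emph{each individual} cube average; it says nothing about the expectation of the supremum over the grid. In \(\mathbb{R}^d\) the grid has infinitely many cubes, and even in a bounded domain it has order \(\epsilon^{-d}\) cubes, so \(\mathbb{E}[\sup_k(\cdot)]\) is in general unbounded (or at best grows with the number of cubes) unless one proves strong concentration with high moments — which your argument does not do, and which is not needed for this lemma. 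A secondary issue: your localization step uses \(\nabla u_0\in W^{1,\infty}\) to compare \(\sup_{Q_k}|\partial_i u_0|^2\) with the cube average of \(|\nabla u_0|^2\), but the lemma must also cover the bounded-domain case of Theorem~\ref{qualitative result}, where only \(u_0\in H^2(\mathcal{O})\) is assumed, and in the whole-space case the error terms \(\epsilon^2\|\nabla^2 u_0\|_{L^\infty}^2|Q_k|\) do not sum over infinitely many cubes.

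The reason the difficulty you worry about (no pointwise control of \(|\nabla\phi_{e_i}|\)) is not actually an obstacle here is that the statement only asserts a bound with a random constant of bounded expectation, not an almost-sure pointwise or supremum bound. The paper's proof is therefore much shorter: since \(u_0\) is deterministic, take the expectation of the squared norm and use Fubini,
\[
\mathbb{E}\Bigl[\Vert \Gamma_i\partial_{i} u_0 \Vert^2_{L^{2}(\mathcal{O})}\Bigr]
\le \int_{\mathcal{O}}\Bigl(\sum_{i=1}^{d}\mathbb{E}\bigl[\Gamma_i^2\bigr]\Bigr)|\nabla u_0|^2 \, \mathrm{d}x ,
\]
then use stationarity of \(\nabla\phi_{e_i}\) so that \(\mathbb{E}[\Gamma_i^2]\) is a constant in \(x\), bounded by \(C(d,\lambda,K)\) via assumption \textbf{A4} and \(\mathbb{E}[|e_i+\nabla\phi_{e_i}|^2]<\infty\); the random constant \(\mathcal{C}(\omega_\epsilon)\) is then just the ratio \(\Vert \Gamma_i\partial_i u_0\Vert^2_{L^2(\mathcal{O})}/(C\Vert\nabla u_0\Vert^2_{L^2(\mathcal{O})})\), whose expectation is bounded by construction; the same argument gives \eqref{3.32}. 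Your reduction of \eqref{gama l2 t} to \eqref{3.32} via \(|\bar{\mathbf b}e_i|\le CK\) is fine, but the grid/supremum mechanism underlying \eqref{3.32} does not close; no localization is needed in this lemma (it is needed later, in Lemma~\ref{pi test}, where the spectral gap is applied to cube averages for each fixed cube separately).
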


\begin{proof}
Since the proof for (\ref{3.32}) is the same to (\ref{gama l2 t}), we only prove (\ref{gama l2 t}) here. Taking the expectation, we have
\begin{equation}\label{gama l2}
    \mathbb{E}\left[ \Vert \Gamma_i\partial_{i} u_0 \Vert^2_{L^{2}(\mathcal{O})}\right]
    \le \int_{\mathcal{O}}\left(\sum_{i=1}^{d}\mathbb{E}\left[\Gamma_i^2 \right]\right)|\nabla u_0|^2 \, \mathrm{d}x.
\end{equation}

By Definition \ref{first order}, we know that
\begin{equation*}
 \mathbb{E}\left[|e_i+\nabla\phi_{e_i}|^2\right]<\infty.   
\end{equation*}
From boundedness assumption \textbf{A4}, we have
\begin{align}\label{3.35}
    \mathbb{E}\left[ \Gamma_{i}^2\right]&=\mathbb{E}\left[\left(\mathbf{b}^{\epsilon}\left(e_{i}+\nabla\phi_{e_i}\right)-\bar{\mathbf{b}}e_{i}\right)^2\right]\nonumber\\
    &=\mathbb{E}\left[\left(\mathbf{b}^{\epsilon}\left(e_{i}+\nabla\phi_{e_i}\right)\right)^2\right]-\left(\bar{\mathbf{b}}e_{i}\right)^2\nonumber\\
    &\leq \mathbb{E}\left[\left(\mathbf{b}^{\epsilon}\left(e_{i}+\nabla\phi_{e_i}\right)\right)^2\right]
    \leq C.
\end{align}
Combine (\ref{gama l2}) and (\ref{3.35}), we have
\begin{equation*}
    \mathbb{E}\left[ \Vert \Gamma_i\partial_{i} u_0 \Vert^2_{L^{2}(\mathcal{O})}\right] \leq C \Vert \nabla u_0 \Vert_{L^2(\mathcal{O})}.
\end{equation*}
This completes the proof of (\ref{gama l2 t}).
\end{proof}

\begin{lemma}\label{two-scale} 
Let coefficients be defined as in Theorem \ref{R space}. Let \(u_0\) be as in Lemma \ref{phi sigma},  and let \(\phi_{e_i}\) be the first-order corrector defined in Definition \ref{first order}. Define the test function \(w^\epsilon\) as follows:
\begin{equation}\label{test function}
    w^{\epsilon} = u_0 + \phi_{e_i} \partial_i u_0 .
\end{equation}
Let \(\mathcal{O}\) be defined as Lemma \ref{phi sigma}. Then, in a distributional sense, we have
\begin{equation}\label{tw}
	-\nabla\cdot(\mathbf{a}^\epsilon\nabla w^\epsilon) + \mathbf{b}^\epsilon \nabla w^\epsilon + \Lambda w^\epsilon = -\nabla\cdot(\mathbf{\bar{a}}\nabla  u_{0}) + \mathbf{\bar{b}}\nabla  u_{0} + \Lambda u_0 + \nabla\cdot \mathcal{R} + r_1 + r_2 \quad \text{in} \ \mathcal{O},
\end{equation}
with the residual term \(\mathcal{R}\) and \(r_1\) satisfying
\begin{align}\label{r contral}
    \Vert \mathcal{R} \Vert^2_{L^{2}(\mathcal{O})} + \Vert r_1 \Vert^2_{L^{2}(\mathcal{O})}
    \lesssim \mathcal{C}(\omega_\epsilon) \epsilon^2 \Vert \nabla u_0 \Vert_{H^1(\mathcal{O})}^2,
\end{align}
  where \(\mathcal{C}(\omega_\epsilon)\) is a random constant such that there exists a constant \(C(d, \lambda, K, \Lambda) > 0\) satisfying \( \mathbb{E}[\mathcal{C}(\omega_\epsilon)] \leq C \), and \(r_2 := \Gamma_i \partial_i u_0\) with \(\Gamma_i\) defined in (\ref{gamma}).
\end{lemma}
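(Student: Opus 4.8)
The plan is to perform a direct two-scale expansion computation, substituting $w^\epsilon = u_0 + \phi_{e_i}\partial_i u_0$ into the operator $-\nabla\cdot(\mathbf{a}^\epsilon\nabla\,\cdot\,) + \mathbf{b}^\epsilon\nabla\,\cdot\, + \Lambda\,\cdot$ and collecting terms according to their order in $\epsilon$. First I would compute $\nabla w^\epsilon = \nabla u_0 + (\nabla\phi_{e_i})\partial_i u_0 + \phi_{e_i}\nabla(\partial_i u_0)$, so that $\mathbf{a}^\epsilon\nabla w^\epsilon = \mathbf{a}^\epsilon(e_i + \nabla\phi_{e_i})\partial_i u_0 + \mathbf{a}^\epsilon\phi_{e_i}\nabla(\partial_i u_0)$. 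Using the flux notation $q_i = \mathbf{a}^\epsilon(e_i + \nabla\phi_{e_i}) - \bar{\mathbf{a}}e_i$ from Definition \ref{flux-corrector}, the leading part becomes $\bar{\mathbf{a}}\nabla u_0 + q_i\partial_i u_0 + \mathbf{a}^\epsilon\phi_{e_i}\nabla(\partial_i u_0)$. Taking the divergence and using the corrector equation \eqref{corrector} (which gives $\nabla\cdot(q_i) = \nabla\cdot(\mathbf{a}^\epsilon(e_i+\nabla\phi_{e_i})) = 0$ so that $\nabla\cdot(q_i\partial_i u_0) = q_i\cdot\nabla(\partial_i u_0)$), together with the skew-symmetry of the flux corrector $\sigma_i$ to rewrite $q_i\cdot\nabla(\partial_i u_0) = \nabla\cdot(\sigma_i\nabla(\partial_i u_0))$ in divergence form (this is the standard identity $q_{ij} = \partial_k\sigma_{ijk}$ up to the localization error, exploiting $\sigma_{ijk} = -\sigma_{ikj}$), I obtain the divergence-form residual $\mathcal{R}$ consisting of terms like $\sigma_i\nabla(\partial_i u_0)$ and $\mathbf{a}^\epsilon\phi_{e_i}\nabla(\partial_i u_0)$.

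Next I would handle the lower-order term: $\mathbf{b}^\epsilon\nabla w^\epsilon = \mathbf{b}^\epsilon(e_i + \nabla\phi_{e_i})\partial_i u_0 + \mathbf{b}^\epsilon\phi_{e_i}\nabla(\partial_i u_0) = \bar{\mathbf{b}}\nabla u_0 + \Gamma_i\partial_i u_0 + \mathbf{b}^\epsilon\phi_{e_i}\nabla(\partial_i u_0)$, using the definition \eqref{gamma} of $\Gamma_i$ and Definition \ref{homogenized a}. The first term reproduces the homogenized lower-order term, the second is precisely $r_2 = \Gamma_i\partial_i u_0$, and the third contributes to $r_1$. Finally, the zeroth-order term: $\Lambda w^\epsilon = \Lambda u_0 + \Lambda\phi_{e_i}\partial_i u_0$, where $\Lambda u_0$ reproduces the corresponding homogenized term and $\Lambda\phi_{e_i}\partial_i u_0$ is absorbed into $r_1$. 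Collecting, $\mathcal{R}$ gathers the terms $\mathbf{a}^\epsilon\phi_{e_i}\nabla(\partial_i u_0) - \sigma_i\nabla(\partial_i u_0)$ (up to constants), and $r_1$ gathers $\mathbf{b}^\epsilon\phi_{e_i}\nabla(\partial_i u_0) + \Lambda\phi_{e_i}\partial_i u_0$ (plus any commutator-type remainders from the localization/skew-symmetry step).

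For the estimate \eqref{r contral}, I would invoke Lemma \ref{phi sigma}: each summand of $\mathcal{R}$ is bounded by $\mathbf{a}^\epsilon$ (or the identity via $\sigma_i$) times $|\phi_{e_i}\nabla(\partial_i u_0)|$ or $|\sigma_i\nabla(\partial_i u_0)|$, and using boundedness $|\mathbf{a}^\epsilon|\le\lambda$ from \textbf{A1}, the $L^2(\mathcal{O})$ norm is controlled by $\|\phi_{e_i}\nabla(\partial_i u_0)\|_{L^2(\mathcal{O})} + \|\sigma_i\nabla(\partial_i u_0)\|_{L^2(\mathcal{O})} \lesssim \mathcal{C}(\omega_\epsilon)\epsilon\|\nabla u_0\|_{H^1(\mathcal{O})}$ by \eqref{phi2 sigma}. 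Similarly, $\|r_1\|_{L^2(\mathcal{O})}$ is controlled by $K\|\phi_{e_i}\nabla(\partial_i u_0)\|_{L^2(\mathcal{O})} + \Lambda\|\phi_{e_i}\partial_i u_0\|_{L^2(\mathcal{O})}$, again bounded by $\mathcal{C}(\omega_\epsilon)\epsilon\|\nabla u_0\|_{H^1(\mathcal{O})}$ via \eqref{phi2 sigma} (with the constant $C$ now depending also on $K,\Lambda$). Squaring gives \eqref{r contral}, and the random constant $\mathcal{C}(\omega_\epsilon)$ is the same (up to a deterministic factor) as the one from Lemma \ref{phi sigma}, so $\mathbb{E}[\mathcal{C}(\omega_\epsilon)]\le C(d,\lambda,K,\Lambda)$.

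The main obstacle I anticipate is the careful bookkeeping in the divergence-form rewriting: justifying the identity $q_i\cdot\nabla(\partial_i u_0) = \nabla\cdot(\sigma_i\nabla(\partial_i u_0))$ rigorously requires the relation $\partial_k\sigma_{ijk} = q_{ij}$ together with the skew-symmetry $\sigma_{ijk} = -\sigma_{ikj}$, and since we are really working with localized correctors $\phi^T_{e_i}, \sigma^T_i$ that only approximate $\phi_{e_i}, \sigma_i$, one must either pass to the limit $T\to\infty$ at the end (as in Lemma \ref{phi sigma}) or argue directly with the non-localized objects using their defining distributional equations. All the resulting terms must be genuinely of the stated form (divergence of an $L^2$ field for $\mathcal{R}$, an $L^2$ function for $r_1$, and the explicit $\Gamma_i\partial_i u_0$ for $r_2$); ensuring no stray term of lower regularity appears — in particular that every occurrence of a bare $\nabla\phi_{e_i}$ without a $\partial_i u_0$ derivative landing on it is either cancelled or reorganized into divergence form — is the delicate part, and is exactly where the flux corrector $\sigma_i$ earns its keep.
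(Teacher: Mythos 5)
Your proposal is correct and follows essentially the same route as the paper: the same two-scale substitution with the flux $q_i$ and $\Gamma_i$, the same rewriting of $\nabla\cdot(q_i\partial_i u_0)$ in divergence form via $\nabla\cdot q_i=0$, $q_{ij}=\partial_k\sigma_{ijk}$ and the skew-symmetry of $\sigma_i$, leading to $\mathcal{R}=(\sigma_i-\mathbf{a}^\epsilon\phi_{e_i})\nabla(\partial_i u_0)$ (up to sign), $r_1=\mathbf{b}^\epsilon\phi_{e_i}\nabla(\partial_i u_0)+\Lambda\phi_{e_i}\partial_i u_0$, $r_2=\Gamma_i\partial_i u_0$, and the same final bound via assumptions \textbf{A1}, \textbf{A4} and Lemma \ref{phi sigma}. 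The point you flag as delicate (localized versus non-localized correctors) is handled in the paper exactly as you suggest: the limit $T\to\infty$ is already taken in Lemma \ref{phi sigma}, so this lemma works directly with $\phi_{e_i},\sigma_i$.
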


\begin{proof}
By substituting \(w^{\epsilon}\) into the left-hand side of (\ref{q1}) for \(u^\epsilon\), and recalling $ q_i = \mathbf{a}^{\epsilon} (\nabla \phi_i + e_i) - \bar{\mathbf{a}} e_i$ defined in Definition \ref{flux-corrector}, we obtain
\begin{align*}
   & (-\nabla\cdot\mathbf{a}^{\epsilon}\nabla+\mathbf{b}^\epsilon\nabla+\Lambda)w^{\epsilon}\nonumber\\
    =& -\nabla\cdot\left(\mathbf{a}^{\epsilon}\left(\phi_{e_{i}} \nabla(\partial_{i} u_0)+\left(e_{i}+\nabla\phi_{e_{i}}\right)\partial_{i} u_0\right)\right)+\mathbf{b}^{\epsilon}\left(\phi_{e_{i}} \nabla(\partial_{i} u_0)+\left(e_{i}+\nabla\phi_{e_{i}}\right)\partial_{i} u_0\right)
  +\Lambda(u_0+\phi_{e_{i}} \partial_{i} u_0 )\nonumber\\
    =& -\nabla\cdot\left(\mathbf{a}^{\epsilon}\phi_{e_{i}}\nabla(\partial_{i} u_0)\right) 
    -\nabla\cdot\left(q_i\partial_{i} u_0+\bar{\mathbf{a}}\nabla u_0\right)+\mathbf{b}^{\epsilon}\phi_{e_{i}} \nabla(\partial_{i} u_0)+\Gamma_i\partial_{i} u_0+\bar{\mathbf{b}}\nabla u_0 
  +\Lambda(u_0+\phi_{e_{i}} \partial_{i} u_0 )\nonumber\\
    =& -\nabla\cdot(\mathbf{\bar{a}}\nabla u_{0})+\mathbf{\bar{b}}\nabla u_{0}+\Lambda u_0-\nabla\cdot\left(\mathbf{a}^{\epsilon}\phi_{e_{i}}\nabla(\partial_{i} u_0)+ q_i\partial_{i} u_0\right)+\mathbf{b}^{\epsilon}\phi_{e_{i}} \nabla(\partial_{i} u_0)
  +\Lambda \phi_{e_{i}} \partial_{i} u_0+\Gamma_i\partial_{i} u_0  \nonumber\\
    =& -\nabla\cdot(\mathbf{\bar{a}}\nabla u_{0})+\mathbf{\bar{b}}\nabla u_{0}+\Lambda u_0+\nabla\cdot \mathcal{R}+r_1+r_2,
\end{align*}
where the residual term \(\mathcal{R} = -\mathbf{a}^{\epsilon} \phi_{e_{i}} \nabla(\partial_{i} u_0) - q_i \partial_{i} u_0\), \(r_1 = \mathbf{b}^{\epsilon} \phi_{e_{i}} \nabla(\partial_{i} u_0) + \Lambda \phi_{e_{i}} \partial_{i} u_0 \), and \(r_2 = \Gamma_i \partial_{i} u_0\). 
Recalling the skew symmetry of \(\sigma_i\) in Definition \ref{flux-corrector}, with \(\nabla\cdot q_i = 0\), we have
\begin{align*}
 \nabla\cdot(q_i\partial_{i} u_0) = q_i\cdot\nabla\partial_{i} u_0 &= (\partial_k \sigma_{ijk}) \partial_j\partial_{i} u_0\nonumber\\
 &= \partial_k (\sigma_{ijk}\partial_j\partial_{i} u_0) - \sigma_{ijk}\partial_k \partial_j\partial_{i} u_0\nonumber\\
 &=-\partial_k (\sigma_{ikj}\partial_j\partial_{i} u_0)\nonumber\\
 &=-\nabla\cdot(\sigma_i\nabla\partial_{i} u_0).
\end{align*}
Therefore, \(\mathcal{R}\) can be expressed as \(\mathcal{R} = (\sigma_i - \mathbf{a}^{\epsilon}\phi_{e_{i}})\nabla(\partial_{i} u_0)\). By applying the triangle inequality and from assumption \textbf{A1}, we have
\begin{align}\label{3.51}
    \Vert \mathcal{R} \Vert^2_{L^{2}(\mathcal{O})} &\leq \Vert \sigma_i \nabla(\partial_{i} u_0) \Vert^2_{L^{2}(\mathcal{O})} + \Vert \mathbf{a}^{\epsilon}\phi_{e_{i}} \nabla(\partial_{i} u_0) \Vert^2_{L^{2}(\mathcal{O})}\nonumber\\
    &\le \Vert \sigma_i \nabla(\partial_{i} u_0) \Vert^2_{L^{2}(\mathcal{O})} + \lambda^2 \Vert \phi_{e_{i}} \nabla(\partial_{i} u_0) \Vert^2_{L^{2}(\mathcal{O})}.
\end{align}
On the other hand, By applying the triangle inequality and from boundedness assumption \textbf{A4}, we have
\begin{align}\label{3.52}
    \Vert r_1 \Vert^2_{L^{2}(\mathcal{O})} &\leq \Vert \mathbf{b}^{\epsilon}\phi_{e_{i}} \nabla(\partial_{i} u_0) \Vert^2_{L^{2}(\mathcal{O})} + \Vert \Lambda\phi_{e_{i}} \partial_{i} u_0 \Vert^2_{L^{2}(\mathcal{O})}\nonumber\\
    &\le K^2\Vert \phi_{e_{i}} \nabla(\partial_{i} u_0) \Vert^2_{L^{2}(\mathcal{O})} + \Lambda^2 \Vert  \phi_{e_{i}} \partial_{i} u_0 \Vert^2_{L^{2}(\mathcal{O})}.
\end{align}
By combining (\ref{3.51}) and (\ref{3.52}), and applying Lemma \ref{phi sigma}, we obtain
\begin{align*}
   &\Vert \mathcal{R} \Vert^2_{L^{2}(\mathcal{O})} + \Vert r_1 \Vert^2_{L^{2}(\mathcal{O})}\nonumber\\
   \le & \max\{1, \lambda^2 + K^2, \Lambda^2\} \left( \Vert \sigma_i \nabla(\partial_i u_0) \Vert^2_{L^{2}(\mathcal{O})} + \Vert \phi_{e_i} \nabla(\partial_i u_0) \Vert^2_{L^{2}(\mathcal{O})}+\Vert  \phi_{e_{i}} \partial_{i} u_0 \Vert^2_{L^{2}(\mathcal{O})} \right) \nonumber\\
   \lesssim & \mathcal{C}(\omega_\epsilon) \epsilon^2 \Vert \nabla u_0 \Vert_{H^1(\mathcal{O})}^2,
\end{align*}
 where \(\mathcal{C}(\omega_\epsilon)\) is defined as in Lemma \ref{phi sigma}. Thus, we obtain (\ref{r contral}).
\end{proof}


When dealing with the complexities introduced by lower-order terms, we localize the analysis to smaller regions by decomposing the domain into cubes. This allows us to apply the Poincaré inequality effectively within these localized regions, providing control over the lower-order terms. Moreover, the spectral gap condition enables us to further manage the contributions of these terms within the local grids. This grid-based approach ensures refined estimates, which are crucial for the overall quantitative analysis of the problem.

Consider the cube:
\begin{equation}\label{BOX}
\Box_\iota := \left(-\dfrac{\iota}{2},\dfrac{\iota}{2}\right)^{d}, \quad (\iota\ge \epsilon).
\end{equation}
For simplicity, we denote \(\Box_1\) by \(\Box\). 
By analyzing within these small cubes \(\Box_\iota\), we can effectively break down the contributions from lower-order terms and ensure that the overall estimates remain tractable.
\begin{lemma}\label{poincare lemma}
Let $
\Box_\iota
$ be defined as in (\ref{BOX}). Let \( v(x) \in L^2(\Box_\iota) \) be a function. There exists a constant \(\mu(\iota)\), depending on the domain size \(\iota\) and the dimension \(d\), such that
\begin{equation}\label{eq:first inequality}
    \Vert v - (v)_\iota \Vert_{L^2(\Box_\iota)} \leq \mu(\iota) \Vert \nabla v \Vert_{L^2(\Box_\iota)}.
\end{equation}
Moreover, for \(v(x) = v_\iota(x/\iota)\) with \(x \in \Box_\iota\), we have
\begin{align}\label{eq:second inequality}
    \Vert v - (v)_\iota \Vert_{L^2(\Box_\iota)} \leq \mu(1) \iota \Vert \nabla v \Vert_{L^2(\Box_\iota)}.
\end{align}
\end{lemma}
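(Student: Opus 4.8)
The plan is to establish the first inequality \eqref{eq:first inequality} as a standard Poincaré--Wirtinger inequality on the cube $\Box_\iota$, and then derive \eqref{eq:second inequality} by a change of variables. For \eqref{eq:first inequality}, I would invoke the classical Poincaré inequality on the bounded connected Lipschitz domain $\Box_\iota = (-\iota/2,\iota/2)^d$: for every $v \in H^1(\Box_\iota)$ (noting that the right-hand side is vacuously infinite, hence the inequality trivially true, if $v \notin H^1$) there is a constant $\mu(\iota)$ such that $\Vert v - (v)_\iota \Vert_{L^2(\Box_\iota)} \le \mu(\iota)\Vert \nabla v\Vert_{L^2(\Box_\iota)}$, where $(v)_\iota := \fint_{\Box_\iota} v\,\mathrm{d}x$. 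This is textbook material (it follows, e.g., from Rellich--Kondrachov compactness and a contradiction argument, or directly from the one-dimensional Poincaré inequality applied slicewise together with Fubini), so I would cite it rather than reprove it.

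The substantive point is the scaling of the constant, i.e.\ passing from $\mu(\iota)$ to $\mu(1)\,\iota$. Here I would make the dependence on $\iota$ explicit via the dilation $x \mapsto x/\iota$, which maps $\Box_\iota$ onto $\Box_1 = \Box$. Given $v$ on $\Box_\iota$, set $\tilde v(y) := v(\iota y)$ for $y \in \Box$; then $(\tilde v)_1 = (v)_\iota$, and by the chain rule $\nabla_y \tilde v(y) = \iota\,(\nabla v)(\iota y)$. Changing variables in the integrals gives
\begin{align*}
\Vert v - (v)_\iota\Vert_{L^2(\Box_\iota)}^2 &= \iota^d \Vert \tilde v - (\tilde v)_1\Vert_{L^2(\Box)}^2, \\
\Vert \nabla v\Vert_{L^2(\Box_\iota)}^2 &= \iota^{d} \iota^{-2}\Vert \nabla_y \tilde v\Vert_{L^2(\Box)}^2 = \iota^{d-2}\Vert \nabla_y \tilde v\Vert_{L^2(\Box)}^2.
\end{align*}
Applying \eqref{eq:first inequality} on $\Box_1$ with constant $\mu(1)$ to $\tilde v$ yields $\Vert \tilde v - (\tilde v)_1\Vert_{L^2(\Box)} \le \mu(1)\Vert\nabla_y \tilde v\Vert_{L^2(\Box)}$; substituting the two displayed identities and taking square roots gives $\iota^{d/2}\Vert v-(v)_\iota\Vert_{L^2(\Box_\iota)}/\iota^{d/2}\le \mu(1)\,\iota^{-1}\iota^{(d-2)/2}\cdot\iota^{1}\Vert\nabla v\Vert_{L^2(\Box_\iota)}/\iota^{(d-2)/2}$, i.e.\ exactly $\Vert v-(v)_\iota\Vert_{L^2(\Box_\iota)} \le \mu(1)\,\iota\,\Vert\nabla v\Vert_{L^2(\Box_\iota)}$. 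The parenthetical hypothesis ``$v(x)=v_\iota(x/\iota)$'' in the statement is just the bookkeeping device identifying $v_\iota$ with my $\tilde v$ above.

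There is no real obstacle here; the only thing to be careful about is the regularity caveat (the inequality is only meaningful, and is proved, for $v \in H^1(\Box_\iota)$, with the $L^2$ statement read as the closure/trivial case otherwise) and the correct powers of $\iota$ in the Jacobian and chain-rule factors — which combine as $\iota^{-2}$ inside the gradient norm to produce precisely one factor of $\iota$ after the square root, matching the claimed linear dependence. I would also remark that $\mu(1)$ depends only on $d$ (and the fixed geometry of the unit cube), consistent with the statement's assertion that $\mu$ depends only on $\iota$ and $d$.
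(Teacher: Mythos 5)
Your proposal is correct and follows essentially the same route as the paper: the standard Poincar\'e--Wirtinger inequality on the cube for \eqref{eq:first inequality}, and the dilation $x\mapsto x/\iota$ mapping $\Box_\iota$ onto $\Box$ for \eqref{eq:second inequality}. If anything, you are more explicit than the paper in tracking the Jacobian factor $\iota^{d}$ and the chain-rule factor $\iota^{-2}$, whose cancellation yields the single factor of $\iota$; the paper's displayed identities suppress these (they cancel in the end), so your bookkeeping is a welcome clarification rather than a deviation.
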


\begin{proof}
The inequality \eqref{eq:first inequality} follows directly from the Poincaré inequality. 

Now, set \(v(x) = v_\iota(x/\iota)\) for \(x \in \Box_\iota\), where \(v_\iota \in L^2(\Box)\) and \(v \in L^2(\Box_\iota)\). We calculate the average of \(v_\iota(x)\) as follows:
\[
\fint_\Box v_\iota(x) \,\mathrm{d}x = \int_\Box v(\iota x) \,\mathrm{d}x = \iota^{-d} \int_{\Box_\iota} v(x) \,\mathrm{d}x = \fint_{\Box_\iota} v(x) \,\mathrm{d}x.
\]
Thus, we can deduce that
\begin{align*}
   \Vert v - (v)_\iota \Vert_{L^2(\Box_\iota)} 
   = \Vert v_\iota - (v_\iota)_1 \Vert_{L^2(\Box)} \leq \mu(1) \Vert \nabla v_\iota \Vert_{L^2(\Box)} = \mu(1) \iota \Vert \nabla v \Vert_{L^2(\Box_\iota)}.
\end{align*}
This concludes the proof of inequality \eqref{eq:second inequality}.
\end{proof}

\begin{lemma}\label{pi test}
Let \(\Gamma_i\) be as in (\ref{gamma}). For any \(z\in \iota \mathbb{Z}^d\) with \(\iota > \epsilon\), we have
\begin{equation}\label{p test}
    \fint_{z+\Box_\iota} \Gamma_i\partial_{i} u_0 \, \mathrm{d}x
    \leq \mathcal{C}(\omega_\epsilon) \epsilon^{d/2} \iota^{-d} \Vert \nabla u_0\Vert_{L^2(z+\Box_\iota)},
\end{equation}
 where \(\mathcal{C}(\omega_\epsilon)\) is defined as in Lemma \ref{pi test l2} and \(\Box_\iota\) is the cube as defined in (\ref{BOX}).
\end{lemma}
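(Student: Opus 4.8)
The plan is to exploit the fact that $\Gamma_i$ has vanishing expectation together with the spectral gap inequality, applied not to the whole space but localized to the single cube $z+\Box_\iota$. First I would fix $z\in\iota\mathbb Z^d$ and introduce the random variable
\[
F(\omega_\epsilon):=\fint_{z+\Box_\iota}\Gamma_i\,\partial_i u_0\,\mathrm dx,
\]
and observe that, since $u_0$ is deterministic and $\mathbb E[\nabla\phi_{e_i}]=0$ (Definition \ref{first order}) while $\bar{\mathbf b}e_i=\mathbb E[\mathbf b^\epsilon(e_i+\nabla\phi_{e_i})]$, we have $\mathbb E[\Gamma_i]=0$ by Lemma \ref{Lemma 2.22}, hence $\mathbb E[F]=0$. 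Therefore $\mathbb E[F^2]=\mathbb E[|F-\mathbb E[F]|^2]$ and the spectral gap inequality of Definition \ref{def:spectral-gap} applies directly to $F$.

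The core step is to estimate the Malliavin-type derivative $\partial F/\partial\omega_\epsilon$ appearing on the right-hand side of the spectral gap inequality. Since $\Gamma_i=\mathbf b^\epsilon(e_i+\nabla\phi_{e_i})-\bar{\mathbf b}e_i$ and $\bar{\mathbf b}e_i$ is a constant, the Fréchet derivative of $\Gamma_i$ with respect to $\omega_\epsilon$ is
\[
\partial_{\omega_\epsilon}\Gamma_i=(\partial_{\omega_\epsilon}\mathbf b^\epsilon)(e_i+\nabla\phi_{e_i})+\mathbf b^\epsilon\,\partial_{\omega_\epsilon}\nabla\phi_{e_i},
\]
so that $|\partial_{\omega_\epsilon}\Gamma_i|\lesssim |e_i+\nabla\phi_{e_i}|+|\partial_{\omega_\epsilon}\nabla\phi_{e_i}|$ by assumptions \textbf{A4}, \textbf{A5}. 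Because $F$ is an average of $\Gamma_i\,\partial_i u_0$ over $z+\Box_\iota$, its localized derivative satisfies
\[
\fint_{B_\epsilon(x)}\Bigl|\tfrac{\partial F}{\partial\omega_\epsilon}\Bigr|\,\mathrm d\tilde x
\lesssim \iota^{-d}\int_{(z+\Box_\iota)\cap B_{C\epsilon}(x)}\fint_{B_\epsilon(x)}\bigl(|e_i+\nabla\phi_{e_i}|+|\partial_{\omega_\epsilon}\nabla\phi_{e_i}|\bigr)\,|\partial_i u_0|\,\mathrm d\tilde x\,\mathrm dy,
\]
which in particular vanishes unless $x$ lies within distance $C\epsilon$ of $z+\Box_\iota$. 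Plugging this into the spectral gap bound, using Cauchy--Schwarz to separate the deterministic factor $\nabla u_0$, and then taking expectation, the stationarity-based moment bounds $\mathbb E\!\left[\fint_{B_\epsilon(x)}|e_i+\nabla\phi_{e_i}|^2\right]\le C$ and $\mathbb E\!\left[\fint_{B_\epsilon(x)}|\partial_{\omega_\epsilon}\nabla\phi_{e_i}|^2\right]\le C$ from Lemma \ref{lemma 26} turn the random integrand into a constant. The factor $\epsilon^d$ in front of the spectral gap inequality, combined with the $\iota^{-d}$ from the averaging and a single power of $|z+\Box_\iota|^{1/2}=\iota^{d/2}$ that is absorbed when passing from $\|\nabla u_0\|_{L^2}$ on a slightly enlarged cube back to $z+\Box_\iota$, is what produces the claimed $\epsilon^{d/2}\iota^{-d}\|\nabla u_0\|_{L^2(z+\Box_\iota)}$; the random constant $\mathcal C(\omega_\epsilon)$ collects the (bounded-in-expectation) fluctuations from Lemma \ref{lemma 26} and absorbs the bound so that \eqref{p test} holds pointwise in $\omega_\epsilon$.

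The main obstacle I anticipate is the bookkeeping of the localization of $\partial F/\partial\omega_\epsilon$: one must check carefully that perturbing $\omega_\epsilon$ on $B_\epsilon(x)$ only affects $\Gamma_i$ — and through it $\nabla\phi_{e_i}$ — in a way that is square-integrable and correctly supported, and that integrating the square of the localized derivative over $x\in\mathbb R^d$ reproduces an $L^2$ norm of $\nabla u_0$ over $z+\Box_\iota$ (up to the $C\epsilon$-neighborhood, which is harmless for $\iota>\epsilon$ since it only inflates the constant). A secondary technical point is that $\phi_{e_i}$ itself is the non-localized corrector, for which the derivative estimate \eqref{3.2} was proved; one either works with $\phi_{e_i}^T$ and passes to the limit $T\to\infty$ as in the proof of Lemma \ref{phi sigma}, or invokes the convergence $\nabla\phi_{e_i}^T\to\nabla\phi_{e_i}$ in $L^2(\Omega\times B_r)$ to transfer the bound. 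Everything else is Cauchy--Schwarz, Lemma \ref{Lemma 2.22}, and the elementary scaling of averages.
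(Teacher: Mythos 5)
Your proposal follows essentially the same route as the paper's proof: you set $F=\fint_{z+\Box_\iota}\Gamma_i\,\partial_i u_0$, use $\mathbb{E}[F]=0$ (from Definition \ref{homogenized a}) to apply the spectral gap inequality, bound the Fréchet derivative of $F$ via \textbf{A4}--\textbf{A5} together with the moment bounds of Lemma \ref{lemma 26}, restrict the $x$-integration to an $O(\iota)$-neighborhood of the cube, and conclude by packaging the resulting second-moment bound into the random constant $\mathcal{C}(\omega_\epsilon)$. The localization subtlety for $\partial_{\omega_\epsilon}\nabla\phi_{e_i}$ that you flag is treated in exactly the same (brief) way in the paper, so your attempt matches the paper's argument.
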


\begin{proof}
Let \(F\) denote
\begin{equation*}
    F(\omega_\epsilon) := \fint_{z+\Box_\iota } \Gamma_i\partial_{i} u_0 \, \mathrm{d}x.
\end{equation*}
By Definition \ref{homogenized a}, we have \(\mathbb{E}[F(\omega_\epsilon)] = 0\). By assumption \textbf{P2}, we get 
\begin{equation}\label{patial omega phi00}
    \mathbb{E}[|F|^{2}] = \mathbb{E}[|F-\mathbb{E}[F]|^{2}]
    \leq C\epsilon^d\mathbb{E}\left[\left|\int_{\mathbb{R}^d}\left(\fint_{B_\epsilon(x)}\left|\frac{\partial F}{\partial \omega_\epsilon}\right| \, \mathrm{d}y \right)^2 \, \mathrm{d}x\right|\right].
\end{equation}
If \(B_\epsilon(x) \cap (z + \Box_\iota) \neq \emptyset\), by using (\ref{frech}) and assumptions \textbf{A4} and \textbf{A5}, we have
\begin{align}\label{derivetive b}
     \fint_{B_\epsilon(x)} \left|\frac{\partial F}{\partial \omega_\epsilon}\right| \, \mathrm{d}y
    = & \sup_{\delta\omega_\epsilon} \limsup_{t \to 0} \frac{|F(\omega_\epsilon + t\delta\omega_\epsilon) - F(\omega_\epsilon)|}{t}\nonumber\\
    = &\sup_{\delta\omega_\epsilon} \limsup_{t \to 0} \frac{\left|\fint_{z + \Box_\iota} \left(\Gamma_i(\omega_\epsilon + t\delta\omega_\epsilon)- \Gamma_i(\omega_\epsilon)  \right) \partial_i u_0 \, \mathrm{d}y \right|}{t}\nonumber\\
    \le & \sup_{\delta\omega_\epsilon} \fint_{z + \Box_\iota} \left|(\partial_{\omega_\epsilon} \mathbf{b}(\omega_\epsilon) ) \left(e_i+ \nabla \phi_{e_i}\right) \delta\omega_\epsilon \partial_i u_0 + \mathbf{b}(\omega_\epsilon) (\partial_{\omega_\epsilon} \nabla \phi_{e_i}) \delta\omega_\epsilon \partial_i u_0\right| \, \mathrm{d}y\nonumber\\
    \leq & \lambda \iota^{-d} \int_{z + \Box_\iota \cap B_\epsilon(x)}  \left|\left(e_i+ \nabla \phi_{e_i}\right) \partial_i u_0\right| \, \mathrm{d}y + K \iota^{-d} \int_{z + \Box_\iota \cap B_\epsilon(x)} |(\partial_{\omega_\epsilon} \nabla \phi_{e_i} ) \partial_i u_0| \, \mathrm{d}y\nonumber\\
    := & T_1+T_2.
\end{align}
Applying the Cauchy–Schwarz inequality and Lemma \ref{lemma 26}, we have 
\begin{align}\label{patial omega phi0}
    \mathbb{E}\left[\left|\int_{\mathbb{R}^d} T_1 ^2 \, \mathrm{d}x\right|\right]
    \lesssim & \int_{\mathbb{R}^d} \sum_{i=1}^{d} \mathbb{E}\left[\iota^{-d}\int_{z+\Box_\iota \cap B_\epsilon(x)} \left|e_i+ \nabla \phi_{e_i}\right|^2 \, \mathrm{d}y\right] \iota^{-d}\int_{z+\Box_\iota \cap B_\epsilon(x)} \left|\nabla u_0\right|^2 \, \mathrm{d}y \, \mathrm{d}x \nonumber\\
    \lesssim & \int_{\mathbb{R}^d}  \sum_{i=1}^{d} \mathbb{E}\left[\fint_{B_\epsilon(x)} \left|e_i+ \nabla \phi_{e_i}\right|^2 \, \mathrm{d}y\right] \iota^{-d}\int_{z+\Box_\iota \cap B_\epsilon(x)} \left|\nabla u_0\right|^2 \, \mathrm{d}y \, \mathrm{d}x \nonumber\\
    \lesssim  &  \int_{B_{3/2\iota}(z)}\int_{z+\Box_\iota} \iota^{-d} \left|\nabla u_0\right|^2 \, \mathrm{d}y \, \mathrm{d}x\nonumber\\
    \lesssim  &  \iota^{-2d}\Vert \nabla u_0 \Vert^2_{L^2(z+\Box_\iota)}.
\end{align}
Indeed, for the last two inequalities in (\ref{patial omega phi0}), we examine the relationship between the regions \(z + \Box_\iota\) and \(B_\epsilon(x)\). Observe that if \(|z - x| \leq \frac{1}{2} \iota + \epsilon\), then the intersection \(z + \Box_\iota \cap B_\epsilon(x) \neq \emptyset\). This indicates that the region \(B_\epsilon(x)\) overlaps with \(z + \Box_\iota\), and hence the integration region is non-empty. Note that since \(\iota > \epsilon\), the inequality \(|z - x| \leq \frac{1}{2} \iota + \epsilon\) implies that \(x\) lies within a ball centered at \(z\) with radius \(\frac{1}{2}\iota + \epsilon\). This ball can be denoted as \(B_{\frac{1}{2}\iota + \epsilon}(z)\). Furthermore, since \(\frac{1}{2}\iota + \epsilon \leq \frac{3}{2}\iota\), it follows that \(B_{\frac{1}{2}\iota + \epsilon}(z) \subset B_{\frac{3}{2}\iota}(z)\).
Similarly, applying the Cauchy–Schwarz inequality and Lemma \ref{lemma 26} again, we have
\begin{align}\label{patial omega phi1}
    \mathbb{E}\left[\left|\int_{\mathbb{R}^d} T_2 ^2 \, \mathrm{d}x\right|\right]
    \lesssim & \int_{B_{3/2\iota}(z)}  \sum_{i=1}^{d}  \mathbb{E}\left[\fint_{B_\epsilon(x)} \left|\partial_{\omega_\epsilon} \nabla \phi_{e_i}\right|^2 \, \mathrm{d}y\right] \int_{z+\Box_\iota} \iota^{-d} \left| \nabla u_0\right|^2 \, \mathrm{d}y \, \mathrm{d}x \nonumber\\
    \lesssim &  \iota^{-2d}\sum_{i=1}^{d} \Vert \partial_i u_0\Vert^2_{L^2(z+\Box_\iota)} \nonumber\\
    = & C \iota^{-2d}\Vert \nabla u_0\Vert^2_{L^2(z+\Box_\iota)},
\end{align}
where $C$ only depend on $d, \lambda, K$. For the first inequality in (\ref{patial omega phi1}), similar to the previous case in (\ref{patial omega phi0}), the integral region can be restricted to \(B_{3/2\iota}(z)\) due to the relationship between \(z\), \(\iota\), and \(\epsilon\). 
Substituting (\ref{derivetive b})-(\ref{patial omega phi1}) into (\ref{patial omega phi00}), we obtain
\begin{equation*}
    \mathbb{E}[|F|^{2}] \leq C \epsilon^d \iota^{-2d}\Vert \nabla u_0\Vert^2_{L^2(z+\Box_\iota)}.
\end{equation*}
Consequently, by applying the inequality for the expectation, we have
\begin{equation*}
    \left(\fint_{z+\Box_\iota } \Gamma_i\partial_{i} u_0 \, \mathrm{d}x \right)^2
    \leq \mathcal{C}(\omega_\epsilon) \epsilon^d \iota^{-2d} \Vert \nabla u_0 \Vert_{L^2(z+\Box_\iota)}^2,
\end{equation*}
where \(\mathcal{C}(\omega_\epsilon)\) is a random constant that satisfies \(\mathbb{E}[\mathcal{C}(\omega_\epsilon)] \leq C\) for some constant \(C(d, \lambda, K) > 0\). This completes the proof of (\ref{p test}).
\end{proof}


\section{Proofs of Main Theorems}\label{sec 4}

In the previous sections, we developed crucial preparatory results, including the corrector estimates and the two-scale expansion, which are fundamental for analyzing the behavior of the solutions. These estimates, particularly those concerning the lower-order terms, are central to our approach. In the proof of the main theorem, we address the challenges posed by these lower-order terms by introducing a localization technique. 

With these foundational results in place, we now turn to the proof of the main theorems, wherein we derive the desired convergence rate.

\begin{proof}[Proof of Theorem \ref{R space}]
By applying the triangle inequality and recalling the test function \( w^{\epsilon} = u_0 + \phi_{e_i} \partial_i u_0 \) defined in (\ref{test function}), along with the Sobolev embedding theorem \cite[Theorem 9.9]{brezis2011functional}, we obtain
\begin{align}\label{r left left0}
		\Vert u^{\epsilon}-u_0\Vert_{L^{2d/(d-2)}(\mathbb{R}^d)}&=\Vert u^{\epsilon}-w^{\epsilon}+w^{\epsilon}-u_0\Vert_{L^{2d/(d-2)}(\mathbb{R}^d)}\nonumber\\
       &\le \Vert u^{\epsilon}-w^{\epsilon}\Vert_{L^{2d/(d-2)}(\mathbb{R}^d)}+\Vert \phi_{e_{i}} \partial_{i} {u}_0 \Vert_{L^{2d/(d-2)}(\mathbb{R}^d)} \nonumber\\
     &\lesssim \Vert \nabla(u^{\epsilon}-w^{\epsilon})\Vert_{L^{2}(\mathbb{R}^d)}+\Vert \phi_{e_{i}} \partial_{i} {u}_0\Vert_{L^{2d/(d-2)}(\mathbb{R}^d)}.
	\end{align}    

From equations (\ref{q1}) and (\ref{tw}), subtracting one from the other yields:
\begin{equation}\label{sub}
  -\nabla \cdot (\mathbf{a}^\epsilon \nabla (u^\epsilon - w^\epsilon)) + \mathbf{b}^\epsilon \nabla (u^\epsilon - w^\epsilon) + \Lambda (u^\epsilon - w^\epsilon) = -\nabla \cdot \mathcal{R} - r_1 - r_2.  
\end{equation}

From (\ref{sub}) and by using weak formulation and Young's inequality, we have 
 \begin{align}\label{zui3}
& \int_{\mathbb{R}^d}({u}^\epsilon- w^{\epsilon})\left(-\nabla\cdot\mathbf{a}^{\epsilon}\nabla+\mathbf{b}^{\epsilon}\nabla+\Lambda\right)  ({u}^\epsilon-w^{\epsilon})\,\mathrm{d}x\nonumber\\
=& \int_{\mathbb{R}^d}({u}^\epsilon-w^{\epsilon})(-\nabla\cdot \mathcal{R}-r_1-r_2) \,\mathrm{d}x\nonumber\\
=&\int_{\mathbb{R}^d}\left(\mathcal{R}\nabla(u^\epsilon-w^{\epsilon})-r_1(u^\epsilon-w^{\epsilon})-r_2(u^\epsilon-w^{\epsilon}) \right)\,\mathrm{d}x\nonumber\\
\le & \Vert \mathcal{R} \Vert^2_{L^{2}(\mathbb{R}^d)}+\frac{1}{4}\Vert \nabla({u}^\epsilon- w^{\epsilon}) \Vert^2_{L^{2}(\mathbb{R}^d)}+ \frac{1}{4(\Lambda-K^2-1/2)}\Vert r_1 \Vert^2_{L^{2}(\mathbb{R}^d)}\nonumber\\
&+(\Lambda-K^2-1/2)\Vert {u}^\epsilon- w^{\epsilon} \Vert^2_{L^{2}(\mathbb{R}^d)}+\left|\int_{\mathbb{R}^d} r_2
  (u^\epsilon-w^{\epsilon})\,\mathrm{d}x\right|.
\end{align}
Now we discuss the last term on the right hand side of (\ref{zui3}).
Let $\iota>0$. Recall that $
\Box_\iota:
	=\left(-\iota/2,\iota/2\right)^{d}
$ defined in (\ref{BOX}) and $r_2:=\Gamma_i\partial_{i} u_0$ defined in Lemma \ref{two-scale}. Denote $(u^\epsilon-w^{\epsilon})_\iota:=\fint_{z+\Box_\iota} (u^\epsilon-w^{\epsilon})$. We decompose the integral as \(\int_{\mathbb{R}^d} \cdot \,\mathrm{d}x = \sum\limits_{z\in \iota \mathbb{Z}^d} \int_{z+\Box_\iota} \cdot \,\mathrm{d}x\).
By applying the triangle inequality, followed by the Cauchy–Schwarz inequality and the Poincaré inequality, we obtain
\begin{align}\label{pi}
   &  \left| \int_{\mathbb{R}^d} r_2
  (u^\epsilon-w^{\epsilon})\,\mathrm{d}x \right| \nonumber\\
  \le &\left| \sum\limits_{z\in \iota \mathbb{Z}^d} \int_{z+\Box_\iota} \Gamma_i\partial_{i} u_0
  ((u^\epsilon-w^{\epsilon})-(u^\epsilon-w^{\epsilon})_\iota) \,\mathrm{d}x \right| + \left|\sum\limits_{z\in \iota \mathbb{Z}^d} \int_{z+\Box_\iota} \Gamma_i\partial_{i} u_0
  (u^\epsilon-w^{\epsilon})_\iota \,\mathrm{d}x \right| \nonumber\\
  \leq & \left| \sum\limits_{z\in \iota \mathbb{Z}^d} \Vert \Gamma_i\partial_{i} u_0 \Vert_{L^{2}(z+\Box_\iota)}\Vert (u^\epsilon-w^{\epsilon})-(u^\epsilon-w^{\epsilon})_\iota \Vert_{L^{2}(z+\Box_\iota)}  \right| +\left| \sum\limits_{z\in \iota \mathbb{Z}^d} (u^\epsilon-w^{\epsilon})_\iota\int_{z+\Box_\iota} \Gamma_i\partial_{i} u_0 \,\mathrm{d}x \right|
 \nonumber\\ 
  \leq & \left| \sum\limits_{z\in \iota \mathbb{Z}^d} \Vert \Gamma_i\partial_{i} u_0 \Vert_{L^{2}(z+\Box_\iota)} \mu(1) \iota \Vert\nabla(u^\epsilon-w^{\epsilon}) \Vert_{L^{2}(z+\Box_\iota)}  \right| +\left| \sum\limits_{z\in \iota \mathbb{Z}^d}(u^\epsilon-w^{\epsilon})_\iota\int_{z+\Box_\iota} \Gamma_i\partial_{i} u_0 \,\mathrm{d}x  \right|\nonumber\\ 
   := & T_1+T_2,
\end{align}
where $ \mu(1)$ is a constant defined in Lemma \ref{poincare lemma} and the last inequality follows from Lemma \ref{poincare lemma}.
By using Young's inequality, we have
\begin{align}\label{pi term 1}
  T_1
  &\leq  \sum\limits_{z\in \iota \mathbb{Z}^d} \Vert \Gamma_i\partial_{i} u_0 \Vert^2_{L^{2}(z+\Box_\iota)}(\mu(1) \iota)^2 +\sum\limits_{z\in \iota \mathbb{Z}^d}\frac{1}{4}
  \Vert \nabla({u}^\epsilon- w^{\epsilon}) \Vert^2_{L^{2}(z+\Box_\iota)}\nonumber\\
  &=\Vert \Gamma_i\partial_{i} u_0 \Vert^2_{L^{2}(\mathbb{R}^d)}(\mu(1) \iota)^2 +\frac{1}{4}
  \Vert \nabla({u}^\epsilon- w^{\epsilon}) \Vert^2_{L^{2}(\mathbb{R}^d)}.
\end{align}
By using the Cauchy–Schwarz inequality and Young's inequality, we have
\begin{align}\label{pi term2}
     T_2=&\left| \sum\limits_{z\in \iota \mathbb{Z}^d} \left(\fint_{z+\Box_\iota} (u^\epsilon-w^{\epsilon}) \,\mathrm{d}x \int_{z+\Box_\iota} \Gamma_i\partial_{i} u_0 \,\mathrm{d}x
  \right)\right| \nonumber\\
  =&\left| \sum\limits_{z\in \iota \mathbb{Z}^d} \left(\int_{z+\Box_\iota} (u^\epsilon-w^{\epsilon})\iota^{-d} \,\mathrm{d}x \int_{z+\Box_\iota} \Gamma_i\partial_{i} u_0 \,\mathrm{d}x
  \right)\right| \nonumber\\
  \leq & \left(\sum\limits_{z\in \iota \mathbb{Z}^d}\iota^{-d}\left(\int_{z+\Box_\iota} (u^\epsilon-w^{\epsilon})\,\mathrm{d}x \right)^2\right)^{1/2}\left(\sum\limits_{z\in \iota \mathbb{Z}^d}\iota^{-d} \left(\int_{z+\Box_\iota} \Gamma_i\partial_{i} u_0 \,\mathrm{d}x \right)^2\right)^{1/2}\nonumber\\
  \leq &\left(\sum\limits_{z\in \iota \mathbb{Z}^d}\iota^{-d} \iota^{d} \Vert {u}^\epsilon- w^{\epsilon} \Vert^2_{L^{2}(z+\Box_\iota)} \right)^{1/2} \left(\sum\limits_{z\in \iota \mathbb{Z}^d}\iota^{-d} \left(\int_{z+\Box_\iota} \Gamma_i\partial_{i} u_0 \,\mathrm{d}x \right)^2\right)^{1/2}\nonumber\\
  \leq & \frac{1}{2}\Vert {u}^\epsilon- w^{\epsilon} \Vert^2_{L^{2}(\mathbb{R}^d)}+\frac{1}{2} \sum\limits_{z\in \iota \mathbb{Z}^d}\iota^{-d} \left(\int_{z+\Box_\iota} \Gamma_i\partial_{i} u_0 \,\mathrm{d}x \right)^2.
\end{align}
From (\ref{zui3})-(\ref{pi term2}), we have
 \begin{align}\label{78}
& \int_{\mathbb{R}^d}({u}^\epsilon- w^{\epsilon})\left(-\nabla\cdot\mathbf{a}^{\epsilon}\nabla+\mathbf{b}^{\epsilon}\nabla+\Lambda\right)  ({u}^\epsilon-w^{\epsilon})\,\mathrm{d}x\nonumber\\
\le &\Vert \mathcal{R} \Vert^2_{L^{2}(\mathbb{R}^d)}+\frac{1}{2}\Vert \nabla({u}^\epsilon- w^{\epsilon}) \Vert^2_{L^{2}(\mathbb{R}^d)}+ \frac{1}{4(\Lambda-K^2-1/2)}\Vert r_1 \Vert^2_{L^{2}(\mathbb{R}^d)}+(\Lambda-K^2)\Vert {u}^\epsilon- w^{\epsilon} \Vert^2_{L^{2}(\mathbb{R}^d)}\nonumber\\
&+ \Vert \Gamma_i\partial_{i} u_0 \Vert^2_{L^{2}(\mathbb{R}^d)}(\mu(1) \iota)^2 +\frac{1}{2} \sum\limits_{z\in \iota \mathbb{Z}^d}\iota^{-d} \left(\int_{z+\Box_\iota} \Gamma_i\partial_{i} u_0 \,\mathrm{d}x \right)^2.
\end{align}
By applying the Cauchy–Schwarz inequality and Young's inequality, and using the uniform ellipticity assumption \textbf{A1} and the boundedness assumption \textbf{A4}, while noting that \(\Lambda \geq K^2 + 1\), we obtain
  \begin{align}\label{Lam}
&\int_{\mathbb{R}^d} (u^\epsilon - w^\epsilon) \left(-\nabla\cdot\mathbf{a}^{\epsilon}\nabla + \mathbf{b}^{\epsilon}\nabla + \Lambda\right) (u^\epsilon - w^\epsilon) \,\mathrm{d}x\nonumber\\
 =& \int_{\mathbb{R}^d} \nabla (u^\epsilon - w^\epsilon) \cdot \mathbf{a}^{\epsilon}\nabla (u^\epsilon - w^\epsilon)\,\mathrm{d}x + \int_{\mathbb{R}^d} \mathbf{b}^{\epsilon}\nabla (u^\epsilon - w^\epsilon) \cdot (u^\epsilon - w^\epsilon) + \Lambda (u^\epsilon - w^\epsilon)^2  \,\mathrm{d}x \nonumber \\ 
  \geq& \Vert \nabla (u^\epsilon - w^\epsilon) \Vert^{2}_{L^{2}(\mathbb{R}^d)} - \left|\int_{\mathbb{R}^d} \mathbf{b}^{\epsilon}\nabla (u^\epsilon - w^\epsilon) \cdot (u^\epsilon - w^\epsilon) \,\mathrm{d}x\right| + \Lambda \Vert (u^\epsilon - w^\epsilon) \Vert^{2}_{L^{2}(\mathbb{R}^d)} \nonumber \\ 
  \geq& \Vert \nabla (u^\epsilon - w^\epsilon) \Vert^{2}_{L^{2}(\mathbb{R}^d)} - K \Vert \nabla (u^\epsilon - w^\epsilon) \Vert_{L^{2}(\mathbb{R}^d)} \Vert (u^\epsilon - w^\epsilon) \Vert_{L^{2}(\mathbb{R}^d)}+ \Lambda \Vert (u^\epsilon - w^\epsilon) \Vert^{2}_{L^{2}(\mathbb{R}^d)} \nonumber\\
  \geq& \Vert \nabla (u^\epsilon - w^\epsilon) \Vert^{2}_{L^{2}(\mathbb{R}^d)} - \frac{1}{4} \Vert \nabla (u^\epsilon - w^\epsilon) \Vert^{2}_{L^{2}(\mathbb{R}^d)} + (\Lambda - K^2) \Vert (u^\epsilon - w^\epsilon) \Vert^{2}_{L^{2}(\mathbb{R}^d)} \nonumber\\
  \geq& \frac{3}{4} \Vert \nabla (u^\epsilon - w^\epsilon) \Vert^{2}_{L^{2}(\mathbb{R}^d)} + (\Lambda - K^2) \Vert (u^\epsilon - w^\epsilon) \Vert^{2}_{L^{2}(\mathbb{R}^d)}.
\end{align}
Combining (\ref{78}) with (\ref{Lam}), we have
\begin{align}\label{r ll}
     \Vert \nabla({u}^\epsilon- w^{\epsilon}) \Vert^2_{L^{2}(\mathbb{R}^d)} 
  \lesssim \Vert \mathcal{R} \Vert^2_{L^{2}(\mathbb{R}^d)}+ \Vert r_1 \Vert^2_{L^{2}(\mathbb{R}^d)}+\Vert \Gamma_i\partial_{i} u_0 \Vert^2_{L^{2}(\mathbb{R}^d)} \iota^2+\sum\limits_{z\in \iota \mathbb{Z}^d}\iota^{d} \left(\fint_{z+\Box_\iota} \Gamma_i\partial_{i} u_0 \,\mathrm{d}x \right)^2.
\end{align}
Now combining (\ref{r contral}), Lemma \ref{pi test l2}, Lemma \ref{pi test} with (\ref{r ll}), we have
\begin{align}\label{r lll}
   & \Vert \nabla({u}^\epsilon- w^{\epsilon}) \Vert^2_{L^{2}(\mathbb{R}^d)} \nonumber\\
   \lesssim & \mathcal{C}(\omega_\epsilon)\epsilon^2 \Vert \nabla u_0 \Vert_{H^1(\mathbb{R}^d)}^2+\mathcal{C}(\omega_\epsilon) \Vert \nabla u_0 \Vert^2_{L^2(\mathbb{R}^d)}\iota^2
   +\sum\limits_{z\in \iota \mathbb{Z}^d}\iota^{d}\mathcal{C}(\omega_\epsilon) \epsilon^d \iota^{-2d} \Vert \nabla u_0 \Vert_{L^2(z+\Box_\iota)}^2\nonumber\\
   \lesssim & \mathcal{C}(\omega_\epsilon)\epsilon^2 \Vert \nabla u_0 \Vert_{H^1(\mathbb{R}^d)}^2+\mathcal{C}(\omega_\epsilon) \Vert \nabla u_0 \Vert^2_{L^2(\mathbb{R}^d)}\iota^2 +\mathcal{C}(\omega_\epsilon) \epsilon^d \iota^{-d}\Vert \nabla u_0 \Vert_{L^2(\mathbb{R}^d)}^2\nonumber\\
   \lesssim &\mathcal{C}(\omega_\epsilon) (\epsilon+\iota+\epsilon^{d/2} \iota^{-d/2})^2 \Vert \nabla u_0 \Vert^2_{H^1(\mathbb{R}^d)}.
\end{align}

Combining (\ref{r left left0}) and (\ref{r lll}) with Lemma \ref{phi sigma}, we obtain
\begin{align}\label{last}
		 \Vert u^{\epsilon}-u_0\Vert_{L^{2d/(d-2)}(\mathbb{R}^d)}
  \lesssim & \mathcal{C}(\omega_\epsilon) (\epsilon+\iota+\epsilon^{d/2} \iota^{-d/2})\Vert \nabla u_0 \Vert_{H^1(\mathbb{R}^d)}+\Vert  \phi_{e_{i}} \partial_{i} {u}_0\Vert_{L^{2d/(d-2)}(\mathbb{R}^d)}\nonumber\\
  \lesssim & \mathcal{C}(\omega_\epsilon) (\epsilon+\iota+\epsilon^{d/2} \iota^{-d/2})\Vert \nabla u_0 \Vert_{H^1(\mathbb{R}^d)}+\mathcal{C}(\omega_\epsilon)\epsilon \Vert \nabla u_0 \Vert_{L^{2d/(d-2)}(\mathbb{R}^d)}.
	\end{align} 
Given that \(\iota \geq \epsilon\), the dominant contributions arise from the terms \(\iota\) and \(\epsilon^{d/2} \iota^{-d/2}\). To minimize the expression \(\epsilon + \iota + \epsilon^{d/2} \iota^{-d/2}\), it is essential to balance the terms \(\iota\) and \(\epsilon^{d/2} \iota^{-d/2}\), which means $\iota = \epsilon^{d/2} \iota^{-d/2}.$ Then, we get $\iota = \epsilon^{d/d+2}$. Because \( d/d+2 < 1\) and \(\epsilon < 1\), we obtain from (\ref{last}) that
\begin{align}\label{sob}
\|u^\epsilon - u_0\|_{L^{2d/(d-2)}(\mathbb{R}^d)} \lesssim \mathcal{C}(\omega_\epsilon) \epsilon^{\frac{d}{d+2}} \|\nabla u_0\|_{H^1(\mathbb{R}^d)}+\mathcal{C}(\omega_\epsilon)\epsilon \Vert \nabla u_0 \Vert_{L^{2d/(d-2)}(\mathbb{R}^d)}.
\end{align}
Given that \(\Vert \nabla u_0 \Vert_{L^{2d/(d-2)}(\mathbb{R}^d)} \leq C \Vert \nabla u_0 \Vert_{H^1(\mathbb{R}^d)}\) by the Sobolev embedding theorem, we can bound the second term on the right hand side of (\ref{sob}). Then, we have
\begin{equation*}  
\|u^\epsilon - u_0\|_{L^{2d/(d-2)}(\mathbb{R}^d)} \lesssim \mathcal{C}(\omega_\epsilon) \left( \epsilon^{\frac{d}{d+2}} + \epsilon \right) \|\nabla u_0\|_{H^1(\mathbb{R}^d)}.
\end{equation*}
Since \( \epsilon^{\frac{d}{d+2}} > \epsilon \) for \( d \geq 3 \) and \(\epsilon < 1\), the dominant term is \( \epsilon^{\frac{d}{d+2}} \), which allows us to further simplify the expression to
\begin{equation*}
\|u^\epsilon - u_0\|_{L^{2d/(d-2)}(\mathbb{R}^d)} \lesssim \mathcal{C}(\omega_\epsilon) \epsilon^{\frac{d}{d+2}} \|\nabla u_0\|_{H^1(\mathbb{R}^d)}.
\end{equation*}
This completes the proof of (\ref{quali result}).
\end{proof}

For a bounded domain, the proof follows a similar approach to that for the entire space. However, there are significant differences in the treatment of test functions between the two cases. Specifically, for a bounded domain, a smooth cutoff function is introduced to mitigate the effects near the boundary, ensuring that the test functions exhibit appropriate behavior. This allows all integrals to remain well-defined, particularly near the boundary. Furthermore, to handle boundary contributions, we employ the following trace-type estimate \cite[(79)]{Fischer2021}:
\begin{equation}\label{trace type}
    \int_{(\partial \mathcal{O})_\tau} |\nabla v|^2 \, \mathrm{d}x \lesssim \tau \|\nabla v\|^2_{H^1(\mathcal{O})},
\end{equation}
for any \(v \in H^1(\mathcal{O})\) and \(\tau \ge \epsilon\). 

\begin{proof}[Proof of Theorem \ref{qualitative result}]
From (\ref{test function}), and by applying the triangle inequality and Poincar\'e inequality, we have
\begin{align}\label{r left left}
		\Vert u^{\epsilon}-u_0\Vert_{L^{2}(\mathcal{O})}&=\Vert u^{\epsilon}-w^{\epsilon}+w^{\epsilon}-u_0\Vert_{L^{2}(\mathcal{O})}\nonumber\\
     &\le \Vert u^{\epsilon}-w^{\epsilon}\Vert_{L^{2}(\mathcal{O})}+\Vert  \phi_{e_{i}} \partial_{i} {u}_0\Vert_{L^{2}(\mathcal{O})} \nonumber\\
     &\lesssim \Vert \nabla(u^{\epsilon}-w^{\epsilon})\Vert_{L^{2}(\mathcal{O})}+\Vert \phi_{e_{i}} \partial_{i} {u}_0\Vert_{L^{2}(\mathcal{O})}.
	\end{align}    

Set the function $f$ as follows,
\begin{equation}\label{eta0}
	f=(1-\eta_{r})(\partial_{i} {u}_0)\phi_{e_{i}},
\end{equation}
where $ \eta_{r} $ is a smooth cutoff function that satisfies
	\begin{equation}\label{eta}
		\left\{
		\begin{aligned}
			&  0<\eta_{r} <1 \quad  \mathrm{in}\ (\partial\mathcal{O})_\tau,\\
			& \eta_{r} \equiv1  \quad \mathrm{in} \ \mathcal{O}\setminus(\partial\mathcal{O})_\tau,\\
			& \eta_{r} \equiv0\quad  \mathrm{on}\ \partial\mathcal{O},\\
	        & \left|\nabla\eta_{r} \right|\lesssim \tau^{-1},
		\end{aligned}
		\right.
	\end{equation}
where $\tau\geq\epsilon $ and $ (\partial\mathcal{O})_\tau:=\{x\in\mathcal{O}:\mathrm{dist}(x,\partial \mathcal{O})<\tau\} $.
Now let $u={u}^\epsilon- w^{\epsilon}$. Similar to (\ref{Lam}), for $ (u+f)\in H_0^1(\mathcal{O})$, we have
  \begin{align}\label{new Lam}
&\frac{3}{4} \Vert \nabla u\Vert^{2}_{L^{2}(\mathcal{O})}+(\Lambda-K^2) \Vert u \Vert^2_{L^{2}(\mathcal{O})} \nonumber \\ 
\leq& \int_{\mathcal{O}} (u+f) \left(-\nabla\cdot\mathbf{a}^{\epsilon}\nabla+\mathbf{b}^{\epsilon}\nabla+\Lambda\right)u\,\mathrm{d}x - \int_{\mathcal{O}} \nabla f\cdot\mathbf{a}^{\epsilon}\nabla u+ f \mathbf{b}^{\epsilon}\nabla u+f \Lambda u \, \mathrm{d}x\nonumber \\ 
\leq& \left|\int_{\mathcal{O}} (u+f) \left(-\nabla\cdot\mathbf{a}^{\epsilon}\nabla+\mathbf{b}^{\epsilon}\nabla+\Lambda\right)u\,\mathrm{d}x \right|+ \left|\int_{\mathcal{O}} \nabla f\cdot\mathbf{a}^{\epsilon}\nabla u+ f \mathbf{b}^{\epsilon}\nabla u+f \Lambda u \, \mathrm{d}x\right|\nonumber \\ 
:= & T_1+T_2.
	\end{align}
 Applying Young's inequality, we have
 \begin{align*}
T_2
\le &\int_{\mathcal{O}}  \lambda |\nabla f| |\nabla u|+K| f||\nabla u|+\Lambda |fu|  \, \mathrm{d}x \nonumber\\
\le & 4\lambda^2 \|\nabla f\|^2_{L^{2}(\mathcal{O})}+\frac{1}{8} \|\nabla u\|^2_{L^{2}(\mathcal{O})}+4K^2\| f\|^2_{L^{2}(\mathcal{O})}+\Lambda^2 \|f\|^2_{L^{2}(\mathcal{O})}+\frac{1}{4}\|u\|^2_{L^{2}(\mathcal{O})}.
\end{align*}
Similar to (\ref{zui3}), form (\ref{q1}) and (\ref{tw}), by using weak formulation and Young's inequality, we have
\begin{align}\label{fff}
T_1
\le & \left|\int_{\mathcal{O}} (u+f)(-\nabla\cdot \mathcal{R} - r_1 - r_2)\,\mathrm{d}x  \right| \nonumber\\
= & \left|\int_{\mathcal{O}} \mathcal{R}\nabla(u+f) - r_1(u+f) - r_2(u+f) \,\mathrm{d}x  \right| \nonumber\\
\le & \Vert \mathcal{R} \Vert^2_{L^{2}(\mathcal{O})} + \frac{1}{4} \Vert \nabla(u+f) \Vert^2_{L^{2}(\mathcal{O})} + \frac{1}{4(\Lambda-K^2-1/2)}\Vert r_1 \Vert^2_{L^{2}(\mathcal{O})} \nonumber\\
&+(\Lambda-K^2-1/2)\Vert u+f \Vert^2_{L^{2}(\mathcal{O})} + \left|\int_{\mathcal{O}} r_2(u+f) \,\mathrm{d}x \right|.
\end{align}
From (\ref{new Lam})-(\ref{fff}), we have
  \begin{align}\label{zui3 bdd}
&\frac{3}{4} \Vert \nabla u \Vert^2_{L^{2}(\mathcal{O})}+(\Lambda-K^2) \Vert u \Vert^2_{L^{2}(\mathcal{O})}\nonumber\\
\le & \Vert \mathcal{R} \Vert^2_{L^{2}(\mathcal{O})}+\frac{3}{8}\Vert \nabla u \Vert^2_{L^{2}(\mathcal{O})}+(\frac{1}{4}+4\lambda^2) \|\nabla f\|^2_{L^{2}(\mathcal{O})}+ \frac{1}{4(\Lambda-K^2-1/2)}\Vert r_1 \Vert^2_{L^{2}(\mathcal{O})}\nonumber\\
&+(\Lambda-K^2-1/4)\Vert u \Vert^2_{L^{2}(\mathcal{O})}+(\Lambda+3K^2+\Lambda^2-1/2)\Vert f \Vert^2_{L^{2}(\mathcal{O})}+\left |\int_{\mathcal{O}} r_2
  (u+f)\, \mathrm{d}x \right|.
\end{align}
As in the proof of Theorem \ref{R space}, we now address the last term on the right-hand side of (\ref{zui3 bdd}). Similarly, here we denote $(u+f)_\iota:=\fint_{z+\Box_\iota} (u+f)$. For the bounded domain \(\mathcal{O}\), the integral is decomposed as \(\int_{\mathcal{O}} \cdot \,\mathrm{d}x = \sum\limits_{z\in \iota \mathbb{Z}^d} \int_{z+\Box_\iota} \cdot \,\mathrm{d}x\), where the sum only includes those \(z\) for which \(z+\Box_\iota\) is contained within \(\mathcal{O}\). Similar to (\ref{pi}), by applying the Cauchy–Schwarz inequality and the Poincar\'e inequality, we have
\begin{align*}
    \left |\int_{\mathcal{O}} r_2
  (u+f)\, \mathrm{d}x \right|
  = &\left |\sum\limits_{z\in \iota \mathbb{Z}^d} \int_{z+\Box_\iota} \Gamma_i\partial_{i} u_0
  \left((u+f)-(u+f)_\iota\right) \,\mathrm{d}x + \sum\limits_{z\in \iota \mathbb{Z}^d} \int_{z+\Box_\iota} \Gamma_i\partial_{i} u_0
  (u+f)_\iota \,\mathrm{d}x \right|\nonumber\\
  \leq & \left| \sum\limits_{z\in \iota \mathbb{Z}^d} \left(\Vert \Gamma_i\partial_{i} u_0 \Vert_{L^{2}(z+\Box_\iota)} \iota \Vert \nabla(u+f) \Vert_{L^{2}(z+\Box_\iota)} + (u+f)_\iota \int_{z+\Box_\iota} \Gamma_i\partial_{i} u_0 \,\mathrm{d}x \right)\right|\nonumber\\
  \leq & \left| \sum\limits_{z\in \iota \mathbb{Z}^d} \Vert \Gamma_i\partial_{i} u_0 \Vert_{L^{2}(z+\Box_\iota)} \iota \Vert \nabla(u+f) \Vert_{L^{2}(z+\Box_\iota)} \right|+  \left| \sum\limits_{z\in \iota \mathbb{Z}^d} (u+f)_\iota \int_{z+\Box_\iota} \Gamma_i\partial_{i} u_0 \,\mathrm{d}x \right|\nonumber\\
  := & I_1+I_2.
\end{align*}
Similar to (\ref{pi term 1}), by applying Young's inequality, we have
\begin{align*}
     I_1
  \leq  \sum\limits_{z\in \iota \mathbb{Z}^d} \left(\Vert \Gamma_i\partial_{i} u_0 \Vert^2_{L^{2}(z+\Box_\iota)} \iota^2\right)+\frac{1}{4}
  \Vert \nabla(u+f) \Vert^2_{L^{2}(\mathcal{O})}.
\end{align*}
Similar to (\ref{pi term2}), by using the Cauchy–Schwarz inequality and Young's inequality, we have
\begin{align}\label{pi term2 bdd}
     I_2
  \leq  \frac{1}{4}\Vert u+f \Vert^2_{L^{2}(\mathcal{O})} + \sum\limits_{z\in \iota \mathbb{Z}^d} \iota^{-d} \left(\int_{z+\Box_\iota} \Gamma_i \partial_i u_0 \,\mathrm{d}x \right)^2.
\end{align}
From (\ref{zui3 bdd})-(\ref{pi term2 bdd}) and recalling that $u={u}^\epsilon- w^{\epsilon}$, we have
\begin{align}\label{r ll bdd}
     &\Vert \nabla({u}^\epsilon - w^{\epsilon}) \Vert^2_{L^{2}(\mathcal{O})} \nonumber\\
  \lesssim & \left(\Vert \mathcal{R} \Vert^2_{L^{2}(\mathcal{O})} + \Vert r_1 \Vert^2_{L^{2}(\mathcal{O})} + \sum\limits_{z\in \iota \mathbb{Z}^d} \Vert \Gamma_i \partial_i u_0 \Vert^2_{L^{2}(z+\Box_\iota)} \iota^2 + \sum\limits_{z\in \iota \mathbb{Z}^d} \iota^{-d} \left(\int_{z+\Box_\iota} \Gamma_i \partial_i u_0 \,\mathrm{d}x \right)^2 \right)\nonumber\\
 &+ \left(\|\nabla f \|^2_{L^{2}(\mathcal{O})} + \| f \|^2_{L^{2}(\mathcal{O})}\right) \nonumber\\
  := & E_1+E_2.
\end{align}
From Lemma \ref{pi test l2}-\ref{two-scale}, we have
\begin{align*}
E_1=& \Vert \mathcal{R} \Vert^2_{L^{2}(\mathcal{O})} + \Vert r_1 \Vert^2_{L^{2}(\mathcal{O})} + \Vert \Gamma_i \partial_i u_0 \Vert^2_{L^{2}(\mathcal{O})} \iota^2 + \sum\limits_{z\in \iota \mathbb{Z}^d} \iota^{d} \left(\fint_{z+\Box_\iota} \Gamma_i \partial_i u_0 \,\mathrm{d}x \right)^2 \nonumber\\
\lesssim & \mathcal{C}(\omega_\epsilon)\epsilon^2 \Vert \nabla u_0 \Vert_{H^1(\mathcal{O})}^2+\mathcal{C}(\omega_\epsilon) \Vert \nabla u_0 \Vert^2_{L^2(\mathcal{O})}\iota^2+\sum\limits_{z\in \iota \mathbb{Z}^d}\iota^{d}\mathcal{C}(\omega_\epsilon) \epsilon^d \iota^{-2d} \Vert \nabla u_0 \Vert_{L^2(z+\Box_\iota)}^2\nonumber\\
\lesssim & \mathcal{C}(\omega_\epsilon)\left(\epsilon^2 +\iota^2+ \epsilon^d \iota^{-d} \right)\Vert \nabla u_0 \Vert_{H^1(\mathcal{O})}^2.
\end{align*}
From (\ref{eta0}) and (\ref{eta}), and by applying Lemma \ref{phi sigma}, Lemma \ref{pi test l2}, and the trace-type estimate (\ref{trace type}), we obtain
\begin{align}\label{3.65}
      E_2\lesssim&\|(\nabla \eta_r)  \phi_{e_{i}} \partial_i {u}_0 \|^2_{L^{2}((\partial \mathcal{O})_\tau)}+ \|\nabla (   \phi_{e_{i}} \partial_i{u}_0 ) \|^2_{L^{2}((\partial \mathcal{O})_\tau)}+ \| \phi_{e_{i}} \partial_i {u}_0  \|^2_{L^{2}((\partial \mathcal{O})_\tau)}\nonumber\\
    \lesssim & \left(\frac{1}{\tau^2}+1\right)\| \phi_{e_{i}} \partial_i {u}_0  \|^2_{L^{2}((\partial \mathcal{O})_\tau)} + \|\phi_{e_{i}} \nabla (\partial_i {u}_0)   \|^2_{L^{2}((\partial \mathcal{O})_\tau)} + \|\partial_i {u}_0  \nabla \phi_{e_{i}} \|^2_{L^{2}((\partial \mathcal{O})_\tau)}
 \nonumber\\
    \lesssim &\left(\frac{1}{\tau^2}+1\right)\mathcal{C}(\omega_\epsilon)\epsilon^2 \Vert \nabla u_0 \Vert^2_{L^{2}((\partial \mathcal{O})_\tau)} + \mathcal{C}(\omega_\epsilon)\epsilon^2 \Vert \nabla u_0 \Vert_{H^1(\mathcal{O})} ^2+ \mathcal{C} (\omega_\epsilon) \Vert \nabla u_0 \Vert^2_{L^{2}((\partial \mathcal{O})_\tau)} \nonumber\\
    \lesssim & \mathcal{C}(\omega_\epsilon) \left(\frac{\epsilon^2}{\tau}+\tau\epsilon^2+\epsilon^2+\tau\right) \Vert \nabla u_0 \Vert_{H^1(\mathcal{O})} ^2.
\end{align}
Now, from (\ref{r ll bdd})-(\ref{3.65}), we obtain
\begin{align}\label{r lll2}
     \Vert \nabla({u}^\epsilon- w^{\epsilon}) \Vert^2_{L^{2}(\mathcal{O})} 
   \lesssim  \mathcal{C}(\omega_\epsilon)\left(\epsilon^2 +\iota^2+ \epsilon^d \iota^{-d}+\frac{\epsilon^2}{\tau}+\tau\epsilon^2+\tau \right)\Vert \nabla u_0 \Vert_{H^1(\mathcal{O})}^2.
\end{align}

By combining (\ref{r lll2}) and (\ref{r left left}) with Lemma \ref{phi sigma}, we obtain
\begin{align}\label{last2}
		& \Vert u^{\epsilon}-u_0\Vert_{L^{2}(\mathcal{O})}\nonumber\\
  \lesssim & \mathcal{C}(\omega_\epsilon) \left(\epsilon +\iota+ \epsilon^{d/2} \iota^{-d/2}+\epsilon\tau^{-1/2}+\tau^{1/2}\epsilon+\tau^{1/2} \right)\Vert \nabla u_0 \Vert_{H^1(\mathcal{O})}+\Vert  \phi_{e_{i}} \partial_{i} {u}_0\Vert_{L^{2}(\mathcal{O})}\nonumber\\
  \lesssim & \mathcal{C}(\omega_\epsilon) \left(\epsilon +\iota+ \epsilon^{d/2} \iota^{-d/2}+\epsilon\tau^{-1/2}+\tau^{1/2}\epsilon+\tau^{1/2} \right)\Vert \nabla u_0 \Vert_{H^1(\mathcal{O})}.
	\end{align} 
We follow the same approach to minimize the expression \(\epsilon +\iota+ \epsilon^{d/2} \iota^{-d/2}+\epsilon\tau^{-1/2}+\tau^{1/2}\epsilon+\tau^{1/2}\) as in the whole space. We assume \(\iota = \epsilon^\alpha\) and choose \(\alpha\) such that \(\iota\) and \(\epsilon^{d/2} \iota^{-d/2}\) are balanced. At the same time, we choose \(\tau = \epsilon^\beta\) and select \(\beta\) such that \(\epsilon\tau^{-1/2}\) and \(\tau^{1/2}\) are balanced. By taking \(\iota = \epsilon^{\frac{d}{d+2}}\) and \(\tau = \epsilon\), and noting $d\ge 3$, we obtain from (\ref{last2}) that
\begin{align*}
		\Vert u^{\epsilon}-u_0\Vert_{L^{2}(\mathcal{O})}\lesssim  \mathcal{C}(\omega_\epsilon) \epsilon^{\frac{1}{2}}\Vert \nabla u_0 \Vert_{H^1(\mathcal{O})}.
	\end{align*}  
 This completes the proof of (\ref{bounded result}).
\end{proof}


\section{Examples}\label{sec 5}
Before presenting our own example to illustrate the spectral gap condition, we refer to a similar example discussed in \cite{Fischer2021}. In that work, the authors consider a stationary Gaussian random field with a rapidly decaying covariance function that satisfies the spectral gap condition. Specifically, they demonstrate that for any stationary Gaussian random field with an integrable covariance function, the spectral gap condition can be satisfied, ensuring controlled local dependencies. Building upon this, we consider a Gaussian random field \(\omega_\epsilon(x):\) \(\mathbb{R}^d\to H\) with the following covariance function:
\begin{equation}\label{short}
\mathrm{Cov}(\omega_\epsilon(x), \omega_\epsilon(y))  = \exp\left(-\frac{|x - y|^2}{2\epsilon^2}\right).   
\end{equation}
Here, \(\omega_\epsilon(x)\) is a stationary Gaussian random field with a correlation length \(\epsilon \in (0, 1]\). The covariance function decays rapidly as the distance \(|x - y|\) increases, which ensures that the random field satisfies the spectral gap condition. Specifically, this rapid decay in the covariance function implies that the correlations between the values of the random field at different points diminish quickly. This rapid decorrelation is essential for the spectral gap condition. According to \cite[Theorem 3.1]{Duerinckx}, such a Gaussian random field satisfies the spectral gap condition. The theorem states that for a stationary Gaussian field with an integrable covariance function, the variance of any measurable random variable can be controlled by its local dependence on \(\omega_\epsilon\).

To further illustrate the spectral gap condition in the context of Gaussian random fields, we compare two types of covariance structures: short-range and long-range correlations. The left side of the Figure \ref{fig:covariance_comparison} shows the previously mentioned Gaussian random field \(\omega_\epsilon(x)\) with the covariance function (\ref{short}), which exemplifies short-range correlations. On the right side, we present a Gaussian random field with long-range correlations, characterized by a different covariance function:
\[
\mathrm{Cov}(\omega_\epsilon(x), \omega_\epsilon(y)) = \frac{1}{\left(1 + \frac{|x - y|}{\epsilon}\right)^{1/2}}.
\]
In this case, the correlation between \(\omega_\epsilon(x)\) and \(\omega_\epsilon(y)\) decays more slowly, indicating stronger dependencies across larger distances.
\begin{figure}[H]
\centering
\includegraphics[width=0.42\textwidth]{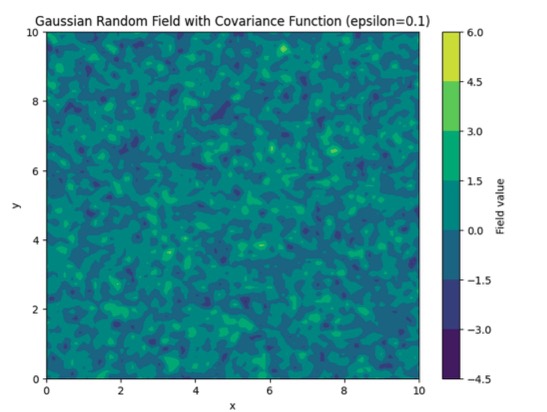}
\includegraphics[width=0.38\textwidth]{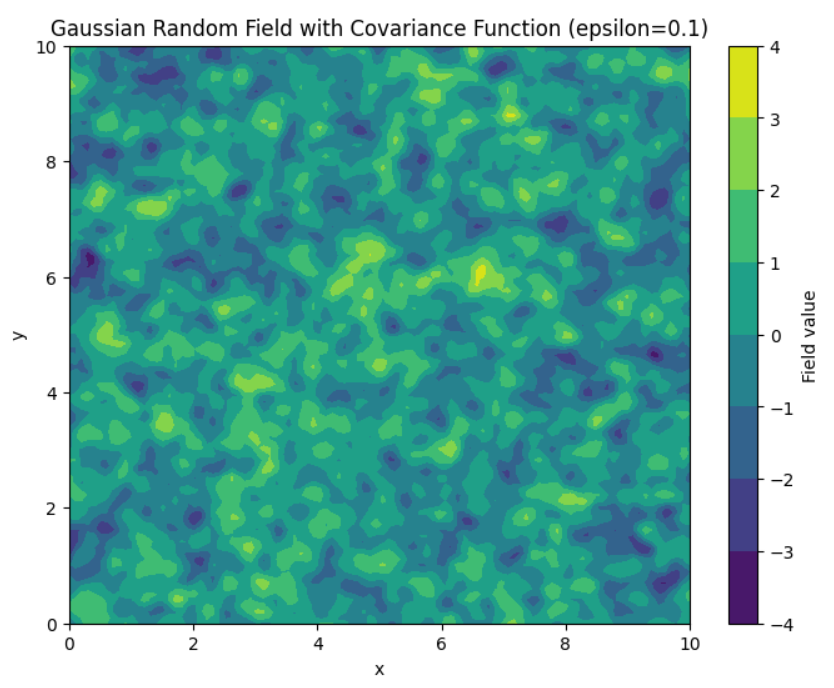}
\caption{Comparison of Gaussian random fields with short-range (left) and long-range (right) correlations. The covariance functions are indicated above each field.}
\label{fig:covariance_comparison}
\end{figure}

To gain further insights into how the parameter \(\epsilon\) influences the correlation structure of Gaussian random fields, we visualize two instances with different values of \(\epsilon\) in Figure \ref{fig:comparison}. This comparison highlights the differences in correlation lengths and their implications for the spectral gap condition. Figure \ref{fig:comparison} illustrates the effect of different correlation lengths on Gaussian random fields. The left panel corresponds to \(\epsilon = 0.1\), where the field exhibits short-range correlations, leading to rapid changes in field values over small distances. In contrast, the right panel corresponds to \(\epsilon = 1.0\), which induces longer-range correlations, resulting in smoother variations across the field. 

\begin{figure}[H]
\centering
\includegraphics[width=0.42\textwidth]{0.1.jpg}
\includegraphics[width=0.4\textwidth]{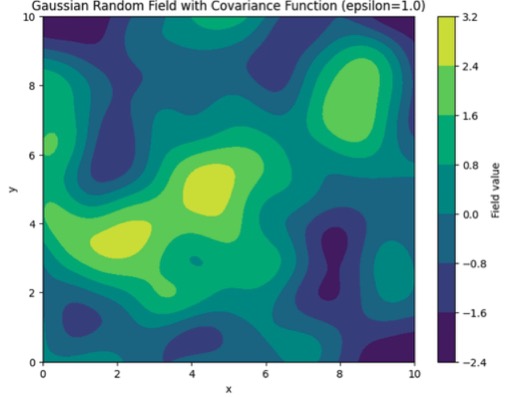}
\caption{Comparison of Gaussian Random Fields with Different Correlation Lengths.}
 \label{fig:comparison}
\end{figure}

\section*{Acknowledgment}
This work was supported by the WISE program (MEXT) at Kyushu University. I would like to extend my sincerest gratitude to Professor Osada Hirofumi for his invaluable guidance and revisions throughout the development of this paper. I am also deeply grateful to Professor Stefan Neukamm for the fruitful discussions and for providing numerous ideas that helped to address key challenges in this research during my visit to TU Dresden.

                                                    
\end{document}